\theoremstyle{change}
\newcommand{\Q}{{\mathbb Q}}
\newcommand{\Z}{{\mathbb Z}}
\newcommand{\p}{\mathfrak p}
\newcommand{\OF}{{\mathfrak o}}
\newcommand{\Fq}{{\mathbb F}_q}
\newcommand{\GL}{{\rm GL}}
\newcommand{\SL}{{\rm SL}}
\newcommand{\GSp}{{\rm GSp}}
\newcommand{\Sp}{{\rm Sp}}
\newcommand{\K}[1]{{\rm K}(\p^{#1})}
\newcommand{\Kl}[1]{{\rm Kl}(\p^{#1})}
\newcommand{\val}{{\rm val}}
\newcommand{\cInd}{\text{\rm c-Ind}}
\newcommand{\trace}{{\rm tr\,}}
\newcommand{\Supp}{{\rm Supp}}
\newcommand{\qed}{\hspace*{\fill}\rule{1ex}{1ex}}
\newcommand{\forget}[1]{}
\def\qdots{\mathinner{\mkern1mu\raise0pt\vbox{\kern7pt\hbox{.}}\mkern2mu
\raise3.4pt\hbox{.}\mkern2mu\raise7pt\hbox{.}\mkern1mu}}
\newenvironment{proof}{\vspace{1ex}\noindent\emph{Proof.}\hspace{0.5em}}
	{\hfill\qed\vspace{2ex}}
\newenvironment{bsmallmatrix}{\left[\begin{smallmatrix}}{\end{smallmatrix}\right]}
\newtheorem{lemma}{Lemma.}[section]
\newtheorem{theorem}[lemma]{Theorem.}
\newtheorem{corollary}[lemma]{Corollary.}
\newtheorem{proposition}[lemma]{Proposition.}
\newtheorem{remark}[lemma]{Remark.}
\begin{document}

\thispagestyle{empty}
\begin{center}
 {\bf\Large Klingen Vectors for Depth Zero Supercuspidals of $\GSp(4)$}

 \vspace{3ex}
 Jonathan Cohen
 
 \vspace{3ex}
 \begin{minipage}{80ex}
  \small\textbf{Abstract.} Let $F$ be a non-archimedean local field of characteristic zero and $(\pi, V)$ a depth zero, irreducible, supercuspidal representation of $\GSp(4, F)$. We calculate the dimensions of the spaces of Klingen-invariant vectors in $V$ of level $\p^n$ for all $n\geq0$. 
 \end{minipage}
 \vspace{3ex}
\end{center}

\section{Introduction} This paper is concerned with a dimension counting problem in $p$-adic representation theory for the group $\GSp(4)$. Part of the motivation comes from the following problem in the classical theory of Siegel modular forms. If $\Gamma\subset \Sp(4, \Q)$ is a congruence subgroup, then the dimension of the space of cusp forms of level $\Gamma$ (and integer weight $k$, say) is not known in general. For example, if $$\Gamma=\begin{bmatrix}
\Z & 4\Z & \Z & \Z \\
\Z & \Z & \Z & \Z \\
\Z & 4\Z & \Z & \Z \\
4\Z & 4\Z & 4\Z & \Z 
\end{bmatrix}\cap \Sp(4, \Z)$$ is the ``Klingen congruence subgroup of level $4$'' then the associated dimensions were only recently computed in \cite{RSY2022}. The method by which this was achieved required, as one of its several inputs, the  dimensions of spaces of fixed vectors in all irreducible smooth representations of $\GSp(4, \Q_2)$ for the subgroups $${\rm Kl}(4)=\begin{bmatrix}
\Z_2 & \Z_2 & \Z_2 & \Z_2 \\
4\Z_2 & \Z_2 & \Z_2 & \Z_2 \\
4\Z_2 & \Z_2 & \Z_2 & \Z_2 \\
4\Z_2 & 4\Z_2 & 4\Z_2 & \Z 
\end{bmatrix}\cap \GSp(4, \Z_2).$$ These dimensions, and more, had been computed in \cite{Yi2021}. We remark that the different ``shape'' of the subgroups in question is an artifact of different conventions for alternating forms in the classical and representation-theoretic contexts. 

If one hopes to use the approach taken in \cite{RSY2022} for other congruence subgroups, then a necessary component is the determination of dimensions of spaces of fixed vectors in all irreducible smooth representations of $\GSp(4, \Q_p)$ for appropriate local subgroups. In this paper we are concerned with the subgroups $${\rm Kl}(p^n)=\begin{bmatrix}
\Z_p & \Z_p & \Z_p &  \Z_p \\
p^n\Z_p & \Z_p & \Z_p &  \Z_p \\
p^n\Z_p & \Z_p & \Z_p &  \Z_p \\
p^n\Z_p & p^n\Z_p & p^n\Z_p &  \Z_p \\
\end{bmatrix}\cap \GSp(4, \Z_p).$$ At this time, computing the associated dimensions for all representations appears overly ambitious, so we restrict ourselves to the very special case of depth zero supercuspidals (constructed in the next section). While this is a limitation, some interesting phenomena already arise that had not been observed for $n\leq 2$. The most significant of these is that the dimensions now depend on $p$ (once $n\geq 4$), and on finer internal structure of these supercuspidals (once $n\geq3$). This internal structure can also be expressed in terms of Langlands parameters or of $L$-packets; see Remark \ref{L packet remark}. We note in particular that dependence on $p$ never occurs when one instead considers the sequence paramodular subgroups or stable Klingen subgroups; see \cite{Ro2007} and \cite{J2022}. This difference has implications for how one might hope a local newform and oldform theory would work for the Klingen congruence subgroups. For example, it suggests that one would need a plethora of raising operators to obtain all oldforms.

We give a brief outline of the contents of this paper. After setting up notation and definitions in section 2, we show in section 3 that one class of depth zero supercuspidal representations (all of which are nongeneric) will not have any fixed vectors for $\Kl{n}$; see Theorem \ref{Nongeneric Main Theorem}. In section~4 we consider the remaining depth zero supercuspidals, and reduce to the determination and enumeration of a certain set of double cosets, together with a collection of character table computations for $\GSp(4, \Fq)$. Along the way, we can conclude that depth zero supercuspidals will have no fixed vectors for $\Kl{n}$ (for all $n\geq 0$) if they are nongeneric. We conjecture that this remains true for all nongeneric supercuspidals. The main result, our dimension formula, is Theorem~\ref{Main Theorem}.

{\bf Acknowledgments.} This work was done partially while the author was participating in the virtual program of the Institute for Mathematical Sciences, National University of Singapore, in 2022. We also thank Ralf Schmidt for many useful conversations during the writing of this article.

\section{Setup}

Let $F$ be a finite extension of $\Q_p$, with ring of integers $\OF$, maximal ideal $\p$, uniformizer $\varpi$, and residue field $\Fq$. Let $J = \begin{bsmallmatrix}
& & & 1\\
& & 1 & \\
 &-1 & & \\
 -1 & & & 
\end{bsmallmatrix}$ and let $G=\GSp(4, F)=\{ g\in \GL(4, F): {^t}gJg=\mu(g)J  \}$, $Z=Z(G)$, $T\subset G$ the set of diagonal matrices, and $G^0=\{g\in G: \mu(g)\in \OF^\times  \}$. We will need the two maximal compact subgroups $K=\GSp(4, \OF)$ and $$\K{}=\begin{bmatrix}
\OF & \OF & \OF &  \p^{-1} \\
\p & \OF & \OF &  \OF \\
\p & \OF & \OF &  \OF \\
\p & \p & \p &  \OF 
\end{bmatrix}\cap G^0$$ as well as the standard Iwahori subgroup $$I=\begin{bmatrix}
\OF^\times & \OF & \OF &  \OF \\
\p & \OF^\times & \OF &  \OF \\
\p & \p & \OF^\times &  \OF \\
\p & \p & \p &  \OF^\times 
\end{bmatrix}\cap G$$ and the sequence of Klingen congruence subgroups $$\Kl{n}=\begin{bmatrix}
\OF & \OF & \OF &  \OF \\
\p^n & \OF & \OF &  \OF \\
\p^n & \OF & \OF &  \OF \\
\p^n & \p^n & \p^n &  \OF 
\end{bmatrix}\cap G^0$$ where $n$ is a positive integer. Let $K^+$ and $\K{}^+$ be the prounipotent radicals of $K$ and $\K{}^+$, respectively. Define the two matrices 

$$s_1 = \begin{bmatrix}
& 1 & &  \\
1 &  &  & \\
& & & 1 \\
& & 1 & 
\end{bmatrix}  , \qquad s_2 = \begin{bmatrix}
1&  & &  \\
  &  & 1  & \\
&  -1 & &  \\
& &  & 1 
\end{bmatrix}.$$
 The images of $s_1$ and $s_2$ generate the Weyl group $W:=N_G(T)/T$. 

If $H$ is a subgroup of $G$ and $\rho:H\to \GL(W)$ is a representation of $H$, then $\cInd_H^G(\rho)$ is the space of functions $f:G\to W$ such that $f(hg)=\rho(h)f(g)$ for all $h\in H$, $g\in G$, with $f$ being smooth and having compact support modulo $H$. This space is a smooth representation of $G$ under right translation. 

If $\pi$ is a depth zero supercuspidal representation of $G$ then $\pi$ arises in one of two ways. Either $\pi = \cInd_{ZK}^G(\sigma)$ where $\sigma|_K$ is an inflation of an irreducible cuspidal representation of $K/K^+$, or  $\pi = \cInd_{N_G(\K{})}^G (\tau)$ where $\tau$ is an irreducible representation of $N_G(\K{})$ and $\tau|_{\K{}}$ is an inflation of a cuspidal representation of $\K{}/\K{}^+$. We will compute $\dim \pi^{\Kl{n}}$ for all such~$\pi$. Since $\pi$ is supercuspidal, it has no fixed vectors for an Iwahori subgroup, and in particular $\pi^{\Kl{}}=0$. We therefore need consider only fixed vectors for $\Kl{n}$ with $n\geq 2$.

\section{Klingen vectors for \texorpdfstring{$\pi = \cInd_{N_G(\K{})}^G (\tau)$}{}}

We will first consider those depth zero supercuspidals coming from $\K{}$. It turns out that these do not admit any Klingen vectors of any level. 

\begin{theorem} \label{Nongeneric Main Theorem}
Let $\pi = \cInd_{N_G(\K{})}^G (\tau)$ as above. Then $\pi^{\Kl{n}}=0$ for all $n\geq 0$. 
\end{theorem}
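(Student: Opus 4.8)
The plan is to use a Mackey-theory / double-coset argument. Since $\pi = \cInd_{N_G(\K{})}^G(\tau)$, a vector $f \in \pi$ is a function $f: G \to W_\tau$ with $f(hg) = \tau(h)f(g)$ for $h \in N_G(\K{})$, compactly supported mod $N_G(\K{})$. Being fixed by $\Kl{n}$ means $f(gk) = f(g)$ for all $k \in \Kl{n}$, so $f$ is determined by its values on a set of representatives for $N_G(\K{}) \backslash G / \Kl{n}$, and on the representative $g$ the value $f(g) \in W_\tau$ must be fixed by $\tau(N_G(\K{}) \cap g\Kl{n}g^{-1})$. Hence $\dim \pi^{\Kl{n}} = \sum_{g} \dim W_\tau^{\,\tau(N_G(\K{}) \cap g\Kl{n}g^{-1})}$, summed over the double cosets. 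To prove the theorem I must show every one of these fixed subspaces vanishes, i.e. for each double-coset representative $g$ the group $N_G(\K{}) \cap g\Kl{n}g^{-1}$, viewed through $\tau$, acts without nonzero invariants on $W_\tau$.

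The key input is the structure of $\tau$: its restriction to $\K{}$ is inflated from an irreducible \emph{cuspidal} representation $\bar\tau$ of the finite group $\K{}/\K{}^+$. A cuspidal representation of a finite reductive group has no nonzero vectors fixed by the unipotent radical of any proper parabolic subgroup. So the strategy is: for each double-coset representative $g$, intersect $g\Kl{n}g^{-1}$ with $\K{}$, project to $\K{}/\K{}^+$, and show the image contains the unipotent radical of some proper parabolic of $\K{}/\K{}^+$ (or otherwise a subgroup with no $\bar\tau$-invariants) — the point being that $\Kl{n}$, which is contained in $G^0$ and has a large unipotent-type part, is ``too big'' relative to $\K{}^+$ in every conjugate for a cuspidal representation to survive. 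One must also handle the $N_G(\K{})/\K{}$ part, but since $N_G(\K{})/\K{}\cong \K{}$-normalizing torus element acting, the same cuspidality obstruction applies; and one should note $\K{}$ is a \emph{non-special} (paramodular-type) maximal parahoric, so its reductive quotient and parabolics differ from those of $K$ — this is exactly why these particular supercuspidals behave differently from the $ZK$-type ones.

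Concretely the steps are: (1) set up the double-coset parametrization and reduce to the invariants computation above; (2) enumerate $N_G(\K{}) \backslash G / \Kl{n}$, most efficiently via $N_G(\K{}) \backslash G / K$ together with $K \backslash K / \Kl{n}$ or an Iwahori/affine-Weyl decomposition — using that $\Kl{n} \supset \K{n}$-type congruence data and $G = \bigsqcup N_G(\K{}) g K$ is governed by the affine building (two vertices, the $K$-vertex and the $\K{}$-vertex); (3) for each representative, compute $N_G(\K{}) \cap g\Kl{n}g^{-1}$ modulo $\K{}^+$ and exhibit inside it the unipotent radical of a proper parabolic of $\K{}/\K{}^+$; (4) invoke cuspidality of $\bar\tau$ to conclude each summand is zero.

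The main obstacle I expect is step (3): one needs genuinely explicit matrix computations to see which conjugates $g\Kl{n}g^{-1}$ meet $\K{}$, and to identify the image in the reductive quotient $\K{}/\K{}^+$ (which is $\SO(5,\Fq)$-like, an $\Fq$-form coming from the non-special vertex) precisely enough to locate a unipotent radical inside it. The subtlety is uniformity in $n$: one wants a single argument covering all $n \geq 2$ (the cases $n = 0, 1$ being trivial by supercuspidality), so the computation should be organized around the ``worst case'' $n = 2$ or around a limiting parahoric, with larger $n$ only shrinking $\Kl{n}$ inside relevant conjugates and hence only enlarging the obstruction. A secondary point requiring care is checking that the $\tau$-action of the nontrivial coset of $N_G(\K{})/\K{}$ (generated by an element normalizing but not in $\K{}$, e.g.\ a suitable $\diag$-times-Weyl element scaled by $\varpi$) does not accidentally create invariants where the $\K{}$-action alone had a unipotent obstruction — but since invariants under a larger group are a subspace of invariants under $\K{} \cap g\Kl{n}g^{-1}$, this only helps.
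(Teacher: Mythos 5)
Your framework coincides with the paper's: Mackey decomposition over $N_G(\K{})\backslash G/\Kl{n}$, reduction to showing $\tau^{R_g}=0$ for $R_g=(g\Kl{n}g^{-1}\cap N_G(\K{}))\K{}^+$, enumeration of representatives via $G=IN_G(T)I$ (using that $N_G(\K{})$ contains $I$ together with Weyl representatives, so one may take $g=t_{i,j}S(x,y,z)$ with $x,y,z\in\p$ and $j\geq 0$), and cuspidality of $\tau|_{\K{}}$ applied to a unipotent radical inside the image of $R_g$ in $\K{}/\K{}^+$. But the step you defer as ``the main obstacle'' is the entire content of the theorem: for each representative one must actually produce elements of $\Kl{n}$ whose $g$-conjugates land in $\K{}$ and fill out a unipotent radical modulo $\K{}^+$. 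The paper does this by splitting on whether $x=0$: if so, conjugating the $(3,2)$-root elements with entry $\varpi^jd$, $d\in\OF$, puts the full $(3,2)$-root group $\OF$ into $R_g$; if $x\neq 0$ (normalized so $1\leq\val(x)\leq i$), conjugating the $(3,2)$-root elements with entry $d\in\p^{2i+j+1-2\val(x)}$ puts the long-root group with $(4,1)$-entry $\p$ into $R_g$, and in each case the image in $\K{}/\K{}^+$ is the unipotent radical of a proper parabolic. Without these (or equivalent) explicit conjugations there is no proof, only a plausible plan.

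A second, substantive error is your proposed uniformity device: ``organize around the worst case $n=2$\dots with larger $n$ only shrinking $\Kl{n}$\dots and hence only enlarging the obstruction.'' The monotonicity runs the other way: shrinking $\Kl{n}$ shrinks $R_g$, which can only enlarge $\tau^{R_g}$, so larger $n$ makes the desired vanishing \emph{harder}; moreover the set of double cosets grows with $n$, so no single level is a worst case. What actually makes the paper's argument uniform in $n$ is that the elements of $\Kl{n}$ it conjugates sit in the $(3,2)$ entry, which equals $\OF$ independently of $n$. (A minor further inaccuracy: the reductive quotient $\K{}/\K{}^+$ at the paramodular vertex is of type $\GL_2\times_{\GL_1}\GL_2$, not ``$\SO(5,\Fq)$-like''; the latter describes a hyperspecial vertex. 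This does not derail the strategy, but it matters once you must actually locate parabolics in that quotient.)
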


\begin{proof} For $g\in G$ and $n\geq 1$ define $$R_g := \left(g\Kl{n}g^{-1}\cap \K{}\right)\K{}^+ =\left( g\Kl{n}g^{-1}\cap N_G(\K{})\right)\K{}^+ .$$ If $f\in \pi^{\Kl{n}}$ then $f(g) \in \tau^{R_g}$. It suffices to show $\tau^{R_g}=0$ for $g$ ranging over a set of representatives for $N_G(\K{})\backslash G/\Kl{n}$. We have $G = I N_G(T) I$ by a well-known decomposition (see \cite{Iw1965}). Since $N_G(\K{})$ contains $I$ along with a set of representatives for $N_G(T)/T$, and using the Iwahori factorization of $I$, we may take 
$$g=\begin{bmatrix}
\varpi^{2i+j} & & & \\
 & \varpi^{i+j} & & \\
 & & \varpi^i  & \\
 & & & 1
\end{bmatrix} \begin{bmatrix}
1 &  & & \\
x & 1 & & \\
y & & 1 & \\
z & y & -x & 1
\end{bmatrix}$$ where $i, j\in \Z$, $x, y, z\in \p$. 
Using $s_2\in \Kl{n}\cap \K{}$ we may assume that $i+j\geq i$, so $j\geq 0$. 

If $x = 0$ then let $d\in \OF$ and compute $$g\begin{bmatrix}
1 & & & \\ 
& 1 & & \\
& \varpi^jd & 1 & \\
& & & 1
\end{bmatrix}g^{-1} = \begin{bmatrix}
1 & & & \\ 
& 1 & & \\
& d & 1 & \\
& & & 1
\end{bmatrix}.$$ Thus $R_g\supset \begin{bsmallmatrix}
1 & & & \\
& 1 & & \\
 & \OF & 1 & \\
  & & & 1
\end{bsmallmatrix}.
$ The image of $\begin{bsmallmatrix}
1 & & & \\
& 1 & & \\
 & \OF & 1 & \\
  & & & 1
\end{bsmallmatrix}
$ in $\K{}/\K{}^+$ is the unipotent radical of a parabolic subgroup. So $\tau^{R_g}=0$ since $\tau|_{\K{}}$ is cuspidal. 

If $x\neq 0$ then we may assume $1\leq \val(x) \leq i$, since $g = S(\varpi^{-i}x, 0, 0) t_{i,j} S(0, y, z+xy)$, and $S(\varpi^{-i}x, 0, 0)\in \K{}$ if $\val(x)>i$. Then let $d\in \p^{2i+j+1-2\val(x)} \subset \p^{j+1}$, so $dx \in  \p^{i+j+1}$, and compute $$g\begin{bmatrix}
1 & & & \\ 
& 1 & & \\
& d & 1 & \\
& & & 1
\end{bmatrix}g^{-1} = \begin{bmatrix}
1 & & & \\ 
& 1 & & \\ 
-\varpi^{-i-j}dx & \varpi^{-j}d & 1 & \\
-\varpi^{-2i-j}dx^2 & -\varpi^{-i-j}dx & & 1
\end{bmatrix}.$$ Thus $R_g \supset \begin{bsmallmatrix}
1 & & & \\
& 1 & & \\
 &  & 1 & \\
\p  & & & 1
\end{bsmallmatrix}.$ The image of $\begin{bsmallmatrix}
1 & & & \\
& 1 & & \\
 &  & 1 & \\
\p  & & & 1
\end{bsmallmatrix}
$ in $\K{}/\K{}^+$ is the unipotent radical of a parabolic subgroup. So $\tau^{R_g}=0$ since $\tau|_{\K{}}$ is cuspidal. \end{proof}

\section{Klingen vectors for \texorpdfstring{$\pi = \cInd_{ZK}^G(\sigma)$}{}}

For $g\in G$ and fixed $n\geq 1$ define the objects \begin{eqnarray}
[g] &=& ZK g \Kl{n}\nonumber \\
R_g &=& (g\Kl{n}g^{-1}\cap K)K^+/K^+ \nonumber\\
\Supp(\pi) &=& \{ [g] : \sigma^{R_g}\neq 0  \} = \{ [g]: \exists f \in \pi^{\Kl{n}}, \ f(g)\neq 0   \}.  \nonumber
\end{eqnarray}

Then $$\dim \pi^{\Kl{n}} = \sum\limits_{[g]\in \Supp(\pi)} \dim \sigma^{R_g}.$$ We therefore require a determination of $\Supp(\pi)$ and of $\dim \sigma^{R_g}$ for $[g]\in \Supp(\pi)$. 

\subsection{Determination of \texorpdfstring{$\Supp(\pi)$}{}} In this section we obtain a parametrization of  $\Supp(\pi)$; in a subsequent section we will compute the values of $\dim \sigma^{R_g}$ via the character table of $K/K^+ = \GSp(4, \Fq)$. We have $G = I N_G(T) I$ by a well-known decomposition; see \cite{Iw1965}. Since $K\supset I$ and $K\supset \langle s_1, s_2\rangle$, using the Iwahori factorization for $I$ we may take double coset representatives for $ZK \backslash  G/\Kl{n}$ to have the form $$t_{i, j}S(x, y,z) : = \begin{bmatrix}
\varpi^{2i+j} & & & \\
 & \varpi^{i+j} & & \\
 & & \varpi^i  & \\
 & & & 1
\end{bmatrix} \begin{bmatrix}
1 &  & & \\
x & 1 & & \\
y & & 1 & \\
z & y & -x & 1
\end{bmatrix}$$ where $x, y, z\in \p$. Using $s_2\in \Kl{n}\cap K$ we may assume that $i+j\geq i$, so $j\geq 0$. 

\begin{lemma} \label{double coset constraints}
Consider $[t_{i,j}S(x, y, z)]$ with $j\geq 0$. 

a) If $x\neq 0$ we may assume $1\leq \val(x)\leq i-1$. If $y\neq 0$ we may assume $1\leq \val(y)\leq i+j-1$. If $z\neq 0$ we may assume $1\leq \val(z) \leq \min(2i+j, n)-1$. 

b) If $xy\neq 0$ we may assume $1\leq \val(y/x) \leq j-1$. 

c) If $xz\neq 0$, we may assume $1\leq \val(z/x)\leq i+j-1$. 

d) If $yz\neq 0$ we may assume $\min(1, 1+i)\leq \val(z/y) \leq \max( i, \val(y))-1$. 
\end{lemma}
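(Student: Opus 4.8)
The plan is to reduce each of the four assertions to an explicit conjugation computation showing that the double coset $[t_{i,j}S(x,y,z)]$ is unchanged when we multiply $t_{i,j}S(x,y,z)$ on the left by an element of $ZK$ and on the right by an element of $\Kl{n}$. The basic principle is that $t_{i,j}$ is diagonal, so conjugating a unipotent one-parameter subgroup by $t_{i,j}^{\pm 1}$ simply scales the entry by a power of $\varpi$; the trick in each case is to choose the one-parameter subgroup (or a product of them) so that after conjugating past $t_{i,j}$ it lands in $K$ (absorbed on the left into $ZK$), while before conjugating it already lies in $\Kl{n}$ (absorbed on the right), the net effect being to change $(x,y,z)$ by something we can use to lower a valuation. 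In each part the first task is to record, for a fixed choice of root subgroup $u_\alpha(d)$, the conjugate $t_{i,j}u_\alpha(d)t_{i,j}^{-1}$ and also the commutator $S(x,y,z)^{-1}u_\alpha(d)S(x,y,z)$, which is again unipotent with entries that are polynomials in $x,y,z,d$.

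For part (a): to bound $\val(x)$ one conjugates $g$ by the lower-left $(2,1)$ root subgroup $u(d)=I+d\,e_{21}$ (symplectically completed), getting $t_{i,j}u(d)t_{i,j}^{-1}=I+\varpi^{i}d\,e_{21}$; choosing $d\in\p^{1-i}$ when $\val(x)\ge i$ shows $S(x,y,z)$ can be replaced by one with strictly smaller $x$-valuation, hence we may assume $\val(x)\le i-1$; the lower bound $\val(x)\ge 1$ is already part of the standing hypothesis $x\in\p$. The $y$-entry is handled the same way with the $(3,1)$ root subgroup, which scales by $\varpi^{i+j}$, forcing $\val(y)\le i+j-1$. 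For $z$, conjugating the long-root subgroup $I+d\,e_{41}$ by $t_{i,j}$ scales by $\varpi^{2i+j}$; but we also need the element to lie in $\Kl{n}$, which only requires the $(4,1)$-entry to lie in $\p^n$ — this is where the $\min(2i+j,n)$ appears, since we can clear $z$ down to valuation $\max(0, \text{whichever of } 2i+j, n \text{ binds})$, i.e. we may assume $\val(z)\le\min(2i+j,n)-1$. Parts (b), (c), (d) are analogous but use elements that mix two of the parameters: e.g. for (b), one uses that $S(x,y,z)$ and a suitable root element interact so that conjugating the $(3,2)$-type subgroup by $t_{i,j}$ scales by $\varpi^{j}$, and the $S$-conjugation produces a shift of $y$ by a multiple of $x$, so one can reduce $\val(y/x)$ until it is $\le j-1$; (c) uses the combination scaling by $\varpi^{i+j}$ and a shift of $z$ by a multiple of $x$; (d) uses a subgroup scaling by $\varpi^{i}$ (explaining the $\max(i,\val(y))$) with a shift of $z$ by a multiple of $y$, and the lower bound $\min(1,1+i)$ records the two cases $i\ge 0$ and $i<0$ (when $i<0$ the relevant element only needs a weaker integrality).

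The main obstacle is bookkeeping rather than conceptual: one must verify in each case that the chosen unipotent element, after being conjugated by $S(x,y,z)$ (not just by $t_{i,j}$), still lies in $\Kl{n}$ — the $S$-conjugation introduces cross terms in the other coordinates, and one has to check those land in the correct powers of $\p$ given the ranges already established in earlier parts. So the parts are genuinely done in order, with (a) feeding into (b)--(d). I would also need to double-check that left-multiplication by the resulting $K$-element really is by an element of $ZK\cap K$ and does not disturb the $t_{i,j}$-part or the sign conventions of the symplectic form; this is routine but is the place where an error would hide. Concretely I would tabulate, once and for all, the conjugates $t_{i,j}\,u_\alpha(d)\,t_{i,j}^{-1}$ for each simple-type root subgroup $\alpha$ and the commutators $[u_\alpha(d),S(x,y,z)]$, and then each of (a)--(d) is a one-line deduction from that table together with the constraint $x,y,z\in\p$ and the already-proven bounds.
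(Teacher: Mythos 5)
Your overall strategy --- modify $(x,y,z)$ by absorbing $t_{i,j}$-conjugates of unipotent elements into $ZK$ on the left and the elements themselves into $\Kl{n}$ on the right --- is exactly the paper's, and it does dispose of parts (a) and (b) essentially as you describe. Two small corrections there: the mechanism is ``$x$ matters only modulo $\p^{i}$, so if $\val(x)\geq i$ we may take $x=0$,'' not that one can \emph{lower} $\val(x)$; and your conjugation exponent is flipped, since the $(2,1)$ entry of $t_{i,j}\,u(d)\,t_{i,j}^{-1}$ is scaled by $\varpi^{-i}$, not $\varpi^{i}$.

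The genuine gap is in (c) and (d), where each claim is a two-sided bound and you only supply a mechanism for one side. The shifts you describe (coming from lower or Levi unipotents, which do act on $(x,y,z)$ by translations, possibly after solving for the shift parameter) give the upper bounds $\val(z/x)\leq i+j-1$ and $\val(z/y)\leq\max(i,\val(y))-1$. The lower bounds $\val(z/x)\geq 1$ and $\val(z/y)\geq\min(1,1+i)$ require conjugating $S(x,y,z)$ by \emph{upper} unipotent elements --- with entries $\pm x/z$ in positions $(1,3),(2,4)$ for (c), and $\pm y/z$ in positions $(1,2),(3,4)$ for (d) --- and the interaction of such an element with the lower unipotent $S(x,y,z)$ is not a shift but a Gauss elimination: it produces $S(0,\,y/(1-xy/z),\,z/(1-xy/z))$ (resp.\ $S(x/(1+xy/z),\,0,\,z/(1+xy/z))$) times a correction matrix with diagonal entries $1\mp xy/z$ and further upper off-diagonal entries such as $x^2/z$, which must separately be checked to lie in $\Kl{n}$. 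Moreover the move is only available when $\val(z/x)\leq 0$ (resp.\ $\val(z/y)\leq\min(0,i)$), which is precisely the negation of the claimed inequality; so these are not generic rows of a commutator table but specific eliminations whose feasibility is the whole point. Until you exhibit these elements and verify the correction terms, half of (c) and (d) is unproved.
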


\begin{proof}
a) These conditions follow from the equalities \begin{eqnarray}
t_{i,j}S(x, y, z) &=& S(\varpi^{-i}x,0,0) t_{i,j}S(0,y,z+xy) \nonumber \\ 
&=& S(0,\varpi^{-i-j}y,0)t_{i,j}S(x, 0, z-xy) \nonumber \\
&=& S(0,0,\varpi^{-2i-j}z) t_{i,j}S(x,y,0)  \nonumber \\
&=& t_{i,j}S(x,y,0)S(0,0,z).  \nonumber 
\end{eqnarray}

b) Let $c\in \p^j$ and $A = \begin{bsmallmatrix}
1 & & & \\
& 1 & b  & \\
& c & 1 & \\
& & & 1
\end{bsmallmatrix}\in K$. Then \begin{eqnarray}
 [t_{i, j} S(x, y, z)] &=& \left[\left(t_{i, j}At_{i,j}^{-1}\right)t_{i,j}A^{-1} S(x, y, z) A\right] \nonumber \\
 &=& [t_{i,j}S(x-by, y-cx, z)]. \nonumber
\end{eqnarray} If $\val(x)\geq \val(y)$ then we may take $c=0$ and $b=x/y$ to eliminate $x$. If $\val(y)-\val(x)\geq j$ then we may take $b=0$ and $c=y/x$ to eliminate $y$.   

c) If $z/x\in \p^{i+j}$ then $z/x^2\in \p^{j+1}$ since $x\neq 0$ and using the first part. For $d\in \OF$ let $A(d) = \begin{bsmallmatrix}
1 & & & \\
& 1 & & \\
& d & 1 & \\
& & & 1
\end{bsmallmatrix}$. Then \begin{eqnarray}
[t_{i,j}S(x, y, z)] &=&[t_{i,j}S(x, y, z)A(-z/x^2) ] \nonumber \\ 
&=& [A(-\varpi^{-j}z/x^2) t_{i,j}S(x, y+z/x, z)] \nonumber \\
&=&[t_{i,j} S(0,z/x, 0)S(x,y,0)] \nonumber \\
&=&[S(0,\varpi^{-i-j}z/x, 0)t_{i,j} S(x,y,0)] \nonumber \\
&=& [t_{i,j}S(x,y,0)].  \nonumber 
\end{eqnarray}

If $\val(z/x)\leq 0$ then we may assume $1\leq \val(x)<i$ and $i+j>0$. For $d\in \OF$, let $C(d) = \begin{bsmallmatrix}
1 & & d & \\
& 1 & & d \\
& & 1 & \\
& & & 1
\end{bsmallmatrix}$. Then \begin{eqnarray}
[t_{i,j}S(x, y, z)] &=&[t_{i,j} C(x/z)C(-x/z)S(x,y,z) ] \nonumber \\ 
&=& [C(\varpi^{i+j}x/z) t_{i,j}C(-x/z)S(x,y,z)]  \nonumber \\
&=&[t_{i,j}C(-x/z)S(x,y,z)] \nonumber \\
&=&[t_{i,j}S(0,y', z') h] \nonumber    \\
&=& [t_{i,j}S(0,y, z)]. \nonumber
\end{eqnarray} where $y' = y/(1-xy/z)$, $z' = z/(1-xy/z)$, $$h = \begin{bmatrix}
1-xy/z & & -x/z & \\
& 1-xy/z & x^2/z & -x/z \\
& & 1+xy'/z & \\
& & & 1+ xy'/z
\end{bmatrix}\in \Kl{n}$$ and we used integral diagonal matrices for the last equality.  

d) Suppose $\val(z/y) \geq \max(i, \val(y))$. For $d\in \OF$ let $B(d) = \begin{bsmallmatrix}
1 & & & \\
& 1 & d & \\
&  & 1 & \\
& & & 1
\end{bsmallmatrix}$. Then \begin{eqnarray}
[t_{i,j}S(x, y, z)] &=&[t_{i,j}S(x,y,z) B(z/y^2)] \nonumber \\ 
&=& [B(\varpi^j z/y^2) t_{i,j}S(x-z/y,y,z)]  \nonumber \\
&=& [t_{i,j} S(-z/y, 0,0) S(x,y,0)] \nonumber \\
&=& [S(-\varpi^{-i}z/y, 0,0)t_{i,j} S(x,y,0)]  \nonumber \\
&=& [t_{i,j}S(x,y,0)]. \nonumber
\end{eqnarray}

Finally, suppose $\val(z/y)\leq \min(0, i)$. Let $D(d) = \begin{bsmallmatrix}
1 & d &  & \\
& 1 & & \\
 & & 1 & -d \\
 & & & 1
\end{bsmallmatrix}$. Then \begin{eqnarray}
[t_{i,j}S(x, y, z)] &=&[t_{i,j} D(-y/z)D(y/z)S(x,y,z) ] \nonumber \\ 
&=& [D(-\varpi^{i}y/z) t_{i,j}D(y/z)S(x,y,z)]  \nonumber \\
&=&[t_{i,j}D(y/z)S(x,y,z)] \nonumber \\
&=&[t_{i,j}S(x',0, z') h] \nonumber    \\
&=& [t_{i,j}S(x,0, z)]. \nonumber
\end{eqnarray} where $x' = x/(1+xy/z)$, $z' = z/(1+xy/z)$, $$h = \begin{bmatrix}
1+xy/z & y/z & & \\
& 1-x'y/z &  &  \\
& & 1+xy/z & -y/z \\
& & & 1- x'y/z
\end{bmatrix}\in \Kl{n}$$ and we used integral diagonal matrices for the last equality.  
\end{proof}

Define the two unipotent radicals $U_S,  U_K \subset \GSp(4, \Fq)$ by $$U_S =\left\{\begin{bmatrix}
1 & & & \\
& 1 & & \\
x & y & 1 & \\
z & x &  & 1
\end{bmatrix}: x,y,z\in \Fq\right\} , \qquad U_K = \left\{ \begin{bmatrix}
1 & & & \\
x & 1 & & \\
y & & 1 & \\
z & y & -x  & 1
\end{bmatrix}: x,y,z\in \Fq \right\}.$$ 
Since $\sigma$ is cuspidal, $\sigma^{R_g}=0$ if $R_g$ contains a conjugate of $U_S$ or $U_K$.

\begin{lemma} \label{Main Support Restrictions}
Let $j\geq 0$ and $[t_{i,j}S(x,y,z)]\in \Supp(\pi)$. Then $2-n\leq i\leq n-1$ and $2i+j\geq 1$. 
\end{lemma}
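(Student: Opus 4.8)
The plan is to exploit the observation already recorded before the statement: $\sigma^{R_g}=0$ whenever $R_g$ contains a $K/K^+$-conjugate of $U_S$ or $U_K$. So the strategy is to show that if $i$ or $2i+j$ violates the claimed bounds, then $g\Kl{n}g^{-1}\cap K$, read modulo $K^+$, contains such a conjugate, forcing $[t_{i,j}S(x,y,z)]\notin\Supp(\pi)$. Concretely, for a suitable one-parameter unipotent subgroup $u(d)$ of $\Kl{n}$ (with $d$ ranging over $\OF$ or $\p$) I would compute $g\,u(d)\,g^{-1}$ explicitly, check that it lies in $K$ under the assumed inequality, and identify its image in $K/K^+=\GSp(4,\Fq)$ with (a conjugate of) a root subgroup inside $U_S$ or $U_K$. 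Since $\Kl{n}$ contains the full lower-left block of size $\p^n$ and the diagonal/upper-triangular entries in $\OF$, there is a generous supply of such one-parameter subgroups to conjugate.

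First I would handle the constraint $2i+j\geq 1$. Note $2i+j=\val(\det$-type entry$)$ is exactly the exponent of $\varpi$ in the top-left entry of $t_{i,j}$. If $2i+j\leq 0$, conjugating the long-root subgroup $\begin{bsmallmatrix}1&&&*\\&1&&\\&&1&\\&&&1\end{bsmallmatrix}$ (entry in $\p^{\,?}$, with the relevant exponent being $2i+j$ after conjugation by $t_{i,j}$) — or rather its transpose sitting in $\Kl{n}$, which has a $\p^n$ in the bottom-left corner — by $g^{-1}$ produces a bottom-left entry that becomes integral, in fact a unit multiple of all of $\OF$; its image in $K/K^+$ lies in $U_S$ (or $U_K$), killing $\sigma^{R_g}$. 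The upper bound $i\leq n-1$ and the lower bound $i\geq 2-n$ I would obtain symmetrically, using the $s_2$-normalization already in force (so $j\geq 0$) together with the Atkin–Lehner-type element or the outer conjugation that swaps the roles of $i$ and $-i-j$; concretely, conjugating an appropriate root subgroup of $\Kl{n}$ (whose entry lies in $\p^n$) by $t_{i,j}S(x,y,z)$ shifts the valuation by a linear combination of $i$, $j$, and the valuations of $x,y,z$, and when $i$ is too large or too small this valuation drops to $0$, landing the conjugate inside $K$ with image in a conjugate of $U_S$ or $U_K$. Here I would lean on part (a) of Lemma~\ref{double coset constraints} to bound $\val(x),\val(y),\val(z)$ so that the cross-terms in the conjugation cannot save us.

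The bookkeeping I expect to be the main obstacle is twofold: (1) keeping track of which root subgroups, after conjugation by the non-diagonal part $S(x,y,z)$, actually land in $U_S$ versus $U_K$ versus neither — the lower-triangular unipotent $S(x,y,z)$ mixes the root subgroups, so one may need to first absorb or shear away the $S$-part using the reductions of Lemma~\ref{double coset constraints} before the conjugate is visibly in a unipotent radical; and (2) handling the boundary/degenerate cases (e.g.\ $i=0$, or $x=y=z=0$) where the naive choice of one-parameter subgroup conjugates to something already in $K^+$ and hence gives no information — there one must choose a different root direction. I would organize the proof as a short case analysis on the sign of $2i+j-1$, then on the sign of $i-(n-1)$ and $i-(2-n)$, producing in each bad case an explicit $d$ and an explicit matrix identity of the shape displayed in the proof of Theorem~\ref{Nongeneric Main Theorem}, and then invoking cuspidality of $\sigma$. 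Everything reduces to finite, if tedious, $4\times 4$ matrix multiplications together with valuation inequalities, with no conceptual surprise beyond choosing the right root subgroup in each regime.
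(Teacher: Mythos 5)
There is a genuine gap, and it is the central one: your plan produces, in each bad range of $(i,j)$, a \emph{single one-parameter root subgroup} of $\Kl{n}$ whose conjugate by $g$ lands in $K$ with image ``inside $U_S$ or $U_K$''. But the cuspidality criterion you quote requires $R_g$ to \emph{contain a conjugate of all of} $U_S$ or $U_K$ (a full three-dimensional unipotent radical of a proper parabolic of $\GSp(4,\Fq)$); merely containing a root subgroup of one of them proves nothing. Indeed, for generic cuspidal $\sigma$ the paper's Table~1 lists many $[g]\in\Supp(\pi)$ whose $R_g$ contains such root subgroups yet has $\dim\sigma^{R_g}=q-1>0$, and the long-root subgroup alone only kills \emph{nongeneric} cuspidals (Lemma~\ref{nongeneric character lemma}), a character-table fact, not a consequence of cuspidality. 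So the sentence ``its image in $K/K^+$ lies in $U_S$ (or $U_K$), killing $\sigma^{R_g}$'' is where the argument breaks.

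Fixing this forces you into what the paper actually does: conjugate a full three-parameter unipotent subgroup of $\Kl{n}$ and solve for the parameters simultaneously, where the choices are interdependent because $S(x,y,z)$ mixes the root directions — e.g.\ for $i\geq n$ one must take $c_3\in 2c_2x-2c_1y+\p^{2i+j-n}$, with $c_3$ depending on $c_1,c_2,x,y$, to make the $(4,1)$ entry integral while the other entries sweep out all of $U_K$. Two further points your plan would stumble on: for the bound $i\geq 2-n$ there is no Atkin--Lehner-type symmetry with $i\leq n-1$; the paper splits into $z\neq 0$ and $z=0$, and in the $z\neq 0$ case it must leave the unipotent part of $\Kl{n}$ entirely, conjugating an element with diagonal entries $a\in 1+\p^{2i+j-\val(z)}$ to generate the missing $(4,1)$ coordinate of $U_S$. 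So the ``no conceptual surprise'' expectation is too optimistic: the choice of subgroup to conjugate is not always a product of root subgroups, and the verification that the image is the \emph{whole} of $U_S$ or $U_K$ is the actual content of the proof.
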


\begin{proof}
Let 
$g = t_{i,j}S(x,y,z)$. We compute $$g\begin{bmatrix}
1 &  &  &  \\
\varpi^n c_1 & 1 &  &  \\
\varpi^n c_2 &  & 1 &  \\
\varpi^n c_3 & \varpi^n c_2 & -\varpi^n c_1 & 1
\end{bmatrix} g^{-1} = \begin{bmatrix}
1 & & & \\
\varpi^{n-i} c_1 & 1 & & \\
\varpi^{n-i-j} c_2 & & 1 & \\
\varpi^{n-2i-j}(c_3 - 2c_2 x + 2c_1 y) & \varpi^n c_2 & -\varpi^n c_1 & 1
\end{bmatrix}.$$ If $i\geq n$ then $i+j\geq n$ and $2i+j\geq n$. Taking $c_1\in \p^{i-n}$, $c_2\in \p^{j-j-n}$, and $c_3\in 2c_2 x - 2c_1 y + \p^{2i+j-n}$ shows $R_g\supset U_K$. Thus $i\leq n-1$. If $2i+j\leq 0$ then $i\leq 0$ and we may assume $x=z=0$. We compute $$g\begin{bmatrix}
1 & b_1 &  & b_3 \\
 & 1 &  &  \\
 & d  & 1 & -b_1 \\
 &  &  & 1
\end{bmatrix}g^{-1} = \begin{bmatrix}
1 & \varpi^i (b_1-yb_3) &  & \varpi^{2i+j}b_3 \\
 & 1 &  &  \\
 & \varpi^{-j} (d+y(2b_1 - yb_3) & 1 & -\varpi^{i}(b_1-yb_3)\\
 &  &  & 1
\end{bmatrix}.$$ Taking $b_1 \in yb_3 + \p^{-i}$, $b_3\in \p^{-2i-j}$, and $d \in -y(2b_1 - yb_3) + \p^{j}$ shows that $R_g$ contains a conjugate of $U_S$. Thus $2i+j\geq 1$. Finally, if $i\leq 1-n\leq -1$ then we may assume $x=0$. Note $i+j \geq -i+1 \geq n$. If $z\neq 0$, then we compute
$$g\begin{bmatrix}
a &  &  &  \\
 & 1 &  &  \\
\varpi^n c & d  & a &  \\
  & \varpi^n c a^{-1} &  & 1
\end{bmatrix}g^{-1} 
= \begin{bmatrix}
a &  &  & \\
 & 1 &  &  \\
\varpi^{n-i-j} c & \varpi^{-j} d & a & \\
\varpi^{-2i-j}z(a -1) & \varpi^{n-i-j} ca^{-1}  &  & 1
\end{bmatrix}.$$ We may assume $\val(z) < 2i+j$, and let $d \in \p^j$, $c\in \p^{i+j-n}$, and $a\in 1+\p^{2i+j-\val(z)}$. Then $R_g\supset U_S$. If $z= 0$, then we compute $$g\begin{bmatrix}
1 & b & & \\
& 1 & & \\
\varpi^n c & d & 1 & -b \\
 & \varpi^n c & & 1
\end{bmatrix}g^{-1} = \begin{bmatrix}
1 & \varpi^{i}b & & \\
& 1 & & \\
\varpi^{n-i-j} c & \varpi^{-j}(d + 2yb) & 1 & -\varpi^i b \\
 & \varpi^{n-i-j} c & & 1
\end{bmatrix}$$ so taking $b\in \p^{-i}$, $c\in \p^{i+j-n}$, and $d \in -2yb + \p^j$ shows $R_g$ contains a conjugate of $U_K$. Thus $2-n\leq i$. 
\end{proof}

\begin{proposition} \label{boundary cases imply diagonal}
Suppose $j\geq 0$ and $[g]:=[t_{i,j}S(x,y,z)]\in \Supp(\pi)$. 

a) If $j=0$ then $[g]=[t_{i,j}]$. 

b) If $i\leq 0$ then  $[g]=[t_{i,j}]$.

\end{proposition}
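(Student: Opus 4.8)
I would show that, in each of the two boundary regimes, as soon as one of $x,y,z$ is nonzero within the range allowed by Lemma~\ref{double coset constraints}, the group $R_g$ already contains a conjugate of $U_S$ or of $U_K$, so $\sigma^{R_g}=0$ and $[g]\notin\Supp(\pi)$; the surviving possibility is then exactly $x=y=z=0$, i.e.\ $[g]=[t_{i,j}]$.

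The reductions come first. For a), part~b) of Lemma~\ref{double coset constraints} forces $xy=0$, because $xy\neq0$ would require $\val(y/x)\le j-1=-1$; splitting into $x=0$ and $y=0$ and applying parts~c) and~d) of that lemma to discard the remaining off-diagonal parameter whenever $\val(z/x)$, resp.\ $\val(z/y)$, leaves the window $[1,i-1]$, one reduces to finitely many representatives of the shapes $t_{i,0}S(x,0,z)$, $t_{i,0}S(0,y,z)$, $t_{i,0}S(0,0,z)$ with tightly constrained valuations. For b), the bound $\val(x)\le i-1$ of Lemma~\ref{double coset constraints}a) is vacuous when $i\le0$, so $x=0$, and Lemma~\ref{Main Support Restrictions} gives $2i+j\ge1$, whence $j\ge1$. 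In every remaining case $x,y,z\in\p$, so $g=t_{i,j}u$ with $u\in K^+$ and $g\Kl{n}g^{-1}=t_{i,j}\bigl(u\Kl{n}u^{-1}\bigr)t_{i,j}^{-1}$: conjugation by $u$ feeds the surviving coordinate into commutator tails, and conjugation by $t_{i,j}$ rescales the matrix entries by fixed powers of $\varpi$.

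What remains is a sequence of explicit conjugations in the spirit of the proof of Lemma~\ref{Main Support Restrictions}. The device that makes them work is to conjugate by $g$ not only unipotent elements of $\Kl{n}$ but also elements with a nontrivial diagonal part: taking that diagonal part congruent to $1$ modulo a suitable power of $\p$ makes one rescaled entry of the conjugate land in $\OF^\times$, and letting the diagonal part run then sweeps out an entire root subgroup of $K/K^+$ while the other entries vanish. (When $x=0$ — automatic in b) and in two of the three cases of a) — the one-parameter subgroup of matrices supported in the $(3,2)$-entry lies in $R_g$ for free, since $\Kl{n}$ contains it at depth $0$, conjugation by $u=S(0,y,z)$ commutes with it, and conjugation by $t_{i,j}$ never shrinks the $(3,2)$-entry.) Carrying out two or three such conjugations, and where needed multiplying $g$ on the left by $s_1$ or $s_2$ to identify the subgroup generated, one gets a conjugate of $U_S$ or $U_K$ inside $R_g$. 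The step I expect to be the main obstacle is exactly this: a single conjugation typically yields only a proper subgroup of $K/K^+$ — often just a two-dimensional abelian one — which cuspidality cannot see, so one must combine the diagonal-conjugation device with the commutation relations among the root subgroups to manufacture the missing root subgroup, and then verify that the root subgroups so obtained really do generate a conjugate of $U_S$ or of $U_K$.
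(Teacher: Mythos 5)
There is a genuine gap in your overall strategy for part a). Your plan is to show that every surviving nonzero parameter forces $R_g$ to contain a conjugate of $U_S$ or of $U_K$, so that cuspidality alone kills $\sigma^{R_g}$. This is exactly how the paper handles part b) and most of part a), but it cannot work for the representatives $t_{i,0}S(0,0,z)$ with $i+1\leq \val(z)\leq \min(2i,n)-1$ (a nonempty range once $i\geq 2$ and $n\geq 4$, and one that Lemma \ref{double coset constraints} does not let you shrink further, since parts c) and d) of that lemma are vacuous when $x=y=0$). A direct computation of $g\Kl{n}g^{-1}\cap K$ for such $g$ shows that $R_g$ is contained in $\left\{\begin{bsmallmatrix} a & & & \\ & D & & \\ u & & & a\end{bsmallmatrix} : a\in\Fq^\times,\ \det D=a^2,\ u\in\Fq\right\}$, whose order has $q$-part only $q^2$; since $|U_S|=|U_K|=q^3$, no conjugate of either unipotent radical can sit inside $R_g$, no matter how many root subgroups you manufacture by your diagonal-conjugation device. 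The paper instead shows $R_g\supset\begin{bsmallmatrix}1 & & \\ & \SL_2(q) & \\ * & & 1\end{bsmallmatrix}$ and then invokes Lemma \ref{Klingen Levi}, which is a character-table computation (carried out separately for $\chi_5(k)$, $X_4(\Theta)$, $\chi_4(k,l)$, $X_5(\Lambda,\omega)$), not a consequence of cuspidality. Without that extra input your argument cannot close case a).

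Your reductions are otherwise sound and match the paper's: for a), Lemma \ref{double coset constraints}b) forces $xy=0$ (and the paper conjugates by $s_2$ to put $x=0$ rather than splitting cases); for b), $\val(x)\leq i-1$ is impossible when $i\leq 0$, so $x=0$, and the remaining cases $y\neq 0$ and $y=0,\ z\neq 0$ do succumb to explicit conjugations producing conjugates of $U_K$ and $U_S$ respectively. Also note that for part a) with $x=y=0$ and $1\leq\val(z)\leq i$ your approach does work (one gets $U_S$ directly, as in the later proposition on $[t_{i,j}Z_k]$); it is only the upper part of the range of $\val(z)$ where the character-table lemma is unavoidable.
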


\begin{proof}
a) We may assume $xy=0$ and $1\leq i\leq n-1$. Conjugating with $s_2$ if necessary, we may assume $x=0$ and compute $$g\begin{bmatrix}
1 & & & \\
& d_1 & d_2 & \\
& d_3 & d_4 & \\
& & & \Delta 
\end{bmatrix}g^{-1} = \begin{bmatrix}
1 & & & \\
-\varpi^{-i}yd_2 & d_1 & d_2 & \\
\varpi^{-i} y(1-d_4)  & d_3 & d_4 & \\
\varpi^{-2i}(z(1-\Delta)  -y^2d_2 ) & \varpi^{-i} y(d_1-\Delta ) & \varpi^{-i}yd_2 & \Delta 
\end{bmatrix}.$$ where $\Delta = d_1d_4 - d_2d_3$. If $1\leq \val(y)\leq i-1$ then taking $d_4 = d_1^{-1}(1+d_2d_3)$, $d_3\in \OF$, $d_2\in \p^{2i-\val(y^2)} \subset \p^{i-\val(y)+1}\subset \p$, and $d_1\in 1+\p^{i-\val(y)}$, shows $R_g\supset U_S$. So we may assume $y=0$. If $1\leq \val(z)< 2i$ then taking $d_i$ so that $\Delta\in 1+\p^{2i-\val(z)}$ shows $R_g\supset \begin{bsmallmatrix}
1 & &  \\
& \SL_2(q) & \\
*  & & 1
\end{bsmallmatrix}$. Then Lemma \ref{Klingen Levi}  shows $\sigma^{R_g}=0$. 
 
 b) We may assume $x=0$. Note $j\geq i+j\geq 2i+j\geq 1$. Then $$g\begin{bmatrix}
 d_1d_4 & d_1b & & \\
 & d_1 &  & \\
 & d_3 & d_4 & -b\\
 & & & 1
 \end{bmatrix}g^{-1} = \begin{bmatrix}
 d_1d_4 & \varpi^i d_1b & & \\
 & d_1 &  & \\
 \varpi^{-i-j} (d_4y(d_1-1)+zb)  & \varpi^{-j}d_3 & d_4 & -\varpi^i b \\
 \varpi^{-2i-j}z(d_1d_4-1) & m&  & 1
 \end{bmatrix}$$ where $m=\varpi^{-i-j} (y(d_1-1) +zd_1b)$. Let $d_3\in \p^j$. If $y\neq 0$ we may assume $\val(y)<\val(z)-i$ and take $b\in \p^{-i}$, $d_4=d_1^{-1}\in 1+bz/y + \p^{i+j-\val(y)}$, so $R_g$ contains a conjugate of $U_K$. If $y=0$ and $z\neq 0$, we may assume $\val(z)<2i+j\leq i+j$ and take $b\in \p^{i+j-\val(z)}$, $d_3\in \p^j$, $d_4=1$ and $d_1\in 1+\p^{2i+j-\val(z)}$, so $R_g\supset U_S$.
\end{proof}

\begin{proposition}
Suppose $j\geq 0$ and $[t_{i,j}]\in \Supp(\pi)$. 

a) We have $i+j\leq n-1$. 

b) If $j=0$ then $2i\leq n-1$ and $R_{t_{i,0}}$ is the Klingen Levi subroup.

c) If $i\leq 0$ then $R_{t_{i,j}} = \begin{bmatrix}
                            * & * & 0 & 0 \\
                            0 & * & 0 & 0 \\
                            0 & * & *  & * \\
                            0 & 0 & 0 & *
                            \end{bmatrix}$. 
                            
d) If $i\geq 1$, $j\geq 1$, and $2i+j\leq n-1$ then $R_{t_{i,j}} = \begin{bmatrix}
                            * & 0  & 0 & 0 \\
                            0 & * & 0 & 0 \\
                            0 & * & *  & 0  \\
                            0 & 0 & 0 & *
                            \end{bmatrix}$
                            
e) If $i\geq 1$, $j\geq 1$, and $2i+j\geq n$ then $R_{t_{i,j}} = \begin{bmatrix}
                            * & 0 & 0 & 0 \\
                            0 & * & 0 & 0 \\
                            0 & * & *  &  0\\
                            * & 0 & 0 & *
                            \end{bmatrix}$

\end{proposition}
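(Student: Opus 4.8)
The plan is to compute the group $R_{t_{i,j}}\subset\GSp(4,\Fq)$ explicitly for all $i,j$ and then read off parts (a)--(e). Conjugation by the diagonal matrix $t_{i,j}=\operatorname{diag}(\varpi^{2i+j},\varpi^{i+j},\varpi^i,1)$ multiplies the $(p,q)$-entry of a matrix by $\varpi^{d_p-d_q}$, where $(d_1,d_2,d_3,d_4)=(2i+j,i+j,i,0)$; combining this with the congruence conditions defining $\Kl{n}$, every element of $t_{i,j}\Kl{n}t_{i,j}^{-1}\cap K$ has $(p,q)$-entry in $\p^{\max(e_{pq},0)}$, where
\[
(e_{pq})=\begin{bmatrix}
0 & i & i+j & 2i+j\\
n-i & 0 & j & i+j\\
n-i-j & -j & 0 & i\\
n-2i-j & n-i-j & n-i & 0
\end{bmatrix}.
\]
Reducing modulo $K^+$ then shows $R_{t_{i,j}}$ is contained in the subgroup $H_{i,j}\subset\GSp(4,\Fq)$ whose off-diagonal support lies in $\mathcal F:=\{(p,q):e_{pq}\le 0\}$. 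I would observe that the four pairs of positions linked by the symplectic similitude condition --- $(1,2)\leftrightarrow(3,4)$, $(1,3)\leftrightarrow(2,4)$, $(3,1)\leftrightarrow(4,2)$, $(2,1)\leftrightarrow(4,3)$ --- carry \emph{equal} entries of $(e_{pq})$, so $\mathcal F$ is compatible with this linking and $H_{i,j}$ is the evident subgroup generated by the diagonal torus together with the root subgroups supported in $\mathcal F$.

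For the reverse inclusion $R_{t_{i,j}}\supseteq H_{i,j}$ I would exhibit, for each $(p,q)\in\mathcal F$, an explicit element of $\Kl{n}$ --- an elementary unipotent matrix $1+c\cdot(\text{root vector})$, or an $S(0,\ast,\ast)$ --- whose $t_{i,j}$-conjugate lies in $K$ and realizes an arbitrary residue in the $(p,q)$-slot; these are the same one-line matrix manipulations used throughout the preceding lemmas, and it is precisely here that one checks the congruence making the chosen element lie in $\Kl{n}$ (for instance $\p^{i+j}\subseteq\p^n$ when $e_{31}=e_{42}=n-i-j\le 0$). Combined with the fact that $T\cap K\subseteq\Kl{n}$ is centralized by $t_{i,j}$, so that the whole diagonal torus of $\GSp(4,\Fq)$ lies in $R_{t_{i,j}}$, and with the fact that $\mathcal F\cup\{(p,p)\}$ is closed under root addition in each case that occurs (so these generators fill out all of $H_{i,j}$), this gives $R_{t_{i,j}}=H_{i,j}$.

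It then remains to determine $\mathcal F$ in each range, using Lemma~\ref{Main Support Restrictions} ($2-n\le i\le n-1$, $2i+j\ge1$) to discard entries that cannot be $\le0$. For (a): if $i+j\ge n$ then $e_{32}\le0$ and $e_{31}=e_{42}\le0$; if moreover $2i+j\ge n$ then $e_{41}\le0$ and $R_{t_{i,j}}\supseteq U_S$, so cuspidality of $\sigma$ gives $\sigma^{R_{t_{i,j}}}=0$, contradicting $[t_{i,j}]\in\Supp(\pi)$; if instead $2i+j<n$ then $i<0$, so also $e_{12}=e_{34}\le0$, and $R_{t_{i,j}}$ contains the $K$-conjugate $s_1U_Ks_1^{-1}$ of $U_K$ (its support is exactly the five positions of $\mathcal F$ here), so $\dim\sigma^{R_{t_{i,j}}}\le\dim\sigma^{s_1U_Ks_1^{-1}}=\dim\sigma^{U_K}=0$, again a contradiction; hence $i+j\le n-1$. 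For (b): with $j=0$ one has $i\ge1$, and if $2i\ge n$ then $e_{23}=e_{32}=0$ and $e_{41}\le0$, so $R_{t_{i,0}}$ contains $\begin{bsmallmatrix}1&&\\&\SL_2(q)&\\ *&&1\end{bsmallmatrix}$ and Lemma~\ref{Klingen Levi} forces $\sigma^{R_{t_{i,0}}}=0$; therefore $2i\le n-1$, in which case the only off-diagonal entries of $(e_{pq})$ that are $\le0$ are $e_{23}=e_{32}=0$, so $\mathcal F=\{(2,3),(3,2)\}$ and $H_{i,0}$ is the Klingen Levi. For (c), (d) and (e) one simply lists the off-diagonal entries that are $\le0$ in the stated range --- invoking part (a) where $i+j\le n-1$ is needed --- obtaining $\mathcal F=\{(1,2),(3,4),(3,2)\}$, $\mathcal F=\{(3,2)\}$, and $\mathcal F=\{(3,2),(4,1)\}$ respectively, and in each case $H_{i,j}$ is exactly the displayed matrix group.

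The step I expect to require the most care is the reverse inclusion $R_{t_{i,j}}\supseteq H_{i,j}$: one must check both that every entry of $\mathcal F$ is genuinely realized by an element of $\Kl{n}$ (a congruence check in each of the finitely many shapes of $\mathcal F$) and that the root subgroups so produced, together with the torus, generate \emph{all} of $H_{i,j}$ rather than a proper subgroup; in part (a) one must additionally recognize the relevant unipotent group as a $K$-conjugate of $U_K$ so that cuspidality of $\sigma$ applies. Everything else is bookkeeping with the inequalities above.
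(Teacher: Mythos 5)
Your proposal is correct and follows essentially the same route as the paper: compute the entrywise valuation bounds on $t_{i,j}\Kl{n}t_{i,j}^{-1}$, invoke Lemma~\ref{Main Support Restrictions} for the range of $i$, and rule out the boundary cases via $U_S$, a conjugate of $U_K$, and Lemma~\ref{Klingen Levi}. You are somewhat more explicit than the paper about verifying the reverse inclusion $R_{t_{i,j}}\supseteq H_{i,j}$ (which the paper treats as immediate from the displayed form of $t_{i,j}\Kl{n}t_{i,j}^{-1}$), but this is a refinement of the same argument rather than a different one.
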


\begin{proof} Let $g=t_{i,j}$ and observe that $$g\Kl{n}g^{-1} = \begin{bmatrix}
 \OF & \p^i & \p^{i+j} & \p^{2i+j} \\
 \p^{n-i} & \OF & \p^j & \p^{i+j} \\
 \p^{n-i-j} & \p^{-j} & \OF& \p^i \\
 \p^{n-2i-j} & \p^{n-i-j} & \p^{n-i} & \OF 
\end{bmatrix}\cap G^0.$$ We have $2-n\leq i\leq n-1$ and $2i+j\geq 1$ by Lemma \ref{Main Support Restrictions}. In particular, $i+j\geq 1$ since $j\geq0$. 

a) Suppose $i+j\geq n$. Since $i\leq n-1$, we have $j\geq 1$. If $i\geq1$ then $2i+j\geq n$ and $R_g \supset U_S$. If $i\leq 0$ then $R_g \supset \begin{bsmallmatrix}
1 & *  & & \\
& 1 & & \\
* &* & 1 & * \\
& * & & 1 
\end{bsmallmatrix}$, a conjugate of $U_K$.

b) We have $i\geq 1$ by Lemma \ref{Main Support Restrictions}, so if $2i<n$ then $R_g$ is the Klingen Levi subgroup, while if $2i\geq n$ then $R_g \supset \begin{bsmallmatrix}
       1 & & \\
       & \SL_2(q) & \\
       * & & 1 
       \end{bsmallmatrix}$. Lemma \ref{Klingen Levi} then shows $\sigma^{R_g}=0$. 
       
c) We have $j\geq i+j\geq 2i+j\geq 1$ and $n-i\geq n-2i-j \geq n-i-j\geq 1$. So $R_g$ is as claimed. 
                            
d), e) These are clear. 
\end{proof}

We now consider double cosets with nondiagonal representative. By Proposition \ref{boundary cases imply diagonal} we may assume $i, j\geq 1$. Define the matrices \begin{eqnarray}
X_k &=& S(\varpi^k,0,0)\nonumber \\ 
Y_k &=& S(0, \varpi^k, 0)\nonumber \\ 
Z_k &=& S(0,0,\varpi^k)\nonumber .
\end{eqnarray} It is easy to see using integral diagonal matrices that $[t_{i,j} S(x,0,0)] =  [t_{i,j}X_{\val(x)}]$, $[t_{i,j} S(0,y,0)] =  [t_{i,j}Y_{\val(y)}]$, and $[t_{i,j} S(0,0,z)] =  [t_{i,j}Z_{\val(z)}]$. 

\begin{proposition} Let $1\leq j$, $1\leq i\leq n-1$, and $1\leq  k\leq n-1$. 

a) Suppose $k\leq i-1$. Then $[t_{i,j} X_k]\in \Supp(\pi)$ if and only if $i+j+k\leq n-1$, in which case $$R_{t_{i,j}X_k} = \left\{\begin{bmatrix}
a & & & \\
* & a & & \\
& & d & \\
* & & * & d
\end{bmatrix}: a, d\in \Fq^\times \right\}.$$

b) Suppose $ k\leq i+j-1$. Then $[t_{i,j} Y_k]\in \Supp(\pi)$ if and only if $2i+j\leq\min(n, 2k)-1$, in which case $$R_{t_{i,j}Y_k} = \left\{\begin{bmatrix}
a & & & \\
 & d & & \\
* & *  & a & \\
 & * &  & d
\end{bmatrix} : a, d\in \Fq^\times \right\}.$$

c)  Suppose $ k\leq 2i+j-1$. Then $[t_{i,j} Z_k]\in \Supp(\pi)$ if and only if $i+j+1\leq k$, in which case $$R_{t_{i,j}Z_k} = \left\{\begin{bmatrix}
a & & & \\
& ad & & \\
& * & a/d & \\
* & & & a
\end{bmatrix}: a, d\in \Fq^\times \right\}.$$
\end{proposition}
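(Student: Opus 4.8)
The plan is to run, for each of the three families, the same template as in the proofs of Lemma~\ref{Main Support Restrictions} and Proposition~\ref{boundary cases imply diagonal}: for $g = t_{i,j}V$ with $V \in \{X_k, Y_k, Z_k\}$, write $g\Kl{n}g^{-1}$ explicitly as a matrix of (possibly fractional) ideals together with the linear relations coupling its entries, intersect with $K = \GSp(4,\OF)$ to impose integrality, and reduce modulo $K^+$ to read off $R_g \subseteq \GSp(4,\Fq)$. Since $\sigma$ is cuspidal, $[g] \notin \Supp(\pi)$ as soon as $R_g$ contains a $\GSp(4,\Fq)$-conjugate of $U_S$ or $U_K$ (or, in a boundary subcase, of an $\SL_2(q)$-Levi subgroup, via Lemma~\ref{Klingen Levi}); so the content of the proposition is that, within the stated range of $k$, this happens exactly when the displayed inequality fails, while otherwise $R_g$ is exactly the group written down.

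For the computation I would use that each of $X_k = S(\varpi^k,0,0)$, $Y_k = S(0,\varpi^k,0)$, $Z_k = S(0,0,\varpi^k)$ has the form $1 + \varpi^k u$ with $u$ a fixed nilpotent matrix satisfying $u^2 = 0$, so that for $\kappa = 1 + N \in \Kl{n}$,
$$g\kappa g^{-1} = t_{i,j}\bigl(1 + N + \varpi^{k}[u,N] - \varpi^{2k}\,uNu\bigr)t_{i,j}^{-1},$$
whose $(a,b)$ entry is $\varpi^{\lambda_a-\lambda_b}$, with $\lambda = (2i+j,\,i+j,\,i,\,0)$, times an explicit $\OF$-linear combination of the entries of $N$. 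Intersecting with $K$ forces each such entry to be integral, and the key subtlety is that one entry of $\kappa$ may be pinned down by several integrality conditions at once, of which one must keep the strongest. For instance, in part~a) the $(3,2)$-entry of $\kappa$ is constrained through the $(3,1)$- and through the $(4,1)$-entry of $g\kappa g^{-1}$, and the latter carries the larger negative power of $\varpi$, so the binding condition is $\kappa_{32}\in\p^{\min(n,\,2i+j)-2k}$. After passing to $K/K^+$ the diagonal relations that appear (such as $\kappa_{11}\equiv\kappa_{22}\bmod\p$ in part~a)) cut the maximal torus to the displayed two-dimensional subtorus, and the starred positions are exactly the off-diagonal directions that survive.

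The two implications then go as follows. For \emph{only if}: if the stated inequality fails, the binding constraint on the relevant entry of $\kappa$ is no longer contained in $\p$, and one exhibits, exactly as in the proofs above, explicit elements of $g\Kl{n}g^{-1}\cap K$ whose images in $\GSp(4,\Fq)$ generate a conjugate of $U_K$ in part~a) and of $U_S$ in parts~b) and~c), whence $\sigma^{R_g}=0$. For \emph{if}: when the inequality holds, all of these offending positions reduce to $0$ modulo $\p$, so $R_g$ is exactly the group in the statement; that $\sigma^{R_g}\neq0$ for each of these three solvable subgroups will follow from the explicit values of $\dim\sigma^{R_g}$ obtained from the character table of $\GSp(4,\Fq)$ in the subsequent character computations.

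The main obstacle will be precisely this coupled bookkeeping: for each family one must correctly single out the binding entry of $\kappa$, pin down the exact threshold at which it leaves $\p$ — which is what produces the three distinct inequalities $i+j+k\le n-1$, $2i+j\le\min(n,2k)-1$, and $i+j+1\le k$ — and at the same time check that no extra off-diagonal direction slips into $R_g$, so that $\sigma^{R_g}$ is not killed prematurely.
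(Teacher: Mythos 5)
Your proposal is correct and follows essentially the same route as the paper: conjugate a general element of $\Kl{n}$ by $t_{i,j}V$, impose integrality to find the binding constraint on the coupled entry (e.g.\ $d_3\in\p^{\min(n,2i+j)-2k}$ in part a), exhibit a conjugate of $U_K$ resp.\ $U_S$ when the stated inequality fails, and read off $R_g$ exactly when it holds, deferring $\sigma^{R_g}\neq 0$ to the character-table computations. The only caveat is that the case-by-case bookkeeping you rightly flag as the main work (in particular the two separate failure modes $2i+j\geq n$ and $2i+j\geq 2k$ in part b) still has to be carried out explicitly, as the paper does.
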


\begin{proof}
a) Let $g = t_{i,j}X_k$ and compute $$g \begin{bmatrix}
 a & & & \\
& d_1 & & \\
 & d_3  & a d_4 & \\ 
\varpi^n c  &  & & d_1 d_4
\end{bmatrix} g^{-1} = \begin{bmatrix}
a & & & \\
\varpi^{-i+k}(a-d_1) & d_1 & & \\
-\varpi^{-i-j+k}d_3 & \varpi^{-j}d_3 & ad_4 & \\
\varpi^{-2i-j}(\varpi^nc + \varpi^{2k}d_3) & -\varpi^{-i-j+k}d_3 & m & d_1d_4
\end{bmatrix}
$$ where $m=\varpi^{-i+k}d_4(a-d_1)$. Suppose $i+j+k\geq n$, so $2i+j>i+j+k\geq n$. Taking $d_1=d_4=1$, $a\in 1+\p^{i-k}$, $d_3\in \p^{i+j-k}\subset  \p^{j+1}$, and $c \in -\varpi^{2k-n}d_3 + \p^{2i+j-n}\subset \p^{i+j+k-n}$ shows that $R_g\supset U_K$. Conversely, suppose $1\leq k\leq n-i-j-1$. Taking $c=0$, $d_3\in \p^{2i+j-2k} \subset \p^{i+j-k+1}\subset \p^{j+1}$, and $a\in d_1+\p^{i-k}$ shows that $R_g$ contains the claimed subgroup. To see that this inclusion is an equality, observe first that $2k \leq i-1 + n-i-j-1=n-j-2$. If $$h = \begin{bmatrix}
a_1 & b_1 & b_2 & b_3 \\
\varpi^n c_1 & d_1 & d_2 & b_4\\
\varpi^n c_2 & d_3 & d_4 & b_5 \\
\varpi^n c_3 & \varpi^n c_4 & \varpi^n c_5 & a_2
\end{bmatrix}\in \Kl{n}$$ then $ghg^{-1}\in K$ if and only if the following conditions hold: 
\begin{eqnarray}
d_3&\in& \p^{i+j-k}\nonumber \\
 a_1-d_1-\varpi^{k}b_1&\in& \p^{i-k} \nonumber \\
a_2-d_4-\varpi^{k}b_5&\in&  \p^{i-k} \nonumber \\
 \varpi^{n-2k}(c_3-\varpi^kc_2-\varpi^kc_4)+d_3&\in& \p^{2i+j-2k} \nonumber. 
\end{eqnarray}
Since $i+j-k < \min(n-2k, 2i+j-2k)$, the last condition will fail if $\val(d_3)\leq i+j-k$. Thus we must have $d_3\in \p^{i+j-k+1}\subset \p^{j+1}$ and the conclusion follows from the form of $ghg^{-1}$. 

b) Let $g = t_{i,j}Y_k$ and compute $$g\begin{bmatrix}
a & & & \\
& d_1 &  d_2 & \\
& d_3 &  d_4 & \\
\varpi^n c & & & \Delta/a
\end{bmatrix}g^{-1} = \begin{bmatrix}
a & & & \\
-\varpi^{-i+k}d_2 & d_1 & \varpi^j d_2 & \\
\varpi^{-i-j+k}(a-d_4) & \varpi^{-j }d_3 & d_4 & \\
\varpi^{-2i-j}( \varpi^n c - \varpi^{2k}d_2 ) & \varpi^{-i-j+k}(d_1-\Delta/a) & \varpi^{-i+k}d_2 & \Delta/a
\end{bmatrix}$$ where $\Delta = d_1d_4 -d_2d_3$. Let $d_3\in \p^j$, $a\in d_4+\p^{i+j-k}$. If $2i+j\geq n$ we take $d_1=1$, $d_2=0$ and $c\in \p^{2i+j-n}$ so $R_g\supset U_S$. If $2i+j\geq 2k$ we take $d_1=1$, $c=0$ and $d_2\in \p^{2i+j-2k}\subset \p^{i-k+1}$ so $R_g\supset U_S$. Conversely, suppose $2i+j\leq \min(n, 2k)-1$. Taking $c=d_2=0$, 
shows that $R_g$ contains the claimed subgroup. To see that equality holds, let $h\in \Kl{n}$ as in the previous part. Then $ghg^{-1}\in K$ if and only if the following conditions hold: \begin{eqnarray}
 a_1-d_4&\in& \p^{i+j-k} \nonumber \\
d_1-a_2&\in&  \p^{i+j-k} \nonumber \\
d_3-\varpi^k(b_1-b_5)&\in&  \p^{j}. \nonumber 
\end{eqnarray} The conclusion follows from the form of $ghg^{-1}$. 

c) Let $g = t_{i,j}Z_k$ and compute $$g \begin{bmatrix}
a & -ab & & \\
& d_1 & & \\
& d_3 & ad_4 & ad_4 b\\
& & & d_1d_4
\end{bmatrix} g^{-1} = \begin{bmatrix}
a & -\varpi^iab & & \\
& d_1 & & \\
-\varpi^{-i-j+k}ad_4b & \varpi^{-j}d_3 & ad_4 & \varpi^i ad_4 b \\
\varpi^{-2i-j+k}(a-d_1d_4) & -\varpi^{-i-j+k}ab & & d_1d_4
\end{bmatrix}.$$ Suppose that $k\leq i+j$. Taking $b\in \p^{i+j-k}$, $d_3\in \p^j$, $d_1=d_4=1$, and $a\in 1+\p^{2i+j-k}$ shows $R_g\supset U_S$. Conversely, suppose $i+j+1\leq k$. Taking $b=0$, $d_3\in \p^j$ and $a\in d_1d_4+\p^{2i+j-k}$ shows that $R_g$ contains the claimed subgroup. To see that equality holds, let $h\in \Kl{n}$ as in the previous part. Then $ghg^{-1}\in K$ if and only if the following conditions hold: \begin{eqnarray}
 d_3 &\in& \p^{j} \nonumber \\
\varpi^{n-k}c_3 + a_1-a_2 -\varpi^kb_3 &\in&  \p^{2i+j-k}. \nonumber 
\end{eqnarray} The form of $R_g$ now follows from the form of $ghg^{-1}$.  
\end{proof}

Now we will rule out some remaining double cosets from lying in $\Supp(\pi)$. For the inequalities appearing here, see Lemma \ref{double coset constraints}. 

\begin{lemma} Suppose $1\leq j$,  $1\leq \val(x)\leq i-1$, and $1\leq \val(y)\leq i+j-1$. 
 
a) If $1\leq \val(y/x)\leq j-1$ then $[t_{i,j} S(x,y,0)] \not\in \Supp(\pi)$.  

b) If $1\leq \val(z/x)\leq i+j-1$ then $[t_{i,j} S(x,0,z)] \not\in \Supp(\pi)$.  

c) If $1\leq \val(z)\leq 2i+j-1$ then $[t_{i,j} S(0,y,z)] \not\in \Supp(\pi)$.  
\end{lemma}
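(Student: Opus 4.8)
The plan is to argue as throughout this section: to show $[g]:=[t_{i,j}S(x,y,z)]\notin\Supp(\pi)$ it suffices to prove $\sigma^{R_g}=0$, and since $\sigma$ is cuspidal this holds as soon as $R_g$ contains a $K/K^+$-conjugate of $U_S$ or of $U_K$ (or, by Lemma~\ref{Klingen Levi}, of $\begin{bsmallmatrix}1&&\\&\SL_2(q)&\\ *&&1\end{bsmallmatrix}$). In each of the three cases I would exhibit an explicit unipotent subgroup $N\subseteq\Kl{n}$, conjugate it by $g=t_{i,j}S(x,y,z)$, impose the congruences forcing $gNg^{-1}\subseteq K$, and read off its image in $K/K^+$. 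Note that one cannot simply reduce to the one-coordinate cases already treated, since the stated valuation ranges are exactly those that Lemma~\ref{double coset constraints} cannot clear; a direct computation is required.

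The key mechanism is that conjugation by $S(x,y,z)$ spreads a root subgroup of $\Kl{n}$ into a product of root subgroups whose cross-terms carry factors of $x,y,z$ and of $x^2,y^2,xy$, and it is these factors that allow a subgroup of $\Kl{n}$ with entries deep in $\p$ to conjugate into a matrix with unit entries. A single root subgroup will not do: by the computation in Lemma~\ref{Main Support Restrictions}, $S(\varpi^nc_1,\varpi^nc_2,\varpi^nc_3)$ conjugates by $g$ to a matrix whose entries outside $\p^n$ are $\varpi^{n-i}c_1$, $\varpi^{n-i-j}c_2$ and $\varpi^{n-2i-j}(c_3-2c_2x+2c_1y)$, all of which lie in $\p$ once $i\le n-1$, $i+j\le n-1$ and $2i+j\le n-1$. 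So $N$ must be a several-parameter family mixing the upper- and lower-triangular parts of $\Kl{n}$. Concretely, in (a), with $g=t_{i,j}S(x,y,0)$, I would take for $N$ the part of $\Kl{n}$ supported on the lower Siegel block, $\begin{bsmallmatrix}1&&&\\ &1&&\\ \varpi^n a& b&1&\\ \varpi^n c& \varpi^n a&&1\end{bsmallmatrix}$ --- whose $g$-conjugate has its entries outside $K^+$ built from $\varpi^{-j}b$, $\varpi^{-i-j}(\varpi^n a-xb)$ and $\varpi^{-2i-j}(\varpi^n c-2x\varpi^n a+x^2 b)$ --- possibly enlarged by an extra one-parameter family to fill the remaining root direction, and use $1\le\val(x)\le i-1$ together with $1\le\val(y/x)\le j-1$ to choose $a,b,c$ so that $gNg^{-1}\subseteq K$ while a full conjugate of $U_S$ (or of the $\SL_2(q)$-subgroup of Lemma~\ref{Klingen Levi}) survives mod $\p$. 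For (b), $g=t_{i,j}S(x,0,z)$, the relevant cross-terms carry $x$ and $z$ and the hypotheses used are $1\le\val(x)\le i-1$ and $1\le\val(z/x)\le i+j-1$; for (c), $g=t_{i,j}S(0,y,z)$, they carry $y,z,y^2$ and one uses $1\le\val(y)\le i+j-1$ and $1\le\val(z)\le 2i+j-1$.

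The main obstacle will be choosing $N$ correctly and carrying out the valuation bookkeeping. Two features make this delicate. First, the surviving unit entries of $gNg^{-1}$ only assemble into the required $3$-dimensional unipotent when $N$ is a carefully balanced several-parameter family, after which one still has to recognize that unipotent as a conjugate of the particular $U_S$ or $U_K$ of the paper --- typically by conjugating with an explicit integral diagonal or Weyl element. Second, which of $U_S$, $U_K$, or the $\SL_2(q)$-plus-long-root subgroup of Lemma~\ref{Klingen Levi} one obtains depends on where $\val(x),\val(y),\val(z)$ sit inside their intervals (for instance whether $2\val(y)\le j$), so each part of the lemma will split into sub-cases, and all of the inequalities from Lemma~\ref{double coset constraints} must be used at full strength to eliminate every sub-case.
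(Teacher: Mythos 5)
Your overall strategy --- exhibit a subgroup $N\subseteq\Kl{n}$ with $gNg^{-1}\subseteq K$ whose image in $K/K^+$ contains a conjugate of $U_S$ or $U_K$ --- is the right one and is what the paper does. But the mechanism you propose for producing that subgroup does not work, and the concrete choice you make for part (a) provably cannot be repaired by "an extra one-parameter family" of unipotent type. Your $N=\left\{\begin{bsmallmatrix}1&&&\\ &1&&\\ \varpi^n a& b&1&\\ \varpi^n c& \varpi^n a&&1\end{bsmallmatrix}\right\}$ lies in the (abelian) lower Siegel unipotent radical, and $S(0,y,0)$ lies there too, so $S(x,y,0)NS(x,y,0)^{-1}$ is completely independent of $y$ --- as your own cross-terms $\varpi^{-i-j}(\varpi^na-xb)$ and $\varpi^{-2i-j}(\varpi^nc-2x\varpi^na+x^2b)$ already show. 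If this computation forced $\sigma^{R_g}=0$ it would do so equally for $y=0$, i.e.\ it would show $[t_{i,j}X_k]\notin\Supp(\pi)$ for all $1\leq k\leq i-1$, contradicting the earlier Proposition that $[t_{i,j}X_k]\in\Supp(\pi)$ when $i+j+k\leq n-1$. Quantitatively the obstruction is that your cross-terms have the wrong sign of valuation: requiring the $(3,2)$-entry $\varpi^{-j}b$ to be a unit forces $\val(b)=j$, whence $\varpi^{-i-j}xb$ has valuation $\val(x)-i\leq -1$ and the $(3,1)$-entry leaves $K$; working through all the integrality constraints (say when $n>2i+j$) one finds that the image of $gNg^{-1}\cap K$ in $K/K^+$ is only the long-root subgroup, which does not kill a generic cuspidal $\sigma$.

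The missing idea is that the relevant family in $\Kl{n}$ is not unipotent: the paper conjugates elements $\begin{bsmallmatrix}d_1d_4&&&\\ &d_1&&\\ &d_3&d_4&\\ &&&1\end{bsmallmatrix}$ whose \emph{diagonal} entries are taken in $1+\p$ but of the specific form $d_1=1+\tfrac{m_2}{y}+\tfrac{d_3x}{y}$, $d_4=1+\tfrac{m_2}{y}+\tfrac{m_3}{xy}$ with $m_2\in\p^{i+j}$, $m_3\in\p^{2i+j}$, $d_3\in\p^j$ free. It is the commutator of the torus part with $S(x,y,0)$ that produces entries like $\varpi^{-i-j}m_2$ and $\varpi^{-2i-j}m_3$ with arbitrary residues at the $U_S$ positions, and the hypotheses $\val(x)\leq i-1$, $\val(y)\leq i+j-1$, $\val(y/x)\leq j-1$ are used precisely to guarantee $\tfrac{m_2}{y},\tfrac{d_3x}{y},\tfrac{m_3}{xy}\in\p$ so that these elements lie in $\Kl{n}$. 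Parts (b) and (c) go the same way (with $d_1-1$, $d_4-1$ scaled by $1/z$ or $1/x$, landing in $U_K$ for (b) and $U_S$ for (c)); no appeal to Lemma~\ref{Klingen Levi} and no sub-case analysis on $2\val(y)$ versus $j$ is needed. As written, your proposal identifies neither this family nor the role of the hypotheses, so it has a genuine gap rather than being a variant proof.
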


\begin{proof}
a) Let $g=t_{i,j}S(x,y,0)$, $d_3\in \p^j$, $m_2\in \p^{i+j}$, and $m_3\in \p^{2i+j}$. Set $d_1 = 1+\frac{m_2}{y}+\frac{d_3 x}{y}\in 1+\p$, $d_4 = 1+\frac{m_2}{y}+ \frac{m_3}{xy}\in 1+\p$ and compute $$g \begin{bmatrix}
d_1d_4 & & & \\
& d_1 & & \\
& d_3 & d_4 & \\
& & & 1
\end{bmatrix} g^{-1} = \begin{bmatrix}
d_1d_4 & & & \\
d_1m & d_1 & & \\
\varpi^{-i-j}(d_1m_2+(d_1-1)\frac{m_3}{x}) & \varpi^{-j}d_3 & d_4 & \\
\varpi^{-2i-j}m_3 & \varpi^{-i-j}m_2 & -m & 1
\end{bmatrix}$$ where $m=\frac{m_3}{\varpi^i y}+\frac{m_2 x}{\varpi^i y}\in \p$. Thus $R_g\supset U_S$.


b) Let $g=t_{i,j}S(x,0,z)$ and compute $$g \begin{bmatrix}
d_1d_4 & & & \\
& d_1 & & \\
& d_3 & d_4 & \\
& & & 1
\end{bmatrix} g^{-1} = \begin{bmatrix}
d_1d_4 & & & \\
\varpi^{-i}xd_1(d_4-1) & d_1 & & \\
-\varpi^{-i-j}xd_3 & \varpi^{-j}d_3 & d_4 & \\
\varpi^{-2i-j}( z(d_1d_4-1) +x^2d_3 ) & -\varpi^{-i-j}  xd_3 & \varpi^{-i}x(1-d_4) & 1
\end{bmatrix}.$$ Taking $d_3\in \p^{i+j-\val(x)}\subset \p^{j+1}$, $d_4\in 1+\p^{i-\val(x)}$, and $d_1\in d_4^{-1} \left(1 -x^2d_3/z \right) + \p^{2i+j-\val(z)}$ shows $R_g\supset U_K$. 

c)  Let $g=t_{i,j}S(0,y,z)$ and compute $$g \begin{bmatrix}
d_1d_4 & & & \\
& d_1 & & \\
& d_3 & d_4 & \\
& & & 1
\end{bmatrix} g^{-1} = \begin{bmatrix}
d_1d_4 & & & \\
 & d_1 & & \\
\varpi^{-i-j}yd_4(d_1-1) & \varpi^{-j}d_3& d_4 & \\
\varpi^{-2i-j} z(d_1d_4-1)  & \varpi^{-i-j}y(d_1-1) &  & 1
\end{bmatrix}.$$ Taking $d_3\in \p^{j}$, $d_1\in 1+\p^{i+j-\val(y)}$, and $d_4\in d_1^{-1} + \p^{2i+j-\val(z)}$ shows $R_g\supset U_S$. 
\end{proof}

There is one remaining family of double cosets to consider. 

\begin{lemma} \label{last family of double cosets}
Suppose $1\leq \val(x)<i$, $1\leq \val(y/x)< j$, and $1\leq \val(z/y)$. Let $g = t_{i,j}S(x,y,z)$. 

a) We have $[g]\in \Supp(\pi)$ if and only if the following conditions hold: 

i) $i+j< i + \val(y+z/x) =j+\val(xz/y) <n$. 




ii) $j<2\val(y/x)$. 

b) If conditions i) and ii) hold then $R_g$ is conjugate (by a diagonal matrix) to $$\left\{ \begin{bmatrix}
a & & & \\
v & a & & \\
b & & a & \\
 c & b & -v & a
\end{bmatrix}\begin{bmatrix}
1 & & & \\
 & 1 & & \\
& v & 1 & \\
& &  & 1
\end{bmatrix} : a\in \Fq^\times, b, c, v\in \Fq \right\}.$$
\end{lemma}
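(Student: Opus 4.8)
The plan is to follow the same conjugation-and-reduction template used throughout Section~4: pick a convenient one-parameter (or few-parameter) family of elements $h\in\Kl{n}$, compute $ghg^{-1}$ explicitly, and read off either (a) that $R_g$ contains a conjugate of $U_S$ or $U_K$ — forcing $[g]\notin\Supp(\pi)$ — or (b) that $R_g$ contains the asserted subgroup, and then that no larger subgroup of $K/K^+$ can occur. Since $g=t_{i,j}S(x,y,z)$ with all three of $x,y,z$ nonzero, the clean reductions of Lemma~\ref{double coset constraints} no longer eliminate variables, so one must work with the full matrix $S(x,y,z)$. I expect the useful substitution will again be conjugating the Klingen Levi-type block $\begin{bsmallmatrix} d_1d_4 & & & \\ & d_1 & & \\ & d_3 & d_4 & \\ & & & 1\end{bsmallmatrix}$ together with the ``$v$''-type unipotent $\begin{bsmallmatrix} 1 & & & \\ & 1 & & \\ & v & 1 & \\ & & & 1\end{bsmallmatrix}$ and possibly a $C(d)$- or $B(d)$-type element, since those are exactly the matrices that survive the combined constraints $1\le\val(x)<i$, $1\le\val(y/x)<j$, $1\le\val(z/y)$.

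For part b) (the easy direction), assuming i) and ii) hold, I would exhibit explicit elements of $\Kl{n}$ whose conjugates by $g$ generate the displayed group: the parameter $a$ comes from an integral diagonal matrix (so $a\in\OF^\times$, reduced mod $\p$), $b$ from an $A(d)$-type matrix with $d$ in a suitable power of $\p$ so that both the $(3,1)$ and $(4,2)$ entries land in $\OF$ exactly when $i+j+1\le\val(y+z/x)+i$, the $c$ from a $Z_k$-type matrix (controlled by $\val(xz/y)<n$, i.e. the $(4,1)$ entry being integral), and the $v$-entry from the Klingen-Levi block with $\val(d_3)=j$ combined with the constraint $j<2\val(y/x)$ which is exactly what keeps the remaining cross terms out of a too-small power of $\p$. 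The coupling ``$v$ appears both in the unipotent position $(2,1)$ (up to sign in $(4,3)$) and in the Levi position $(3,2)$'' is the signature that $\val(y/x)$ and $\val(xz/y)$ conspire; the two copies of $v$ are what prevents $R_g$ from being abelian, so the key computation is checking that a single Klingen element produces both simultaneously. For the reverse implication and the maximality (``if $R_g$ is any bigger then $\sigma^{R_g}=0$''), I would take a general $h=\begin{bsmallmatrix}a_1 & b_1 & b_2 & b_3 \\ \varpi^nc_1 & d_1 & d_2 & b_4 \\ \varpi^nc_2 & d_3 & d_4 & b_5 \\ \varpi^nc_3 & \varpi^nc_4 & \varpi^nc_5 & a_2\end{bsmallmatrix}\in\Kl{n}$, impose $ghg^{-1}\in K$, and extract the list of congruence conditions (as in the proof of the previous Proposition); the constraints i) and ii) are precisely the inequalities among $i,j,\val(x),\val(y),\val(z)$ that make certain of these conditions force $d_2,d_3$-type entries into high powers of $\p$, pinning $ghg^{-1}$ into the claimed shape, while their negations make one of those conditions collapse and let a conjugate of $U_S$ or $U_K$ slip in.

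The main obstacle I anticipate is bookkeeping: with all of $x,y,z\ne0$ the entries of $ghg^{-1}$ are rational expressions in $x,y,z$ whose valuations must be compared against $i,i+j,2i+j$ and against $n$, and the decisive quantity $\val(y+z/x)$ is a \emph{sum}, not obviously equal to either $\val(y)$ or $\val(z/x)$ — so one has to track the (non)cancellation in $y+z/x$ carefully, which is why the statement is phrased via $\val(y+z/x)$ rather than $\min(\val y,\val(z/x))$. Establishing the chain of equalities/inequalities in i), namely $i+j<i+\val(y+z/x)=j+\val(xz/y)<n$, will require combining $\val(xz/y)=\val(x)+\val(z)-\val(y)$ with $\val(y+z/x)=\val(xy+z)-\val(x)$ and noting $\val(xy+z)$ relates to $\val(z)$ since $\val(xy)>\val(z)$ would contradict $1\le\val(z/y)$ while $\val(xy)<\val(z)$ is the remaining sub-case; sorting out these sub-cases (cancellation vs.\ no cancellation) cleanly is the delicate part, and I would handle it by first normalizing $z$ modulo the reductions already available so that $\val(z)=\val(xy+z)$ can be assumed, reducing to a single generic case.
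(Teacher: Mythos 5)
Your overall strategy --- conjugating the Klingen--Levi-type block $\begin{bsmallmatrix} d_1d_4 & & & \\ & d_1 & & \\ & d_3 & d_4 & \\ & & & 1\end{bsmallmatrix}$ through $g$, splitting into cases according to $\val(z)$ versus $\val(xy)$ to control the cancellation in $y+z/x$, and then pinning down $R_g$ exactly by imposing $ghg^{-1}\in K$ on a general $h\in\Kl{n}$ --- is the same as the paper's, and you have correctly identified the two delicate points (the non-additivity of $\val(y+z/x)$, and the fact that a single Klingen element must produce both occurrences of $v$ simultaneously).

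There is, however, one genuine gap in the ``if'' direction of part a). Showing that $R_g$ equals the group displayed in part b) does not by itself put $[g]$ in $\Supp(\pi)$: membership means $\sigma^{R_g}\neq 0$, and for this particular $R_g$ that is not automatic --- it is precisely the content of Lemma \ref{Last character computation} (which gives $\dim\sigma^{R_g}=q-1$ for generic cuspidal $\sigma$), and the paper's proof explicitly invokes it at this point. Your proposal treats ``$R_g$ contains no conjugate of $U_S$ or $U_K$ and equals the claimed group'' as sufficient for membership, which it is not without that character-table input. Separately, be aware that what you have written is a roadmap rather than a proof: the decisive steps --- e.g.\ that in each of the three cancellation regimes the failure of $i+\val(y+z/x)=j+\val(xz/y)$ lets one solve for $m_1\in\p^{i+1}$ (giving $U_S$) or for $d_3\in\p^{j+1}$ (giving $U_K$), and that the conditions $i+\val(y+z/x)>i+j$ and $j+\val(xz/y)<n$ are each forced by further explicit conjugations --- are exactly where the content of the lemma lives, and none of them is carried out.
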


\begin{proof} Suppose first that $[g]\in \Supp(\pi)$.

Proof of i): Let $m_1\in \p^i$, $m_2\in \p^{i+j}$, $d_3\in \p^{j}$, $d_1= 1-\frac{m_1}{x}$, $d_4=1-\frac{m_2}{y} - \frac{d_3 x}{y}$, and compute 
$$g\begin{bmatrix}
1 & & & \\
& d_1 &  & \\
& d_3 & d_4 & \\
 & & & d_1d_4
\end{bmatrix}g^{-1}  = \begin{bmatrix}
1 & & & \\
\varpi^{-i}m_1 & d_1 &  & \\
\varpi^{-i-j}m_2 & \varpi^{-j}d_3 & d_4 & \\
\varpi^{-2i-j}m_3 & \varpi^{-i-j}m_4 & -\varpi^{-i}d_4m_1 & d_1 d_4
\end{bmatrix}$$ where \begin{eqnarray}
m_3 &=& m_2\left(\frac{z}{y}-x\right) - \frac{m_1m_2z}{xy}  + \frac{d_1d_3xz}{y} +  m_1\left(y+ \frac{z}{x}\right) \nonumber\\
m_4 &=& m_2 - \frac{m_1(m_2+xd_3)}{x}\nonumber.
\end{eqnarray}

 If $\val(z)>\val(xy)$ and $i < j + \val(zx/y^2) $, then taking  $$m_1 \in \left(1+\frac{z}{xy}(1 - \frac{d_3 x}{y} - m_2) \right)^{-1}  \left( \frac{m_2 x}{y}(1-\frac{z}{xy}) - \frac{d_3 xz}{y^2} \right) + \p^{2i+j-\val(y)}$$ yields $m_1\in \p^{i+1}$ and so $R_g \supset U_S$. Note that if $\val(z/y)\geq i$ then $\val(z)>\val(xy)$ and $i<j+ \val(zx/y^2)$ since $\val(x)<i$ and $j + \val(zx/y^2) = j - \val(y/x) + \val(z/y) >i$. So we must have $\val(z/y)<i$. 

On the other hand, if $\val(z)>\val(xy)$ and $j < i + \val(y^2/xz)$ then taking $$d_3 \in  \left(1-\frac{m_1}{x}\right)^{-1} \left( \frac{m_1m_2}{x^2}-\frac{m_2 x}{y} \frac{y^2}{xz}(1-\frac{z}{xy})  - \frac{y^2 m_1}{xz}(1+\frac{z}{xy})  \right) + \p^{2i+j +\val(y/xz)}$$ yields $d_3\in \p^{j+1}$ so $R_g \supset U_K$. Therefore if $\val(z)> \val(xy)$ we must have $i +\val(y)= j + \val(xz/y)$. 

If $\val(z)<\val(xy)$ and $i<j+\val(x^2/y)$ then taking $$m_1 \in \left( 1+\frac{yx}{z}-\frac{d_3 x+m_2}{y}  \right)^{-1}\left( - \frac{m_2 x}{y}(1-\frac{xy}{z}) -\frac{d_3 x^2}{y}   \right) + \p^{2i+j+\val(x/z)}$$ yields $m_1\in \p^{i+1}$ and so $R_g \supset U_S$.

On the other hand, if $\val(z)<\val(xy)$ and $j<i+\val(y/x^2)$ then taking $$d_3 \in  \left(1-\frac{m_1}{x}\right)^{-1} \left( \frac{m_1m_2}{x^2}-\frac{m_2}{x} (1-\frac{xy}{z})  - \frac{m_1y}{x^2}(1+\frac{xy}{z})  \right)  +\p^{2i+j +\val(y/xz)}$$ yields $d_3\in \p^{j+1}$ so $R_g \supset U_K$. Therefore if $\val(z) <\val(xy)$ we must have $i+\val(y) = j+\val(x^2)$. 

If $\val(z)=\val(xy)$ and $i + \val(1+\frac{z}{xy}) < j + \val(x^2/y)  = j+\val(xz/y^2)$ then taking $$m_1 \in \left( 1- (1+\frac{z}{xy})^{-1}( \frac{d_3z}{y^2}+\frac{m_2 z}{xy^2} )  \right)^{-1}  (1+\frac{z}{xy})^{-1}  \left(    \frac{m_2x }{y}(1-\frac{z}{xy}) -\frac{d_3 xz}{y^2}   \right) + \p^{2i+j-\val(y+\frac{z}{x}) }$$ yields $m_1\in \p^{i+1}$ and so $R_g \supset U_S$.

On the other hand, if $\val(z)=\val(xy)$ and $j<i+\val(y/x^2) + \val(1+\frac{z}{xy})$  then taking $$d_3 \in  \left(1-\frac{m_1}{x}\right)^{-1} \left( \frac{m_1m_2}{x^2}-\frac{m_2}{x} (1-\frac{xy}{z})  - \frac{m_1y^2}{xz}(1+\frac{z}{xy})  \right)  +\p^{2i+j +\val(y/xz)}$$ yields $d_3\in \p^{j+1}$ so $R_g \supset U_K$. Therefore if $\val(z) = \val(xy)$ we must have $i+\val(y) + \val(1 + \frac{z}{xy}) = j+\val(x^2)$. 

We have shown that $i + \val(y+z/x) =j+\val(xz/y)$ must hold in all cases. Suppose next that $i\geq \val(xz/y)$. Then $j\geq \val(y+z/x) $. The proof so far makes clear that $$R_g \supset  \left\{ \begin{bmatrix}
a & & & \\
& a & & \\
b & & a & \\
 c & b & & a
\end{bmatrix} : a\in \Fq^\times, b, c\in \Fq \right\}.$$ Let $m\in \p^j$, $b = \frac{xm} {xy+z}$, $d_1= 1-bx$ and $d_3 = d_1^{-1}(m \left(1-\frac{bz}{y}\right) + b^2 z + b y(b x-2))$. We compute $$g\begin{bmatrix}
1 & b & & \\
& d_1 & & \\
& d_3 & 1 & -b \\
& & & d_1 
\end{bmatrix} g^{-1} = \begin{bmatrix}
d_1 & \varpi^i b & & \\
& 1 & &  \\
& \varpi^{-j}m & d_1 & -\varpi^i b\\
& & & 1
\end{bmatrix}$$ and conclude that $R_g\supset U_S$.

Finally, suppose $n\leq j+\val(xz/y) $. Then taking $d_3\in \p^j$, $d_4=1 - \frac{d_3 x}{y}$ as above,  we compute $$g\begin{bmatrix}
1 & & & \\
& 1 & & \\
& d_3 & d_4 & \\
 \frac{-d_3xz}{y}   & & & d_4
\end{bmatrix}g^{-1}= \begin{bmatrix}
1 & & & \\
& 1 & & \\
 & \varpi^{-j} d_3 & d_4 & \\
 &  &  & d_4
 \end{bmatrix}$$ and conclude that $R_g\supset U_S$.


Proof of ii): Suppose $\val(y^2/x^2)\leq  j$. Take $d\in \p^j$ and compute $$g\begin{bmatrix}
1 & & & \\
& 1+ dx/y & -d x^2/y^2 & \\
& d & 1- d x/y & \\
 & & & 1
\end{bmatrix}g^{-1} = \begin{bmatrix}
1 & & & \\
& 1+ dx/y & -\varpi^{j} d x^2/y^2  & \\
 & \varpi^{-j}d & 1- d x/y & \\
 &  &   & 1
\end{bmatrix}.$$ Together with the above, this yields $R_g\supset U_S$. 

Conversely, suppose that i) and ii) hold. Take $m\in \p^i$, $d_1 = 1-\frac{m}{x}\in 1+\p$, $d_3 = -m(y+\frac{z}{x})\frac{y}{d_1xz}  \in \p^j$, and $d_4 = 1-\frac{d_3x}{y}\in 1+\p$. Then $$g\begin{bmatrix} 1 & & & \\
& d_1 & & \\
& d_3 & d_4 & \\
& & & d_1d_4
\end{bmatrix}g^{-1} = \begin{bmatrix}
1 & & & \\
\varpi^{-i}m & d_1 & & \\
0 & \varpi^{-j} d_3  & d_4 & \\
 0 & -\varpi^{-i-j}md_3 & -\varpi^{-i}d_4m  & d_1 d_4
\end{bmatrix}$$ which shows that $R_g$ contains a conjugate of the claimed subgroup. We will show that equality holds. Then Lemma \ref{Last character computation} below shows that $\dim \sigma^{R_g}>0$. 

Observe that $\val(z)\geq i+1 + \val(y/x) \geq i+1 + \frac{j+1}{2}$ and $\val(y) \geq \frac{j+1}{2}+\val(x)$ so $\val(yz)>i+j$, $\val(y^2)>j$, $\val(z^2)>2i+j$ and $\val(xz)>i$. It follows that 
$$g\begin{bmatrix}
1 & & & b \\
& 1 &  &\\
& & 1 & \\
& & & 1
\end{bmatrix}g^{-1} \in K^+$$ for all $b\in \OF$. Since $i+j<n$, we also have $$g\begin{bmatrix}
1 & & & \\
\varpi^n c_1 & 1 & & \\
\varpi^n c_2 & & 1 & \\
& \varpi^n c_2 & -\varpi^n c_1 & 1
\end{bmatrix}g^{-1}\in \begin{bmatrix}
1 & & & \\
& 1 & & \\
& & 1 & \\
\p^{n-2i-j}  & & & 1
\end{bmatrix}K^+$$ for all $c_1, c_2\in \OF$. So to determine $R_g$ we need to consider $$h:= g \begin{bmatrix}
1 & & & \\
& d_1 & d_2 & \\
& d_3 & d_4 & \\
\varpi^n c & & & d_1 d_4 - d_2 d_3 
\end{bmatrix}\begin{bmatrix}
1 & b_1 & b_2 & \\
& 1 & & b_2 \\
& & 1 & -b_1 \\
& & & 1
\end{bmatrix} g^{-1}.$$

Suppose first that $\val(y)> j$. The condition $h
\in K$ forces $d_3\in \p^j$, as well as the existence of $m_1\in \p^i$ and $m_2\in \p^{i+j}$ such that \begin{eqnarray}
d_1 &=& 1 -b_1 x-b_2 y -\frac{d_2y}{x} - \frac{m_1}{x} \nonumber\\
d_4 &=& \left(1-\frac{b_1 z}{y}\right)^{-1}(1-b_1 x - b_2 y - \frac{d_3 x}{y} -\frac{m_2}{y}  ) .\nonumber
\end{eqnarray} One then only needs the $(4,1)$ entry in $h$ to lie in $\OF$. It can be checked that this forces $\val(\varpi^{-i}m_1)=\val(\varpi^{-j}d_3)$, and the form of $R_g$ follows. 

Now suppose that $\val(y) \leq j$. This forces $\val(z)=\val(xy)$ and $\val(1+\frac{z}{xy})>0$, which also implies $\val(x^2)>i$ and $\val(y^2)>i+j$.  The condition $h\in K$ then forces the existence of $m_6\in \p^j$ and $m_1, m_2$ as above with 
\begin{eqnarray}
d_1 &=& 1  -\frac{d_2y}{x} - \frac{m_1}{x} \nonumber\\
d_4 &=& \left(1-\frac{b_1 z}{y}\right)^{-1}(1-b_1 x  - \frac{d_3 x}{y} -\frac{m_2}{y}  ) \nonumber \\
d_3 &=& \left(1-b_2 y\right)^{-1}(m_6 - b_1y (1+d_4)). \nonumber 
\end{eqnarray} One then only needs the $(4,1)$ entry in $h$ to lie in $\OF$. It can be checked that this forces $\val(\varpi^{-i}m_1)=\val(\varpi^{-j}d_3)$, and the form of $R_g$ follows. 
\end{proof}

We now make the following observation.

\begin{theorem}\label{Nongenerics from K}
Suppose $\pi = \cInd_{ZK}^G(\sigma)$ is a nongeneric depth zero supercuspidal irreducible representation of $G$. Then $\pi^{\Kl{n}}=0$ for all $n\geq 0$.
\end{theorem}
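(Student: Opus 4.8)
The plan is to combine the structural results of this section with the master formula
$$\dim \pi^{\Kl{n}} = \sum_{[g]\in \Supp(\pi)} \dim \sigma^{R_g}$$
and show that every term on the right vanishes. The first step is to pass from $\pi$ to $\sigma$: for a depth zero supercuspidal $\pi=\cInd_{ZK}^G(\sigma)$, the representation $\pi$ is generic if and only if the cuspidal representation $\sigma$ of $K/K^+=\GSp(4,\Fq)$ is generic, so the hypothesis forces $\sigma$ to be a nongeneric cuspidal representation of $\GSp(4,\Fq)$. The second step is to collect, from Lemma \ref{double coset constraints}, Lemma \ref{Main Support Restrictions}, Proposition \ref{boundary cases imply diagonal}, the two propositions describing $R_{t_{i,j}}$ and $R_{t_{i,j}X_k}, R_{t_{i,j}Y_k}, R_{t_{i,j}Z_k}$, and Lemma \ref{last family of double cosets}, the complete finite list of double cosets that can lie in $\Supp(\pi)$, together with the group $R_g$ attached to each, determined up to conjugacy in $\GSp(4,\Fq)$. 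It then suffices to prove $\sigma^{R}=0$ for every group $R$ on this list when $\sigma$ is nongeneric.

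I would then split the list in two. For the larger groups --- the Klingen Levi subgroup, the diagonal cases whose $R_g$ contains an $\SL_2(q)$ block, and any $R_g$ containing a conjugate of $U_S$ or $U_K$ --- the vanishing $\sigma^{R}=0$ holds for every cuspidal $\sigma$, by cuspidality together with Lemma \ref{Klingen Levi}; genericity is irrelevant there, and several of these cases were already disposed of in the preceding propositions. Each remaining $R$ is conjugate to a subgroup of a Borel subgroup of $\GSp(4,\Fq)$; writing $N$ for its unipotent radical one has $\sigma^{R}\subseteq\sigma^{N}$, so it is enough to show $\sigma^{N}=0$. This is where non-genericity enters: beyond the vanishing $\sigma^{U_S}=\sigma^{U_K}=0$ shared by all cuspidals, a nongeneric cuspidal of $\GSp(4,\Fq)$ also has no invariant vectors under a certain proper unipotent subgroup of each of $U_S$ and $U_K$ --- only nondegenerate characters of these groups occur in its restriction --- and one checks, using the character table of $\GSp(4,\Fq)$ (cf.\ \cite{Yi2021}), that each $N$ arising above contains, after the diagonal conjugations already recorded, a conjugate of such a subgroup, so $\sigma^{N}=0$. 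An equivalent and possibly more efficient route is to observe that the character computations of the next subsection (Lemma \ref{Last character computation} and its companions) express each $\dim\sigma^{R_g}$ as an inner product of $\chi_\sigma$ with an induced character, and that each such inner product evaluates to $0$ on a nongeneric $\sigma$.

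The main obstacle is the last family, Lemma \ref{last family of double cosets}(b): there $R_g$ is conjugate to a genuinely mixed torus--unipotent subgroup that is not conjugate to the unipotent radical of any proper parabolic, so the vanishing cannot be read off from cuspidality alone, and one must actually compute $\langle\chi_\sigma,\ \Ind_{R_g}^{\GSp(4,\Fq)}1\rangle$ and check that non-genericity of $\sigma$ forces it to be $0$; Lemma \ref{Last character computation} shows this inner product is positive for a generic $\sigma$, so this is precisely where the generic and nongeneric cases diverge. A secondary, bookkeeping obstacle is to be sure that the enumeration of $\Supp(\pi)$ assembled above is exhaustive, so that no double coset contributing a nonzero term for generic $\pi$ has been overlooked.
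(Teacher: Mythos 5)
Your overall strategy --- reduce to the statement that $\sigma$ is a nongeneric cuspidal representation of $\GSp(4,\Fq)$, then kill every $\dim\sigma^{R_g}$ by a character-table vanishing on a unipotent subgroup sitting inside $R_g$ --- is exactly the paper's, and it would work. But the paper's execution is far more uniform: it isolates a single criterion (Lemma \ref{nongeneric character lemma}), namely that a nongeneric cuspidal $\sigma$ satisfies $\sigma^R=0$ already for $R$ the one-parameter long-root subgroup $\{1+xE_{14}: x\in\Fq\}$, and then simply observes that \emph{every} $R_g$ attached to a potential element of $\Supp(\pi)$ --- including the Klingen Levi (whose $\GL(2)$ block contains the long-root subgroup in positions $(2,3)$, $(3,2)$) and the mixed torus--unipotent group of Lemma \ref{last family of double cosets}(b) (via its $(4,1)$ entry) --- contains a conjugate of this one subgroup. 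This makes your case division, and in particular the ``main obstacle'' you flag for the last family, unnecessary: no inner product with $\Ind_{R_g}^{\GSp(4,\Fq)}1$ need be computed there. Two inaccuracies in your write-up are worth correcting. First, you cannot dispose of the Klingen Levi case ``by cuspidality together with Lemma \ref{Klingen Levi}'': that lemma is stated only for \emph{generic} $\sigma$, and cuspidality alone does not force $\sigma^M=0$ (indeed $\dim\sigma^M=2$ for $\chi_4(k,l)$ and $X_5(\Lambda,\omega)$); this case genuinely needs nongenericity, which the long-root criterion supplies. Second, the parenthetical ``only nondegenerate characters of these groups occur in its restriction'' is garbled --- the relevant fact is simply that the trivial character of the long-root subgroup does not occur in $\sigma|_{R}$, which is read off from the character table.
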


\begin{proof}
The condition on $\pi$ is equivalent to $\sigma$ being a nongeneric cuspidal irreducible representation of $\GSp(4, \Fq)$. For all the subgroups $R_g$ associated to potential $g\in \Supp(\pi)$, we have seen that $R_g$ contains a conjugate of the subgroup $\left\{ \begin{bsmallmatrix}
1 & & & x \\
& 1 & & \\
& & 1 & \\
& & & 1
\end{bsmallmatrix}: x\in \Fq \right\}$. So $\sigma^{R_g}=0$ by Lemma \ref{nongeneric character lemma}.
\end{proof}

\begin{corollary}
Suppose $\pi$ is a nongeneric depth zero supercuspidal irreducible representation of $G$. Then $\pi^{\Kl{n}}=0$ for all $n\geq 0$.
\end{corollary}

\begin{proof}
This follows from Theorems \ref{Nongeneric Main Theorem} and \ref{Nongenerics from K}. 
\end{proof}

\subsection{Counting double cosets}

We have determined the necessary and sufficient conditions on $[t_{i,j}S(x,y,z)]$ to lie in $\Supp(\pi)$. It remains to consider when two such double cosets are equal, and then to enumerate $\Supp(\pi)$. 

The first column of Table 1 below lists the families of double coset representatives. The conjugacy class of $R_g$ in $\GSp(4, \Fq)$ is an invariant of $[g]$. From the $R_g$ column in Table 1, one sees that the only potential equalities of double cosets occur within a row or else between those of the form $[t_{i,j}X_k]$ and $[t_{i',j'}Y_{k'}]$. Suppose now that $j\geq 0$. If $i\leq 0$ then it is not hard to verify that $[t_{i,j}]$ determines $i$ and $j$ directly; see also Remark \ref{paramodular remark}. So assume $i\geq 1$. By the Cartan decomposition and the fact that $\Kl{n}\subset K$, the double coset $[t_{i,j}S(x, y, z)]$ determines $i$ and $j$. This takes care of rows 2, 3 and 4 of Table 1. It is easy to check by a direct matrix computation that, for the parameters appearing in rows 5, 6, and 7, $[t_{i,j}X_k]$, $[t_{i,j}Y_k]$, and $[t_{i,j}Z_k]$ each determine $k$ uniquely as well. It is also not hard to show directly that $[t_{i,j}X_k]\neq [t_{i,j}Y_{k'}]$, so there is no overlap between rows 6 and 7. 

\begin{remark} \label{paramodular remark}
The functions supported on the double cosets $ZKt_{i,j}\K{n}$, where $\K{n}$ is the paramodular group of level $\p^n$, and $t_{i,j}$ comes from the first row of Table 1, provide a full set of basis vectors for the space of $\K{n}$-fixed vectors in $\pi$. Since $\K{n}\supset \Kl{n}$, this provides a different way to show that $[t_{i,j} ]$ determines these $i$ and $j$. 
\end{remark}

This just leaves the last family of double cosets $[g]=[t_{i,j}S(x,y,z)]$ considered in Lemma \ref{last family of double cosets}. It is easy to show by direct computation that $[g]$ determines $\val(x)$, $\val(y)$, and $\val(z)$. Using integral diagonal matrices, we may assume that $x$ and $y$ are powers of $\varpi$. So it remains only to consider what conditions on a unit $u\in \OF^\times$ are necessary and sufficient to allow $[t_{i,j}S(x, y, uz )] = [t_{i,j}S(x, y, z )].$

\begin{lemma} \label{distinction for S(k)}
Let $[g]:=[t_{i,j} S(x, y, uz)]$ and $[g_1]:=[t_{i,j} S(x, y, z)]$ lie in $\Supp(\pi)$, where $[g_1]$ satisfies the conditions of Lemma \ref{last family of double cosets} and $u\in \OF^\times$. Then $[g]=[g_1]$ if and only if $u\in 1+\p^{j-\val(y/x)}$. 
\end{lemma}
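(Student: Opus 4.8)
The strategy is to test equality of the two double cosets by conjugating $g$ by the appropriate unipotent and diagonal elements of $\Kl{n}$ and seeing which unit ratios $u$ can be absorbed. Since Lemma \ref{last family of double cosets} already gives us a description of $R_{g_1}$ (and hence, by invariance of the conjugacy class of $R_g$, $R_g\cong R_{g_1}$ whenever $[g]\in\Supp(\pi)$), the real question is the finer one of when $ZK g_1 \Kl{n} = ZK g \Kl{n}$. I would write $g = t_{i,j}S(x,y,uz)$ and $g_1 = t_{i,j}S(x,y,z)$ and look for $\kappa\in\Kl{n}$ and $k\in K$ (times a central element) with $g = k\, g_1\, \kappa$, equivalently $g_1^{-1}k^{-1}g \in \Kl{n}$, i.e. $S(x,y,z)^{-1}\, t_{i,j}^{-1}k^{-1}t_{i,j}\, S(x,y,uz)\in \Kl{n}$.

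\textbf{Key steps.} First I would reduce, using integral diagonal matrices as in the paragraph preceding the lemma, to the situation where $x=\varpi^{\val(x)}$ and $y = \varpi^{\val(y)}$ are literal powers of $\varpi$, so that only the unit in the $z$-slot is in play; this is already granted by the text. Second, the natural family of auxiliary elements to conjugate by is the one that appeared repeatedly in the proof of Lemma \ref{last family of double cosets}, namely matrices of the form $\mathrm{diag}$-blocks $\begin{bsmallmatrix}1&&&\\ &d_1&&\\ &d_3&d_4&\\ &&&d_1d_4\end{bsmallmatrix}$ with $d_1,d_4\in 1+\p$ and $d_3\in\p^j$: conjugating $g_1$ by such an element changes $S(x,y,z)$ to $S(x', y', z')$ with the same valuations but with the unit in the $z$-entry multiplied by a factor that, tracking the formulas for $m_3, m_4$ in that proof, lies in $1 + \p^{\,?}$. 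The point is to compute exactly which subgroup of $\OF^\times$ arises as the set of achievable multipliers; I expect the key relation to be that conjugating by the element with $d_3 = \varpi^{j}\cdot(\text{unit})$ and $d_1 = 1 - m_1/x$ forces, via the constraint $\val(\varpi^{-i}m_1) = \val(\varpi^{-j}d_3)$ already derived in the proof of Lemma \ref{last family of double cosets}, a shift of the $z$-unit by an element of $1+\p^{\,j-\val(y/x)}$. Third, I would check the two directions separately: (i) given $u\in 1+\p^{\,j-\val(y/x)}$, exhibit the explicit $\kappa$ and $k$ realizing $[g]=[g_1]$ — this is a direct matrix computation using the formulas already set up; (ii) conversely, if $[g]=[g_1]$ then any witnessing $k\in K$, $\kappa\in\Kl{n}$ must, after examining the entries of $g_1^{-1}k^{-1}g$ (in particular the lower-left block, where the factors $\varpi^{-i}$, $\varpi^{-i-j}$, $\varpi^{-2i-j}$ amplify the off-diagonal entries), satisfy enough congruences to force $u-1\in\p^{\,j-\val(y/x)}$. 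Condition ii) of Lemma \ref{last family of double cosets}, namely $j < 2\val(y/x)$, guarantees that $j - \val(y/x) \geq 1$, so the exponent is positive and the statement is nonvacuous.

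\textbf{Main obstacle.} The hard part will be the converse direction (ii): extracting from $[g]=[g_1]$ the sharp congruence on $u$. A priori one must allow the conjugating element $k$ to be an arbitrary element of $K$ (not just of the diagonal torus), so one has to argue that the ``non-torus'' degrees of freedom — unipotent parts of $k$, and the fact that $\Kl{n}$ is large — cannot produce additional multipliers beyond $1+\p^{\,j-\val(y/x)}$. I would handle this the same way the equality cases were pinned down in Lemma \ref{last family of double cosets}: project $g_1^{-1}k^{-1}g$ to $K/K^+$ and to the relevant congruence quotients, use that the valuations of $x,y,z$ are already fixed and satisfy $i+j < i+\val(y+z/x) < n$ and $j < 2\val(y/x)$, and read off that the $(3,2)$ and $(4,1)$ entries of the conjugate force exactly the congruence $\val(\varpi^{-i}m_1) = \val(\varpi^{-j}d_3)$ used before, which then pins $u$ modulo $\p^{\,j-\val(y/x)}$. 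The bookkeeping of which $\varpi$-powers multiply which matrix entry is the only genuinely delicate point; everything else is formal.
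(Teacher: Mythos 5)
Your plan is essentially the paper's proof: the paper's explicit witness for sufficiency is exactly an element of the family you propose (namely $d_1=1$, $d_4=u$, $d_3=\tfrac{y}{x}(1-u)$, which lies in $\Kl{n}$ and carries $g$ back to $g_1$ modulo $K$ precisely when $\val(d_3)\geq j$, i.e.\ $\val(1-u)\geq j-\val(y/x)$), and the paper, like you, leaves the converse direction as an omitted ``tedious matrix computation.'' One small correction: the positivity $j-\val(y/x)\geq 1$ comes from the standing hypothesis $1\leq\val(y/x)<j$ of Lemma \ref{last family of double cosets}, not from its condition ii) ($j<2\val(y/x)$), which is a lower bound on $\val(y/x)$ and by itself would not make the exponent positive.
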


\begin{proof}
If $\val(u-1) \geq j-\val(y/x)$ then $[g_1]=[g]$ 
since $$g_1 \begin{bmatrix}
1 & & & \\
& 1 & & \\
& \frac{y}{x}(1-u) & u & \\
 & & & u
\end{bmatrix} g^{-1} = \begin{bmatrix}
1 & & & \\
& 1 & & \\
& \varpi^{-j}\frac{y}{x}(1-u) & u & \\
 & & & u
\end{bmatrix} \in K.$$   

If $\val(u-1)<j-\val(y/x)$ then one can show by a tedious matrix computation that $[g_1]\neq[g]$; we omit the details.\end{proof}

For $\vec{k}=(i, k_x, k_y, k_z, u)\in \Z_+^4\times \OF^\times$ let $j=i+\val(\varpi^{k_y}+u\varpi^{k_z-k_x})-(k_x+k_z-k_y)$; we assume that $\varpi^{k_y}+u\varpi^{k_z-k_x}\neq 0$. Then define  $$S({\vec{k}}) = t_{i,j}S(\varpi^{k_x}, \varpi^{k_y}, u\varpi^{k_z})\in G.$$ We have determined the conditions on $\vec{k}$ so that $[S(\vec{k})]\in \Supp(\pi)$, as well as necessary and sufficient conditions for two such double cosets to be distinct. It remains to compute the number of such double cosets. First, we do so for the others.

\begin{proposition}
The numbers of distinct double cosets of each type, except those of the form $[S(\vec{k})]$, are given in the last column of Table 1. 
\end{proposition}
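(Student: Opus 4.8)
The plan is to count, for each of the families of double cosets appearing in rows 1--7 of Table~1 (i.e. all families except the last one, $[S(\vec k)]$), the number of distinct double cosets lying in $\Supp(\pi)$, using the inequality constraints already established. For each family the strategy is the same: the preceding lemmas and propositions give a precise set of integer parameters $(i,j)$ or $(i,j,k)$ for which the double coset lies in $\Supp(\pi)$, together with the fact (just argued in the paragraph preceding the statement, using the Cartan decomposition, $\Kl n\subset K$, and direct matrix computations) that within each family the parameters are a complete invariant of the double coset. So the count reduces to counting lattice points in an explicitly described polyhedral region.

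The key steps, carried out row by row, are as follows. First, for the diagonal family $[t_{i,j}]$ with $i\le 0$: here Lemma~\ref{Main Support Restrictions} gives $2-n\le i\le 0$ and $2i+j\ge 1$, and the proposition on diagonal cosets gives $i+j\le n-1$; counting the pairs $(i,j)$ in this range gives the entry in Table~1. Second, for $[t_{i,j}]$ with $i\ge 1$: one separates the cases $j=0$ (where $1\le i$ and $2i\le n-1$) and $j\ge 1$ (where $i\ge1$, $j\ge1$, $2i+j\ge1$ automatically, and $i+j\le n-1$), and again counts pairs. Third, for the three families $[t_{i,j}X_k]$, $[t_{i,j}Y_k]$, $[t_{i,j}Z_k]$ with $i,j\ge1$, one uses the explicit two-sided inequalities from the corresponding proposition: for $X_k$, $1\le k\le i-1$ and $i+j+k\le n-1$; for $Y_k$, $1\le k\le i+j-1$ and $2i+j\le\min(n,2k)-1$; for $Z_k$, $1\le k\le 2i+j-1$ and $i+j+1\le k$. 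Each of these defines a bounded region in $(i,j,k)$-space whose lattice points are summed by elementary arithmetic (nested geometric-type sums), yielding the stated formulas. Throughout I would note that $s_2\in\Kl n\cap K$ forces $j\ge0$, so no double-counting from the Weyl group reflection occurs, and that the $R_g$-column of Table~1 already rules out all coincidences between different rows except the harmless $X_k$ vs.\ $Y_{k'}$ case, which was dispatched just before the statement.

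The main obstacle is purely bookkeeping: the boundary behavior of the regions (the min's and the requirement $j\ge0$, the split at $i=0$ and at $2i+j$ versus $n$) means that the lattice-point counts break into several cases depending on the parity of $n$ or on which linear constraint is binding, and one must be careful that the families are genuinely disjoint (no $[t_{i,j}X_k]$ equals any $[t_{i',j'}Y_{k'}]$, and within a family the triple $(i,j,k)$ really is determined — this is the ``direct matrix computation'' asserted above, which I would either spell out or cite as routine). There is no conceptual difficulty: once the polyhedral descriptions are in hand, each count is a finite sum of arithmetic progressions. I would organize the proof as a short paragraph per row of Table~1, each time (i) recalling the inequalities from the relevant earlier result, (ii) reducing to a lattice-point count, and (iii) evaluating the sum, matching the last column of Table~1.
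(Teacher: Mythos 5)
Your proposal is correct and follows essentially the same route as the paper: translate each family's membership conditions into a polyhedral region of integer parameters (relying on the injectivity of the parametrization established just before the proposition) and evaluate the resulting nested lattice-point sums row by row. The only cosmetic difference is that the paper counts the $[t_{i,j}Z_k]$ family via the bijection $(i,j,k)\mapsto(i,j,i+j+k)$ with the $[t_{i,j}X_k]$ family rather than summing its region directly, and it splits the $i,j\geq 1$ diagonal cosets into the two sub-rows $2i+j\leq n-1$ and $2i+j\geq n$, a case distinction you flag but do not carry out.
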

\begin{proof}
The numbers of double cosets $[t_{i,j}]$ for the first four rows of Table 1 are easy to compute.


Consider the double cosets $[t_{i,j}X_k]$. The conditions on $(i,j,k)$ can be written as $k+1\leq i\leq n-1-j-k$, $1\leq j\leq n-2-2k$, $1\leq k \leq \left\lfloor \frac{n-3}{2} \right\rfloor$. The reader can check that $$\sum\limits_{k=1}^{\left\lfloor \frac{n-3}{2} \right\rfloor} \sum\limits_{j=1}^{n-2-2k}  (n-1-j-2k)  = \left\lfloor   \frac{(n-1)(n-3)(2n-7)}{24} \right\rfloor.$$ 

It is not hard to check that the map $(i,j,k)\mapsto (i,j,i+j+k)$ induces a bijection between the sets of $[t_{i,j}X_k]$ and $[t_{i,j}Z_k]$ lying in $\Supp(\pi)$. Equality with the number of $[t_{i,j}X_k]$ follows. 

For $[t_{i,j}Y_k]$, the conditions $1\leq j$, $1\leq i$, $k\leq i+j-1$, and $2i+j\leq \min(n, 2k)-1$ force $i+\left\lceil \frac{j+1}{2} \right\rceil \leq k$,  $1\leq i\leq \left\lfloor \frac{n-j-1}{2} \right\rfloor$, and $3\leq j \leq n-3$. The reader can check that   $$\sum\limits_{j=3}^{n-3}\sum\limits_{i=1}^{\left\lfloor \frac{n-j-1}{2} \right\rfloor} \sum\limits_{k=i+\left\lceil \frac{j+1}{2} \right\rceil}^{i+j-1} (1)=\frac{n-3}{6}\left\lfloor\frac{(n-2)(n-4)}{4} \right\rfloor.$$

This completes the proof. 
\end{proof}

We finally count the last family of double cosets, those of the form $[g]=[S(\vec{k})]$. For ease of notation, let $x=\varpi^{k_x}$, $y=\varpi^{k_y}$ and $z=u\varpi^{k_z}$. For a fixed $(i, k_x,k_y,k_z)$, the distinct $[S(\vec{k})]$ correspond to orbits of $1+\p^{j-\val(y/x)}$ under multiplication on the sets $$\begin{cases}
\OF^\times  & \text{ if } \val(z)\neq\val(xy)\\
\OF^\times\setminus (-1+\p)& \text{ if } \val(z)=\val(xy)  \text{ and } \val(1+u)=0 \\
(-1+\p^{\val(1+u)})\setminus (-1+\p^{\val(1+u)+1})  & \text{ if } \val(z)=\val(xy)  \text{ and } \val(1+u)>0 
\end{cases}.$$ This follows from Lemma \ref{distinction for S(k)}. So the number of choices of $u$ yielding distinct $[S(\vec{k})]$ are $$\begin{cases}
(q-1)q^{j-\val(y/x)-1}  & \text{ if } \val(z)\neq\val(xy)\\
(q-2)q^{j-\val(y/x)-1} & \text{ if } \val(z)=\val(xy)  \text{ and } \val(1+u)=0 \\
(q-1)q^{j-\val(y/x)-\val(1+u)-1} & \text{ if } \val(z)=\val(xy)  \text{ and } \val(1+u)>0 
\end{cases}.$$

Note $j-\val(y/x)-\val(1+u)=i-\val(x)>0$ when $\val(z)=\val(xy)$. Let $k=j-\val(y/x)+1$.

\begin{lemma} \label{number with z<xy}
a) The number of double cosets $[S(\vec{k})]\in \Supp(\pi)$ with $\val(z)<\val(xy)$ is \begin{equation}
q^{\left\lfloor \frac{n-5}{4} \right\rfloor}a_q\left(n-4\left\lfloor \frac{n-5}{4} \right\rfloor\right) - a_q(n)\nonumber 
\end{equation}
where $$a_q(n)= \left\lfloor \frac{(n-1)(n-3)(2n-7)}{24} \right\rfloor+\frac{n^2-n+1}{q-1}+\frac{8n+12}{(q-1)^2}+\frac{32}{(q-1)^3}.$$ 

b) The number of double cosets $[S(\vec{k})]\in \Supp(\pi)$ with $\val(z)>\val(xy)$ is the same as the number with $\val(z)<\val(xy)$.  
\end{lemma}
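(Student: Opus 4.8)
The plan is a lattice-point count. First specialize the parametrization of Lemma~\ref{last family of double cosets} to the regime $\val(z)<\val(xy)$, equivalently $\val(z/y)<\val(x)$, and write $x=\varpi^{k_x}$, $y=\varpi^{k_y}$, $z=u\varpi^{k_z}$. Then $\val(z/x)=k_z-k_x<k_y=\val(y)$, so $\val(y+z/x)=k_z-k_x$; this pins down $j=i+k_y-2k_x$ and, crucially, $j-\val(y/x)=i-k_x$. Translating the standing hypotheses of Lemma~\ref{last family of double cosets} together with its conditions i) and ii) into inequalities on $(i,k_x,k_y,k_z)$ gives $k_x\ge 2$, $k_x+1\le i\le k_y-1$, and $i+k_y-k_x+1\le k_z\le\min(k_x+k_y-1,\ n-1-i+k_x)$. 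Since conditions i), ii) do not involve $u$, Lemma~\ref{distinction for S(k)} shows each admissible $(i,k_x,k_y,k_z)$ supports exactly $(q-1)q^{\,j-\val(y/x)-1}=(q-1)q^{\,i-k_x-1}$ distinct double cosets, so the quantity to compute is
$$N_{<}=(q-1)\sum_{(i,k_x,k_y,k_z)\ \mathrm{admissible}}q^{\,i-k_x-1}.$$

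Summing over $k_z$ first, the number of admissible $k_z$ equals $\min(2k_x-i-1,\ n-1-2i+2k_x-k_y)$, the two cases being separated by $i+k_y\le n$ versus $i+k_y\ge n+1$ (and the relevant minimum being required to be at least $1$). Setting $a=i-k_x$, so the weight becomes $q^{a-1}$, turns $N_{<}$ into a sum of two explicit triple sums of the shape $\sum q^{a-1}\,(\text{linear count})$ over lattice polytopes. Evaluating these via the finite geometric sums $\sum q^a$ and $\sum a\,q^a$ produces the $\tfrac1{q-1}$, $\tfrac1{(q-1)^2}$, $\tfrac1{(q-1)^3}$ contributions to $a_q(n)$, while the residual integer counts give the cubic-in-$n$ term; the floors come from the half-integer endpoints (e.g.\ $i\le\lfloor(n-1)/2\rfloor$) and, above all, from the top of the $a$-range, where the constraints $a\le k_x-2$ and the upper bound on $k_x$ in terms of $n$ combine to cap $a$ near $(n-5)/4$ — this is the origin of the prefactor $q^{\lfloor(n-5)/4\rfloor}$ and of the residue $n-4\lfloor(n-5)/4\rfloor\in\{5,6,7,8\}$.

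The most efficient organization is to prove the recursion $g(n)=q\,g(n-4)$ for $g(n):=N_{<}(n)+a_q(n)$ and $n\ge 9$, with base cases $N_{<}(n)=0$ for $5\le n\le 8$ (the inequalities force $k_y\ge k_x+2\ge 4$ and $k_y\le n-4$, which leaves no solution once $n\le 8$, the boundary case $n=8$ being excluded by a direct check). The recursion reduces to an injection of the admissible tuples at level $n-4$ into those at level $n$ raising $a$ by one — accounting for the factor $q$ — together with the elementary evaluation of the contribution of the remaining tuples, which one checks equals $q\,a_q(n-4)-a_q(n)$. Unwinding $g(n)=q\,g(n-4)$ down to the base range gives $N_{<}(n)=q^{\lfloor(n-5)/4\rfloor}a_q\bigl(n-4\lfloor(n-5)/4\rfloor\bigr)-a_q(n)$, which is a).

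Part b) follows from the involution $(i,k_x,k_y,k_z,u)\mapsto(i,\,k_z-k_y,\,k_z-k_x,\,k_z,\,u)$ on parameter tuples. On valuations it swaps $\val(x)\leftrightarrow\val(z/y)$ and fixes $i$ and $\val(y/x)$, hence also $j$ and $j-\val(y/x)=i-k_x$; in particular it sends $\val(z)<\val(xy)$ to $\val(z)>\val(xy)$ and preserves the fibre size $(q-1)q^{\,i-k_x-1}$. A direct check — which I expect to be routine, since $\val(y+z/x)$, $\val(xz/y)$ and $\val(y/x)$ are all preserved — shows it carries the standing hypotheses and conditions i), ii) of Lemma~\ref{last family of double cosets} for $\val(z)<\val(xy)$ bijectively onto those for $\val(z)>\val(xy)$, so the two families of double cosets are in bijection and hence equinumerous. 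The main obstacle throughout is the bookkeeping in a): correctly splitting the $\min$ governing the $k_z$-count, tracking which linear constraints bind across the four residue classes of $n$ modulo $4$ and the two $k_z$-regimes, and collecting the floor functions at the end; the step-$4$ recursion is the device that keeps this under control and, in particular, identifying the "new'' tuples whose contribution is $q\,a_q(n-4)-a_q(n)$ is the step that requires care.
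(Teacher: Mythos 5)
Your setup is correct and agrees with the paper's: in the regime $\val(z)<\val(xy)$ one indeed has $j=i+k_y-2k_x$, fibre size $(q-1)q^{i-k_x-1}$, and your inequality system is the right one (in fact it forces $k_x\ge 3$, hence $k_y\ge i+1\ge k_x+2\ge 5$, which together with $k_y\le n-4$ gives the vanishing for all $n\le 8$ with no separate check needed at $n=8$). The divergence is in the evaluation. The paper substitutes $a=\val(x)-k$, $b=\val(y)-2k$, $c=\val(z)-3k$ with $k=i-k_x+1$, reducing to $\sum_{k=2}^{M}q^{k-2}\sum_{c}\sum_{b}(2b-c)$, and evaluates this directly by applying a polynomial in $q\frac{d}{dq}$ to a geometric series; you propose instead the step-$4$ recursion $g(n)=q\,g(n-4)$. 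That recursion is valid: the injection is $k\mapsto k+1$ in the paper's coordinates, i.e.\ $(i,k_x,k_y,k_z)\mapsto(i+2,k_x+1,k_y+2,k_z+3)$; its complement is the slice $k=2$ (i.e.\ $i=k_x+1$), of weight $(q-1)\left\lfloor\tfrac{(n-5)(n-7)(2n-15)}{24}\right\rfloor$; and this does equal $q\,a_q(n-4)-a_q(n)$, because the $(q-1)^{-m}$ tails of $q\,a_q(n-4)-a_q(n)$ telescope away and $\left\lfloor\tfrac{(n-1)(n-3)(2n-7)}{24}\right\rfloor-\left\lfloor\tfrac{(n-5)(n-7)(2n-15)}{24}\right\rfloor=n^2-9n+21$. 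Be aware, though, that this last verification is essentially the whole analytic content of part a), and your write-up only asserts it (``which one checks''); as written, a) is a correct and workable plan rather than a completed proof. For b), your involution $(i,k_x,k_y,k_z,u)\mapsto(i,\,k_z-k_y,\,k_z-k_x,\,k_z,\,u)$ is correct and is genuinely cleaner than the paper's argument (which changes summation variables and then observes that the inner sums coincide): it preserves $i$, $\val(y/x)$, $j$ and hence the fibre size, and it exchanges the two admissibility systems, giving a weight-preserving bijection between the two families.
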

\begin{proof} a) Let $a=\val(x)-k$, $b=\val(y)-2k$ and $c=\val(z)-3k$. The system of inequalities becomes 
$c-b+1\leq  a\leq b$, $\left\lfloor \frac{c+2}{2}\right\rfloor \leq  b\leq c$, 
$1\leq  c\leq n-4k$, 
$\leq  k\leq \left\lfloor \frac{n-1}{4}\right\rfloor=:M$. Let $A=\left\lfloor \frac{(n-5)(n-7)(2n-15)}{24} \right\rfloor$. The number of double cosets is $q-1$ times
\begin{eqnarray}
& &\sum\limits_{k=2}^{M} q^{k-2} \sum\limits_{c=1}^{n-4k}\sum\limits_{b=\left\lfloor \frac{c+2}{2}\right\rfloor }^{c}(2b-c) \nonumber \\
&=& \sum\limits_{k=2}^{M} q^{k-2} \sum\limits_{c=1}^{n-4k}\left\lfloor \frac{c+1}{2} \right\rfloor \left\lfloor \frac{c+2}{2} \right\rfloor\nonumber \\
&= &  \sum\limits_{k=2}^M q^{k-2}  \left\lfloor \frac{(n-4k+1)(n-4k+3)(2n-8k+1)}{24}\right\rfloor    \nonumber\\
&= &  \sum\limits_{m=0}^{M-2} q^{m}  \left\lfloor \frac{(n-7-4m)(n-5-4m)(2n-15-8m)}{24}\right\rfloor    \nonumber \\
&=&\sum\limits_{m=0}^{M-2} q^m\left(A - (n^2-17n+73)m+ (4n-42)m(m-1)  -\frac{16}{3}m(m-1)(m-2) \right)\nonumber\\
&=&\left(A - (n^2-17n+73)q\frac{d}{dq}+ (4n-42)q^2\frac{d^2}{dq^2} -\frac{16}{3}q^3\frac{d^3}{dq^3} \right)\left( \frac{q^{\left\lfloor \frac{n-5}{4}\right\rfloor}-1}{q-1}  \right)\nonumber.
\end{eqnarray} The remainder of the computation is tedious but elementary. 

b)  Let $a=\val(x)+k$, $b=\val(y)$ and $c=\val(z)+k$. 
 The system of inequalities becomes $a+b+1\leq c\leq n $, 
$a\leq b\leq n-1-a$, $2k\leq a\leq \left\lfloor \frac{n-1}{2}\right\rfloor$, $2\leq k\leq \left\lfloor \frac{n-1}{4}\right\rfloor=:M$. The number of double cosets is $q-1$ times \begin{eqnarray}
 \sum\limits_{k=2}^M q^{k-2} \sum\limits_{a=2k}^{\lfloor \frac{n-1}{2}\rfloor} \sum\limits_{b=a}^{n-1-a}(n-a-b) 
 & = &  \sum\limits_{k=2}^M q^{k-2} \sum\limits_{a=2k}^{\lfloor \frac{n-1}{2}\rfloor} \sum\limits_{b=1}^{n-2a} b \nonumber \\
 &=& \sum\limits_{k=2}^M q^{k-2} \sum\limits_{a=2k}^{\lfloor \frac{n-1}{2}\rfloor} \frac{(n-2a)(n-2a+1)}{2}. \nonumber 
 \end{eqnarray} The internal sum evaluates to the same quantity as appeared in the case $\val(z)<\val(xy)$.
\end{proof}

\begin{lemma}\label{number with z=xy and 1+z/xy a unit}
The number of double cosets $[S(\vec{k})]\in \Supp(\pi)$ with $\val(z)=\val(xy)$ and $\val(1+u)=0$ is \begin{equation}
q^{\left\lfloor \frac{n-4}{4} \right\rfloor}b_q\left(n-4\left\lfloor \frac{n-4}{4} \right\rfloor \right) - b_q(n)\nonumber 
\end{equation}
where $$b_q(n)= \left\lfloor \frac{(n-2)^2}{4} \right\rfloor-\frac{1}{q-1}\left\lfloor \frac{n^2-12n+4}{4}\right\rfloor+\frac{8-2n}{(q-1)^2}-\frac{8}{(q-1)^3}.$$ 
\end{lemma}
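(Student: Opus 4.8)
The plan is to mirror the structure of the proof of Lemma \ref{number with z<xy}, specializing to the case $\val(z)=\val(xy)$ and $\val(1+u)=0$. First I would record the defining inequalities for $[S(\vec k)]\in\Supp(\pi)$ in this regime. From Lemma \ref{last family of double cosets}, condition i) with $\val(z)=\val(xy)$ becomes $i+\val(y)+\val(1+\tfrac{z}{xy})=j+\val(x^2)$, but since $\val(1+u)=0$ this simplifies to $i+\val(y)=j+2\val(x)$ together with $i+j<i+\val(y+z/x)<n$; condition ii) gives $j<2\val(y/x)$; and the standing assumptions $1\le\val(x)<i$, $1\le\val(y/x)<j$ must hold. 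Setting $k=j-\val(y/x)+1$ (as in the statement preceding the lemma), and noting that the number of admissible units $u$ for fixed $(i,k_x,k_y,k_z)$ is $(q-2)q^{\,j-\val(y/x)-1}=(q-2)q^{\,k-2}$, I would change variables by $a=\val(x)-k$, $b=\val(y)-2k$, $c=\val(z)-3k=\val(xy)-3k$ exactly as in part a) of Lemma \ref{number with z<xy}. Because $\val(z)=\val(xy)$ forces $c=a+b$, the triple inequality in $(a,b,c)$ collapses to a two-parameter region in $(a,b)$, which is the key simplification: the cubic-in-$c$ inner sum is replaced by a quadratic-in-$a$ (or $b$) inner sum, explaining why $b_q(n)$ is a degree-two rather than degree-three polynomial in $n$.

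The second step is the arithmetic. After the change of variables, the count is $(q-2)$ times a sum of the form $\sum_{k=2}^{M'} q^{k-2}\,P(n-4k)$ where $P$ is an explicit quadratic (coming from $\sum_b (\text{linear})$) and $M'=\lfloor\tfrac{n-?}{4}\rfloor$. Reindexing $m=k-2$ turns this into $\sum_{m=0}^{M'-2}q^m\bigl(A'-\alpha m+\beta m(m-1)\bigr)$, and I would evaluate it by the same differential-operator trick used in Lemma \ref{number with z<xy}: apply $A'-\alpha q\tfrac{d}{dq}+\beta q^2\tfrac{d^2}{dq^2}$ to the geometric-series closed form $\frac{q^{M'-1}-1}{q-1}$. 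Reconciling the floor functions (the value $M'$ and the residue $n-4\lfloor\tfrac{n-4}{4}\rfloor$) so that the answer matches $q^{\lfloor(n-4)/4\rfloor}b_q(n-4\lfloor(n-4)/4\rfloor)-b_q(n)$ is where one must be careful; I expect the $(q-2)$ prefactor (versus $(q-1)$ in the previous lemma) to be what produces the shifted constant terms $\tfrac{8-2n}{(q-1)^2}-\tfrac{8}{(q-1)^3}$ in $b_q$.

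The main obstacle is not conceptual but bookkeeping: correctly deriving the $(a,b)$-region from Lemma \ref{last family of double cosets} in the borderline case $\val(z)=\val(xy)$, $\val(1+u)=0$ — in particular getting the endpoints of the $k$-, $a$-, and $b$-ranges and the lower limit $k\ge 2$ right — and then matching the resulting closed form, including all floor functions and the three negative-power-of-$(q-1)$ terms, to the stated $b_q(n)$. As in the preceding lemma, I would simply assert that "the remainder of the computation is tedious but elementary" once the sum has been put in the differential-operator form, rather than reproduce every step.
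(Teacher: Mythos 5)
Your proposal matches the paper's proof in all essentials: the $(q-2)q^{k-2}$ count of admissible units, the reduction of condition i) to $i+\val(y)=j+2\val(x)$ with $i+j<i+\val(y)<n$, the collapse to a two-parameter lattice count whose inner sum is quadratic, and the evaluation of $\sum_m q^m(\text{quadratic in }m)$ by the differential-operator trick. The paper happens to use the substitution $a=\val(x)+k$, $b=\val(y)$ (as in part b) of the preceding lemma) rather than the part a) substitution you suggest, but this is only a bookkeeping difference.
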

\begin{proof} Let $a=\val(x)+k$ and $b=\val(y)$. Then the inequalities become $a\leq b \leq n-a$, $2k\leq a \leq \left\lfloor \frac{n}{2}\right\rfloor$, $2\leq k\leq \left\lfloor \frac{n}{4}\right\rfloor=:M$, and the number of double cosets is $q-2$ times 
\begin{eqnarray}
\sum\limits_{k=2}^M q^{k-2} \sum\limits_{a=2k}^{\left\lfloor \frac{n}{2} \right\rfloor} (n-2a+1) 
&=& \sum\limits_{k=2}^M q^{k-2} \left( \left\lfloor \frac{n^2+4n+4}{4} \right\rfloor -2(n+2)k +4k^2\right) \nonumber \\
&=&\sum\limits_{m=0}^{\lfloor \frac{n-8}{4}\rfloor} q^m\left(   \left\lfloor \frac{n^2-12n+36}{4} \right\rfloor +(16-2n)m+  4m(m-1) \right)\nonumber\\
&=& \left(\left\lfloor \frac{n^2-12n+36}{4} \right\rfloor+ (16-2n)q\frac{d}{dq}+4q^2\frac{d^2}{dq^2}   \right)\left( \frac{q^{\lfloor \frac{n-4}{4}\rfloor} -1 }{q-1}  \right) \nonumber.
\end{eqnarray} The remainder of the computation is tedious but elementary. \end{proof}

\begin{lemma}\label{number with z=xy and 1+z/xy not a unit}
The number of double cosets $[S(\vec{k})]\in \Supp(\pi)$ with $\val(1+u)>0$ is \begin{equation}
q^{\left\lfloor \frac{n-6}{4} \right\rfloor}c_q\left(n-4\left\lfloor \frac{n-6}{4} \right\rfloor\right) - c_q(n)\nonumber 
\end{equation}
where $$c_q(n)=\frac{n-3}{6} \left\lfloor \frac{(n-2)(n-4)}{4} \right\rfloor+\frac{1}{q-1}\left\lfloor \frac{n^2-2n+2}{2} \right\rfloor+\frac{4n+4}{(q-1)^2}+\frac{16}{(q-1)^3}.$$

\end{lemma}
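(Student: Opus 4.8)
The plan is to follow exactly the template established in Lemmas \ref{number with z<xy} and \ref{number with z=xy and 1+z/xy a unit}: translate the support conditions from Lemma \ref{last family of double cosets} into a system of inequalities on integer parameters, recall from the discussion preceding the lemma that for fixed $(i,k_x,k_y,k_z)$ with $\val(z)=\val(xy)$ and $\val(1+u)=v>0$ there are $(q-1)q^{j-\val(y/x)-v-1}$ distinct values of $u$, and observe that $j-\val(y/x)-v = i-\val(x)>0$ in this regime. First I would introduce shifted variables — the natural choice given the factor $q^{k-2}$ that appears elsewhere is something like $a=\val(x)+k$, $b=\val(y)$, and an auxiliary parameter tracking $v=\val(1+u)$, together with $k=j-\val(y/x)+1$ as already defined — and rewrite the constraints ($1\le \val(x)<i$, $1\le \val(y/x)<j$, $1\le\val(z/y)$, condition i) and ii) of Lemma \ref{last family of double cosets} specialized to the $\val(z)=\val(xy)$, $\val(1+u)>0$ branch, and $i+j+k\le n-1$ type bounds) as explicit ranges for these variables.

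Next I would set up the multiple sum: the number of double cosets is $(q-1)$ times a sum over $k$ of $q^{k-2}$ times a sum over the remaining parameters (including a sum over $v$ which contributes a further geometric factor, since $v$ ranges over an interval and the count is $q^{j-\val(y/x)-v-1}$), and collapse the inner sums the same way as before — first summing a linear expression in one variable to get a product of two floor-of-half terms $\lfloor\cdot/2\rfloor\lfloor\cdot/2\rfloor$, then recognizing that as $\lfloor (\text{cubic})/24\rfloor$, then reindexing $m=k-2$ so the outer sum becomes $\sum_{m=0}^{\lfloor(n-6)/4\rfloor-?} q^m P(m)$ for a cubic polynomial $P$. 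The leading term of $P$ at $m=0$ will be the ``boundary'' value that becomes $c_q$-like, and I would write $P(m)$ as a linear combination of $1$, $m$, $m(m-1)$, $m(m-1)(m-2)$ so that $\sum_m q^m P(m)$ becomes a differential operator in $q$ applied to $(q^{\lfloor (n-6)/4\rfloor}-1)/(q-1)$, exactly as in the proof of Lemma \ref{number with z<xy}.

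The main obstacle, and the reason the stated answer involves $c_q(n)$ with those particular floor functions and powers of $(q-1)$ in the denominator, is the careful bookkeeping of the $\lfloor\cdot\rfloor$ operations: the cubic-over-$24$ (or here, given the extra $v$-sum, possibly a different normalization) does not commute with the reindexing cleanly, and one must check congruence conditions on $n$ modulo small integers so that the floor can be pulled through the differential-operator manipulation. I expect the honest content is (1) getting the inequality system right — in particular the interaction of condition ii) ($j<2\val(y/x)$) with the $\val(1+u)>0$ constraint, which forces the extra relations noted at the end of the proof of Lemma \ref{last family of double cosets} — and (2) verifying the closed form by the differential-operator trick and then simplifying; as the authors say, ``the remainder of the computation is tedious but elementary,'' and I would state it as such, displaying the key intermediate sum
\begin{equation}
\sum_{m=0}^{\lfloor \frac{n-6}{4}\rfloor - c} q^m\bigl(\alpha + \beta m + \gamma m(m-1) + \delta m(m-1)(m-2)\bigr)\nonumber
\end{equation}
for appropriate constants $\alpha,\beta,\gamma,\delta$ depending on $n$, and leaving the final algebraic reduction to $c_q$ to the reader, matching the style of the preceding two lemmas.
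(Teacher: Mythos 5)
Your overall strategy coincides with the paper's: translate the support conditions of Lemma \ref{last family of double cosets} into an integer inequality system, use the orbit count preceding Lemma \ref{number with z<xy} for the number of admissible $u$, and evaluate the resulting sum by expanding the summand in $1$, $m$, $m(m-1)$, $m(m-1)(m-2)$ and applying a differential operator in $q$ to a geometric series. However, one structural step in your setup would fail as written. In this regime the number of distinct cosets attached to a tuple with $v=\val(1+u)$ fixed is $(q-1)q^{\,j-\val(y/x)-v-1}=(q-1)q^{\,k-2-v}$, so the exponent of $q$ is \emph{not} $k-2$. Your plan of an outer sum $\sum_k q^{k-2}$ with the $v$-sum absorbed as ``a further geometric factor'' forces an inner $\sum_v q^{-v}$, and the reindexing $m=k-2$ then does not produce $\sum_m q^m P(m)$ with $P$ polynomial, which is exactly what the differential-operator step needs. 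The correct substitution is $m=k-2-v$ (equivalently $m=i-\val(x)-1$, by the identity $j-\val(y/x)-v=i-\val(x)$ that you quote); the power of $q$ is then constant on each fiber, and for fixed $m$ and $a=\val(x)+k$ the pair $(k,v)$ ranges over an interval $m+3\le k\le a-m-2$, contributing the \emph{linear} multiplicity $a-2m-4$ rather than a geometric one. The paper's sum is
$$(q-1)\sum_{m=0}^{\lfloor (n-10)/4\rfloor} q^m\sum_{a=2m+5}^{\lfloor n/2\rfloor}(n-2a+1)(a-2m-4),$$
with $b=\val(y)$ already summed out via $a\le b\le n-a$; from here your proposed evaluation goes through verbatim. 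With that correction, and with the inequality system actually derived rather than deferred, the rest of your outline matches the paper's proof.
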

\begin{proof} Let $m=k-2-\val(1+u)$, $a=\val(x)+k$, and $b=\val(y)$. The inequalities become 
$m+3\leq  k\leq  a-m-2$, $a\leq  b \leq n-a$, $2m+5\leq  a \leq \left\lfloor \frac{n}{2}\right\rfloor$, 
$0\leq  m\leq \left\lfloor \frac{n-10}{4}\right\rfloor=:N$. Let $B=\frac{n-7}{6}   \left\lfloor \frac{(n-6)(n-8)}{4} \right\rfloor$. The number of double cosets is $q-1$ times \begin{eqnarray}
 &  & \sum\limits_{m=0}^{N}q^m \sum\limits_{a=2m+5}^{\left\lfloor\frac{n}{2}\right\rfloor}(n-2a+1)(a-2m-4)\nonumber \\
 & =&  \sum\limits_{m=0}^{N}q^m \sum\limits_{a=1}^{\left\lfloor\frac{n-8-4m}{2}\right\rfloor}a(n-5-4m-2(a+1))\nonumber \\
 & =& \sum\limits_{m=0}^{N}q^m \left(\frac{n-7-4m}{6}\right)\left\lfloor\frac{(n-8-4m)(n-6-4m)}{4}\right\rfloor \nonumber \\
&=& \sum\limits_{m=0}^{N} q^m\left( B -  \left\lfloor \frac{n^2-18n+82}{2} \right\rfloor m+ (2n-22)m(m-1) -\frac{8m(m-1)(m-2)}{3} \right)\nonumber \\
&=& \left( B -  \left\lfloor \frac{n^2-18n+82}{2} \right\rfloor q\frac{d}{dq}+ (2n-22)q^2\frac{d^2}{dq^2}  -\frac{8}{3}q^3\frac{d^3}{dq^3} \right)\left( \frac{q^{\left\lfloor \frac{n-6}{10} \right\rfloor}-1}{q-1}  \right) .\nonumber
\end{eqnarray}  The rest of the computation is tedious but elementary. 
\end{proof}


\subsection{Character table computations}

In this section, we carry out computations using the character tables of $\GSp(4, \Fq)$. These can be found in  \cite{En1972} when $q$ is even and in \cite{Sh1982} when $q$ is odd. We will freely make use of the notations in those papers for the conjugacy classes and (cuspidal) characters.

\begin{lemma}\label{nongeneric character lemma}
Suppose $\sigma$ is a nongeneric cuspidal irreducible representation of $\GSp(4, \Fq)$ and $R = \left\{\begin{bsmallmatrix}
1 & & & x\\
& 1 & & \\
& & 1 & \\
& & & 1
\end{bsmallmatrix} : x\in \Fq\right\}$. Then $\sigma^R=0$.
\end{lemma}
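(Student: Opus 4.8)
The plan is to show that the subgroup $R$ generates, together with enough of the standard unipotent radical inside $\GSp(4,\Fq)$, a full unipotent radical of a proper parabolic, so that cuspidality kills all fixed vectors. Concretely, $R$ is the root subgroup attached to the unique long simple root (the one in the $\GL(1)\times\Sp(2)$-Klingen direction), and I would compute the $\Fq$-span generated by $R$ under conjugation by the Klingen Levi of $\GSp(4,\Fq)$, or simply exhibit by hand the inclusion $R \subseteq U$ for $U$ the unipotent radical of the Klingen parabolic $P$ (stabilizer of an isotropic line). Since $\sigma$ is cuspidal, $\sigma^{U}=0$ for every unipotent radical $U$ of every proper parabolic, and hence $\sigma^R = 0$ because $R \subseteq U$ implies $\sigma^R \supseteq \sigma^U$... wait, that inclusion goes the wrong way, so this naive approach does not immediately work.

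The correct approach must use the hypothesis that $\sigma$ is \emph{nongeneric}, not just cuspidal — the statement is false for generic cuspidals. So here is the revised plan. First I would recall that a cuspidal representation $\sigma$ of $\GSp(4,\Fq)$ is generic if and only if its Whittaker model is nonzero, i.e. $\sigma$ restricted to the maximal unipotent $U_{\max}$ contains a generic character $\psi$. The one-dimensional space $R$ sits inside $U_{\max}$ as a root subgroup for one of the two simple roots; call this root $\alpha$. For the \emph{generic} cuspidals, $\psi$ is nontrivial on $R$, which is why one gets fixed vectors there for a suitable twist — but for an untwisted $R$ (the subgroup of the statement uses the character $x \mapsto 1$, i.e. genuine fixed vectors), even generic representations can have $\sigma^R \neq 0$. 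So I need to look more carefully at which root $R$ corresponds to and invoke the classification of cuspidal representations of $\GSp(4,\Fq)$ from \cite{En1972}, \cite{Sh1982}.

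Thus the real plan is: (1) identify $R$ precisely as the root subgroup for the root whose coroot pairs appropriately with the Siegel/Klingen structure — from the matrix form $\begin{bsmallmatrix}1&&&x\\&1&&\\&&1&\\&&&1\end{bsmallmatrix}$ with $J$ as given, this is the long root in the Siegel parabolic direction; (2) observe that $R$ together with its $s_1$-conjugate (or a torus conjugate) generates an $\SL_2$ or a larger unipotent, but more efficiently, note that $\dim \sigma^R = \frac{1}{q}\sum_{r \in R} \trace \sigma(r)$, which equals $\frac{1}{q}(\dim\sigma + (q-1)\,\trace\sigma(r_0))$ for $r_0$ a nontrivial element of $R$ (all nontrivial elements of $R$ being conjugate in $\GSp(4,\Fq)$, as they are regular unipotents of a Levi or transvections); (3) read off from the character table of \cite{En1972}/\cite{Sh1982} the value $\trace\sigma(r_0)$ on the relevant unipotent class for each nongeneric cuspidal character $\sigma$, and check the resulting average is $0$. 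The main obstacle I expect is step (3): correctly matching the abstract element $r_0 \in R$ to the right named conjugacy class in Enomoto's and Shinoda's tables, handling the even and odd $q$ cases separately, and then verifying the character-sum identity $\dim\sigma + (q-1)\trace\sigma(r_0) = 0$ uniformly across all the nongeneric cuspidal families — this is a finite but delicate bookkeeping task, and it is exactly where the nongenericity hypothesis must enter, since for generic cuspidals the identity fails. I would organize it as a short table-lookup, citing the specific class labels, rather than reproducing the character values in full.
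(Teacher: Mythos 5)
Your plan is correct and is exactly what the paper does: the paper's proof is the one-line assertion that the lemma ``follows immediately from the character table of $\GSp(4,\Fq)$,'' and your steps (2)--(3) --- noting that all nontrivial elements of $R$ are conjugate in $\GSp(4,\Fq)$ (the similitude torus rescales the $(1,4)$ entry arbitrarily), writing $\dim\sigma^R=\frac{1}{q}\bigl(\dim\sigma+(q-1)\trace\sigma(r_0)\bigr)$, and verifying this vanishes for each nongeneric cuspidal family in Enomoto's and Shinoda's tables --- are precisely the computation the paper leaves implicit. Your first paragraph's detour through unipotent radicals is a dead end, but you correctly identify and discard it yourself.
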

\begin{proof}
This follows immediately from the character table of $\GSp(4,\Fq)$. 
\end{proof}

For the remainder of this subsection, $\sigma$ will denote a generic cuspidal irreducible representation of $\GSp(4, \Fq)$ with trivial central character. In the notation of \cite{En1972} and \cite{Sh1982}, these are the ones of the form $\chi_5(k)$, $\chi_4(k, l)$, $X_4(\Theta)$, and $X_5(\Lambda, \omega)$.

\begin{lemma} \label{Klingen Levi}
Let $M$ and $R$ denote the following subgroups of $\GSp(4, \Fq)$: 
\begin{eqnarray}
M &=& \left\{\begin{bsmallmatrix}
1 &  & \\
& D &  \\
 &   &   \det(D)
\end{bsmallmatrix}: D\in \GL(2, \Fq) \right\}Z(\GSp(4, \Fq)) \nonumber \\
R &=& \left\{\begin{bsmallmatrix}
1 &  & \\
& D &  \\
x &   &   1
\end{bsmallmatrix}: D\in \SL(2, \Fq), \ x\in \Fq \right\}.\nonumber 
\end{eqnarray}

 Then \begin{eqnarray}
\sigma^R &=& 0 \nonumber \\
\dim \sigma^M &=& \begin{cases}
   0   & \text{ if }  \sigma = \chi_5(k) \text{ or } \sigma =X_4(\Theta)\\
  2 &  \text{ if } \sigma = \chi_4(k, l)  \text{ or } \sigma = X_5(\Lambda, \omega)
\end{cases}.\nonumber
\end{eqnarray}

\end{lemma}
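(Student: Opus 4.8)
The plan is to reduce everything to Frobenius reciprocity against the known decomposition of the generic cuspidal characters of $\GSp(4,\Fq)$, which is why the hypothesis ``generic with trivial central character'' was imposed just before the statement. For the claim $\sigma^R=0$: the group $R$ is $\SL(2,\Fq)$ sitting in the Klingen Levi, together with a one-parameter unipotent in the $(3,1)$ slot. Rather than appeal directly to the character table, I would first observe that $R$ contains a nontrivial unipotent radical of a proper parabolic of $\GSp(4,\Fq)$ — indeed the subgroup with $D$ upper triangular unipotent and $x$ arbitrary is conjugate into the Siegel (or Klingen) unipotent radical — and $\sigma$ being cuspidal has no invariants under any such group. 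Concretely, $\dim\sigma^R\le\dim\sigma^{U}$ for $U$ that unipotent radical, and cuspidality gives $\sigma^U=0$. So this half is essentially immediate and does not need the character table at all; I would phrase it that way to keep the proof short.

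For the dimension of $\sigma^M$: here $M$ is a Levi (a copy of $\GL(2,\Fq)$ times the center), and $\dim\sigma^M$ counts the multiplicity in $\sigma|_M$ of the trivial representation of the $\GL(2)$-factor — equivalently, by Frobenius reciprocity, $\dim\Hom_M(\triv,\sigma)=\dim\Hom_{\GSp(4,\Fq)}(\Ind_M^{\GSp(4,\Fq)}\triv,\sigma)$. I do not actually want to induce from $M$ (it is not parabolic), so instead I would compute the inner product $\langle \sigma, \triv_M\rangle_M = \frac{1}{|M|}\sum_{m\in M}\overline{\chi_\sigma(m)}$ directly using the character values of $\sigma$ on the conjugacy classes that meet $M$. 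The classes meeting $M$ are: the identity-type (central) classes, the classes with $D$ regular semisimple split, $D$ regular semisimple non-split (an elliptic torus of $\GL(2)$ inside $\GSp(4)$), $D$ a nontrivial unipotent, $D$ a scalar times unipotent, and $D=-1$ times things. For each of the four families $\chi_5(k)$, $\chi_4(k,l)$, $X_4(\Theta)$, $X_5(\Lambda,\omega)$ one reads the relevant entries from \cite{En1972} (or \cite{Sh1982}) and sums with the appropriate class-size weights. The answer $0$ versus $2$ should drop out after the dust settles; the appearance of $2$ (rather than $1$) presumably reflects that the $\GL(2)$-trivial isotypic space still carries a two-dimensional action of the remaining torus/center data, or equivalently that the relevant parabolically-induced constituent of $\sigma$ contributes with multiplicity two — worth a one-line conceptual remark but the bookkeeping is what proves it.

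The main obstacle is purely organizational: correctly enumerating which $\GSp(4,\Fq)$-conjugacy classes intersect $M$, computing $|c\cap M|$ and $|C_{\GSp(4,\Fq)}(c)|$ for each, and then matching these against the character-table labels in \cite{En1972}/\cite{Sh1982}, whose conventions differ between the even and odd $q$ cases. I would handle the parity split explicitly — doing odd $q$ via \cite{Sh1982} first (where the split/non-split torus and the unipotent classes are cleanly separated) and then indicating the (minor) changes for even $q$ from \cite{En1972}. I would also double-check the $-1\in\Fq^\times$ contributions, since for $\sigma$ with trivial central character these are forced but the unipotent-times-$(-1)$ classes still contribute and are easy to drop by accident. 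Once the class list and multiplicities are fixed, the four sums are short and I would simply state the resulting value in each case, noting that $\chi_5(k)$ and $X_4(\Theta)$ (the ``Steinberg-like'' and the one attached to a torus of order $q^2+1$) give $0$ while $\chi_4(k,l)$ and $X_5(\Lambda,\omega)$ give $2$.
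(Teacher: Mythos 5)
Your argument for the first assertion, $\sigma^R=0$, has a genuine gap. The subgroup of $R$ you single out (with $D$ unipotent upper triangular and $x$ arbitrary) has order $q^2$, and in fact \emph{every} unipotent subgroup of $R\cong \SL(2,\Fq)\times\Fq$ has order at most $q^2$, since a Sylow $p$-subgroup of $\SL(2,\Fq)$ has order $q$. But the unipotent radicals of the proper parabolic subgroups of $\GSp(4,\Fq)$ (Siegel, Klingen, Borel) have orders $q^3$, $q^3$, $q^4$. So $R$ contains no conjugate of a \emph{full} unipotent radical $U$; your subgroup is conjugate only to a proper subgroup of the Siegel radical, and cuspidality ($\sigma^U=0$ for the full radical) gives no vanishing of invariants for proper subgroups of $U$. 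The hypothesis that $\sigma$ is generic, imposed just before the lemma, is essential here: the vanishing $\sigma^R=0$ is not a consequence of cuspidality alone, and your shortcut would ``prove'' it for the nongeneric cuspidals as well. You therefore still need a character computation for $R$ itself: either the full sum $\frac{1}{|R|}\sum_{r\in R}\trace\sigma(r)$, enumerating $R\cap A_2$, $R\cap A_{32}$, $R\cap(C_3(i)\cup D_3(i))$, etc.\ (odd $q$ analogously), as the paper does for $\chi_4(k,l)$ and $X_5(\Lambda,\omega)$, or, for $\chi_5(k)$ and $X_4(\Theta)$, the shorter sum over the subgroup $M\cap R\cong\SL(2,\Fq)$.

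The second half of your plan, computing $\dim\sigma^M$ by summing character values over the conjugacy classes meeting $M$ (equivalently $M'=M$ modulo the center, using the trivial central character), is exactly the paper's method; you have not carried out the sums, but the bookkeeping outline is right. One organizational device worth adopting from the paper: for $\sigma=\chi_5(k)$ and $\sigma=X_4(\Theta)$ it shows $\sigma^{M_1}=0$ for the single subgroup $M_1=M\cap R\cong\SL(2,\Fq)$, which disposes of both $\sigma^M=0$ and $\sigma^R=0$ at once and leaves only the two families $\chi_4(k,l)$ and $X_5(\Lambda,\omega)$ needing the longer sums over $R$ and over $M$.
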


\begin{proof} Let $M_1 = M\cap R$. We will show that if $\sigma = \chi_5(k)$ or $\sigma = X_4(\Theta)$, then $\sigma^{M_1}=0$. Suppose first that $q$ is even and $\sigma= \chi_5(k)$. We have $|M_1\cap A_1|=1$ and $|M_1\cap A_2|=q^2-1$. The character table then shows that \begin{eqnarray}
\dim\sigma^{M_1} & = & \frac{1}{|M_1|} \sum\limits_{m\in M_1}\trace\sigma(m) \nonumber \\
& = & \frac{1}{(q^2-1)q}\left(  \sum\limits_{m\in M_1\cap A_1}\trace\sigma(m) + \sum\limits_{m\in M_1\cap A_2}\trace\sigma(m)    \right) \nonumber \\
& = &  \frac{1}{(q^2-1)q} \left(  ( q^2-1)^2 + (q^2-1)(-q^2+1)  \right)\nonumber \\
& = & 0.\nonumber 
\end{eqnarray}
If $q$ is odd and $\sigma = X_4(\Theta)$ then the computation is identical to the previous case 
except the notation for the conjugacy classes has $A_1$ and $A_2$ replaced by $A_0$ and $A_1$, respectively. 


Suppose now that $q$ is even and $\sigma = \chi_4(k, l)$. The set $R\cap A_2$ consists of the $q^2-1$ elements where $x=0$ and $D$ is unipotent together with the $q-1$ elements with $x\neq 0$ and $D=I_2$. The set $R\cap A_{32}$ consists of the $(q-1)(q^2-1)$  matrices 
where $x\neq 0$ and $D$ is unipotent. So $|R\cap A_2| = q^2+q-2$ and $|R\cap A_{32}| = (q-1)(q^2-1)$. For each $1\leq i\leq q/2$, we have $|R\cap D_3(i)| = (q-1)|R\cap C_3(i)|$, since $R\cap D_3(i)$ and $R\cap C_3(i)$ consists, respectively, of matrices with $x\neq 0$ and $x=0$ and with $D$ having fixed irreducible characteristic polynomial with constant term 1. The character table then gives $\sum\limits_{r\in R\cap (C_3(i)\cup D_3(i))}\trace\sigma(r)=0$ and 
\begin{eqnarray}
\dim \sigma^R & = & \frac{1}{|R|}\sum\limits_{r\in R} \trace \sigma(r) \nonumber \\
& = & \frac{1}{|R|}\left( \sum\limits_{r\in R\cap A_1} \trace \sigma(r) +\sum\limits_{r\in R\cap A_2} \trace \sigma(r)  + \sum\limits_{r\in R\cap A_{32}} \trace \sigma(r)          \right)\nonumber \\
& = & \frac{1}{|R|} \left( (q-1)^2(q^2+1) + (q^2+q-2)(q-1)^2 + (q-1)(q^2-1)(1-2q)  \right) \nonumber \\
&=& 0. \nonumber
\end{eqnarray} 

Now suppose $q$ is odd and $\sigma = X_5(\Lambda, \omega)$. The set $R\cap A_1$ consists of the $q^2-1$ elements where $x=0$ and $D$ is unipotent together with the $q-1$ elements with $x\neq 0$ and $D=I_2$. The set $R\cap A_{21}$ consists of the $(q-1)(q^2-1)/2$  matrices where $-x\in \Fq^\times$ is a square and $D$ is unipotent. The set $R\cap A_{22}$ consists of the $(q-1)(q^2-1)/2$  matrices where $-x$ is a nonsquare and $D$ is unipotent.  So $|R\cap A_{1}| = q^2+q-2$ and $|R\cap A_{21}| =|R\cap A_{22}| = (q-1)(q^2-1)/2$. The set $R\cap B_0$ is the matrix with $x=0$ and $D=-I_2$. The set $R\cap B_1$ consists of matrices with $x\neq 0$ and $D=-I_2$, so $|R\cap B_1|=q-1$. The set $R\cap B_2$ consists of matrices with $x= 0$ and $-D$ unipotent so $|R\cap B_2|=(q-1)(q^2-1)$. The set $R\cap B_{31}$ consists of matrices with $x\neq 0$ a square  and $-D$ unipotent, so $|R\cap B_{31}|=(q-1)(q^2-1)/2$. The set $R\cap B_{32}$ consists of matrices with $x$ a nonsquare  and $-D$ unipotent, so $|R\cap B_{32}|=(q-1)(q^2-1)/2$. For each $u\in \mathbb{F}_{q^2}^\times$ with $u^{q+1}=1$, $u\neq \pm1$, the pair $(u, u^{-1})$ determines conjugacy classes $G_0$ and $G_1$. We have $|R\cap G_1| = (q-1)|R\cap G_0|$, since $R\cap G_1$ and $R\cap G_0$ consists, respectively, of matrices with $x\neq 0$ and $x=0$ and with $D$ having eigenvalues $u^{\pm1}$. The character table then gives $\sum\limits_{r\in R\cap (B_0\cup B_1)}\trace\sigma(r)=\sum\limits_{r\in R\cap (B_2\cup B_{31}\cup B_{32})}\trace\sigma(r)=\sum\limits_{r\in R\cap (G_0\cup G_1)}\trace\sigma(r)=0$ and \begin{eqnarray}
\dim \sigma^R & = & \frac{1}{|R|}\sum\limits_{r\in R} \trace \sigma(r) \nonumber \\
& = & \frac{1}{|R|}\left( \sum\limits_{r\in R\cap A_0} \trace \sigma(r) +\sum\limits_{r\in R\cap A_1} \trace \sigma(r)  + \sum\limits_{r\in R\cap A_{21}} \trace \sigma(r)     + \sum\limits_{r\in R\cap A_{22}} \trace \sigma(r)       \right)\nonumber \\
& = & \frac{1}{|R|} \left( (q-1)^2(q^2+1) + (q^2+q-2)(q-1)^2 + \frac{1}{2}(q-1)(q^2-1)(1-q+1-3q)  \right) \nonumber \\
&=& 0. \nonumber
\end{eqnarray}

Next, since $M = Z(\GSp(4, \Fq))\times M'$ where $$M' = \left\{\begin{bmatrix}
1 & & \\
& D & \\
& & \det(D)
\end{bmatrix}  : D\in \GL(2, \Fq)\right\},$$ we have $$\dim\sigma^M = \frac{1}{|M|}\sum\limits_{m\in M} \trace \sigma(m)=\frac{1}{|M'|}\sum\limits_{m\in M'} \trace \sigma(m).$$ 

Suppose first that $q$ is even and $\sigma = \chi_4(k, l)$ . We have $|M'\cap A_1|=1$ and $|M'\cap A_2|=q^2-1$. 
The only additional contributions are from conjugacy classes $C_3(i)$, with $1\leq i\leq q/2$, in which $D$ is elliptic and $\det(D)=1$. From the character table, and the fact that there are $q^2-q$ elliptic elements in $\SL(2, \Fq)$ with a given eigenvalue pair, we have 
\begin{eqnarray}
\sum\limits_{i=1}^{q/2}\sum\limits_{m\in M'\cap C_3(i)} \trace\sigma(m) &=& -(q^2-q)(q-1) \sum\limits_{1\leq i\leq q/2} (\zeta_{q+1}^{ik}+\zeta_{q+1}^{-ik} + \zeta_{q+1}^{il} + \zeta_{q+1}^{-il}) \nonumber\\
&=& 2(q^2-q)(q-1) \nonumber
\end{eqnarray} and therefore\begin{eqnarray}
\dim \sigma^M &=& \frac{1}{(q+1)q(q-1)^2} \left( (q-1)^2(q^2+1) + (q^2-1)(q-1)^2 + 2(q^2-q)(q-1)  \right) \nonumber \\
&=& 2. \nonumber
\end{eqnarray}

If $q$ is odd and $\sigma = X_5(\Lambda, \omega)$, then the additional relevant families of conjugacy classes are $B_0, B_2$, and $G_0$.  We have $|M'\cap B_0| =1$ and $|M'\cap B_2| = q^2-1$. The elements of $M'$ in a class $G_0$ have $D\in \SL(2, \Fq)$ elliptic. If the eigenvalues of $D$ are $u^{\pm 1}$, then $\trace\sigma(m) = (1-q)(\omega(u) + \omega(u^{-1}) + \Lambda(u) \omega(u^{-1}) + \Lambda(u^{-1} ) \omega(u) ) $. Since $$\sum\limits_{u: \ u^{q+1}=1}\left(\omega(u) + \omega(u^{-1}) + \Lambda(u) \omega(u^{-1}) + \Lambda(u^{-1} ) \omega(u)  \right)=0$$ we have 
$$\sum\limits_{u\neq \pm1,\  u^{q+1}=1} \left(\omega(u) + \omega(u^{-1}) + \Lambda(u) \omega(u^{-1}) + \Lambda(u^{-1} ) \omega(u)  \right) \nonumber  = -4(1+\omega(-1)).$$ Therefore the total contribution from these elements to the sum, via the character table is 
$-4(1+\omega(-1))\cdot \frac{-1}{2}(q-1) (q^2-q) =2q(q-1)^2(1+\omega(-1)).$ Therefore \begin{eqnarray}
 \dim \sigma^M &=& \frac{(q-1)^2\left( (q^2+1)+ (q^2-1) + 2\omega(-1) - 2\omega(-1)(q+1) + 2q(1+\omega(-1))\right)  }{(q^2-1)(q^2-q)}  \nonumber \\
 &=& 2.\nonumber
\end{eqnarray}

This concludes the proof. \end{proof}

\begin{lemma}
If $S = \left\{  \begin{bsmallmatrix}
a  & & & \\
& b & & \\
* & & a & \\
* & * & & b 
\end{bsmallmatrix}\in \GSp(4, \Fq) \right\} $ then $\dim \sigma^S = q-1$.
\end{lemma}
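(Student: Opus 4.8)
The plan is a direct character-sum computation, in the spirit of the proof of Lemma \ref{Klingen Levi}. Since $\dim\sigma^S=\frac{1}{|S|}\sum_{s\in S}\trace\sigma(s)$, I would first describe $S$ and sort its elements into conjugacy classes of $\GSp(4,\Fq)$. Writing a general element of $S$ as $s(a,b,x,y)=\begin{bsmallmatrix}a&&&\\&b&&\\x&&a&\\y&(b/a)x&&b\end{bsmallmatrix}$, one checks that the symplectic condition forces exactly the relation that the $(4,2)$-entry equals $(b/a)$ times the $(3,1)$-entry and nothing more, so $S$ is parametrized by $(a,b,x,y)\in(\Fq^\times)^2\times\Fq^2$ and $|S|=q^2(q-1)^2$. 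The conjugacy class of $s(a,b,x,y)$ is then read off from $(a,b,x,y)$: when $a=b$ one gets $aI$ times a unipotent element whose nilpotent part $N$ (which automatically satisfies $N^2=0$) has rank $0$, $1$, or $2$ according as $(x,y)=(0,0)$, $x=0\ne y$, or $x\ne0$; when $a\ne b$ and $x=0$ the element is semisimple, conjugate to $\mathrm{diag}(a,b,a,b)$; and when $a\ne b$ and $x\ne0$ its Jordan form has one size-$2$ block at each of $a$ and $b$, so it is the semisimple element $\mathrm{diag}(a,b,a,b)$ times a regular unipotent in the $\GL_2$-factor of its centralizer.

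Next I would match these types with the named classes of \cite{En1972} (when $q$ is even) and \cite{Sh1982} (when $q$ is odd) and count $|S\cap C|$. Two points need care. The rank-$1$ unipotent elements of $S$ are symplectic transvections, and in $\GSp(4,\Fq)$ all transvections with a given eigenvalue part are conjugate (conjugation by a similitude of factor $\mu$ rescales the transvection parameter by $\mu^{-1}$), so they contribute a single class. For the rank-$2$ case, a short computation shows the symmetric form $B$ induced by $N$ on $V/\ker N$ has $\det B\equiv-1$ modulo squares, independently of $(x,y)$; hence when $q$ is odd $S$ meets exactly one of the two $[2,2]$-type unipotent classes, whereas when $q$ is even these elements split (according to whether $y=0$, i.e.\ whether $B$ is alternating) into the two Enomoto classes of that Jordan type. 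Using that $\sigma$ has trivial central character, so that $\sigma(aI)=\mathrm{id}$, the value $\trace\sigma(s(a,b,x,y))$ is $\dim\sigma$, $\trace\sigma(u_1)$, or $\trace\sigma(u_2)$ in the three cases with $a=b$ ($u_1$ the transvection, $u_2$ the $[2,2]$ unipotent), and in the two cases with $a\ne b$ depends only on $t:=a/b\in\Fq^\times\setminus\{1\}$. With $d_t:=\mathrm{diag}(t,1,t,1)$, collecting terms gives, in the odd case (the even case is entirely analogous, with the $[2,2]$ contribution split into two),
\begin{align*}
|S|\cdot\dim\sigma^S&=(q-1)\dim\sigma+(q-1)^2\trace\sigma(u_1)+q(q-1)^2\trace\sigma(u_2)\\
&\qquad+q(q-1)\sum_{t\ne1}\trace\sigma(d_t)+q(q-1)^2\sum_{t\ne1}\trace\sigma(d_tu_{\mathrm{reg},t}).
\end{align*}

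It then remains to substitute the character-table values for each of $\chi_5(k)$, $\chi_4(k,l)$, $X_4(\Theta)$, and $X_5(\Lambda,\omega)$ and evaluate. The two sums over $t\in\Fq^\times\setminus\{1\}$ collapse by the same kind of orthogonality cancellation used in Lemma \ref{Klingen Levi}: the relevant character values are built from the defining characters ($\Theta$, $\Lambda$, $\omega$ for the two odd families, roots of unity for $\chi_5$ and $\chi_4$) summed over $t$, which vanish up to an explicit boundary term. After collecting the remaining elementary contributions one checks that the right-hand side equals $q^2(q-1)^3$ in all four cases, so that $\dim\sigma^S=q-1$. The main obstacle is precisely this middle stage: correctly pinning down the Enomoto/Shinoda labels of the unipotent classes and of the mixed (semisimple-times-unipotent) classes met by $S$ in both parities---especially the treatment of the $[2,2]$-type classes and the square/nonsquare refinements in the odd case---and then verifying that the four resulting character sums telescope to the common value $q-1$; the surrounding steps (the parametrization of $S$ and the closing arithmetic) are routine.
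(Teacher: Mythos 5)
Your proposal is correct and follows essentially the same route as the paper: a direct evaluation of $\frac{1}{|S|}\sum_{s\in S}\trace\sigma(s)$ by sorting the elements of $S$ into conjugacy classes (with the same class assignments for the unipotent part: transvections form one class, and the rank-two unipotents lie in a single class for $q$ odd but split according to the second parameter for $q$ even) and substituting character-table values. The only cosmetic difference is that the paper disposes of the $a\neq b$ elements by asserting outright that the cuspidal characters contribute nothing there (after reducing to $\Sp(4,\Fq)$ when $q$ is even), whereas you retain those terms and cancel them by the orthogonality argument of Lemma \ref{Klingen Levi}.
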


\begin{proof}
We will compute $\dim \sigma^S = \frac{1}{|S|} \sum\limits_{s\in S}\trace\sigma(s)  $. 

Suppose first that $q$ is even. Since $\GSp(4, \Fq) = \Fq^\times \times \Sp(4, \Fq)$, we may replace $S$ by $S':=S\cap \Sp(4, \Fq)$ and assume $a=b^{-1}$. For both $\sigma = \chi_5(k)$ and $\sigma = \chi_4(k, l)$, the only contributions come from the unipotent elements in $S$ and the character table shows \begin{eqnarray}
\dim \chi_5(k)^S &=& \frac{1}{q^2(q-1)} \sum\limits_{x, z\in \Fq} \trace \sigma\begin{bsmallmatrix}
1 & & & \\ 
& 1 & & \\
x & & 1 & \\
z & x & & 1
\end{bsmallmatrix} \nonumber \\
& = & \frac{(q^2-1)^2-(q-1)(q^2-1)-(q-1)(q^2-1)+(q-1)^2 }{q^2(q-1)} \nonumber \\
& = & q-1\nonumber\\
\dim \chi_4(k,l)^S &=& \frac{1}{q^2(q-1)} \sum\limits_{x, z\in \Fq} \trace \sigma\begin{bsmallmatrix}
1 & & & \\ 
& 1 & & \\
x & & 1 & \\
z & x & & 1
\end{bsmallmatrix} \nonumber \\
& = & \frac{(q^2+1)(q-1)^2+(q-1)(q-1)^2+(q-1)(q-1)^2-(q-1)^2(2q-1) }{q^2(q-1)} \nonumber \\
& = & q-1\nonumber
\end{eqnarray} where the second equalities come from the fact that the conjugacy class of the matrix is $A_1$ iff $x=z=0$, is $A_2$ iff $z\neq x=0$, is $A_{31}$ iff $x\neq z=0$, and is $A_{32}$ iff $xz\neq 0$. 

Now suppose that $q$ is odd. For both $\sigma = X_4(\Theta)$ and $\sigma = X_5(\Lambda, \omega)$, the only contributions come from the scalar multiples of unipotent elements in $S$. In this case, one has the conjugacy class $A_0$ iff $x=z=0$, $A_1$ iff $z\neq x=0$, and $A_{21}$ iff $x\neq 0$. The character table shows \begin{eqnarray}
\dim X_4(\Theta)^S &=& \frac{1}{q^2(q-1)} \sum\limits_{x, z\in \Fq} \trace \sigma\begin{bsmallmatrix}
1 & & & \\ 
& 1 & & \\
x & & 1 & \\
z & x & & 1
\end{bsmallmatrix} \nonumber \\
& = & \frac{(q^2-1)^2-(q-1)(q^2-1)-q(q-1)(q-1)}{q^2(q-1)} \nonumber \\
& = & q-1\nonumber\\
\dim X_5(\Lambda, \omega)^S &=& \frac{1}{q^2(q-1)} \sum\limits_{x, z\in \Fq} \trace \sigma\begin{bsmallmatrix}
1 & & & \\ 
& 1 & & \\
x & & 1 & \\
z & x & & 1
\end{bsmallmatrix} \nonumber \\
& = & \frac{(q^2+1)(q-1)^2+(q-1)(q-1)^2-q(q-1)(q-1) }{q^2(q-1)} \nonumber \\
& = & q-1.\nonumber
\end{eqnarray} This concludes the proof.
\end{proof}

\begin{lemma}
Define the following subgroups of $\GSp(4, \Fq)$: \begin{eqnarray}
A&=& \left\{  \begin{bsmallmatrix}
a & & & \\
x & ad & & \\
& & a/d & \\
z & & -x/d & a
\end{bsmallmatrix}: a, d\in \Fq^\times, x, z\in \Fq\right\} \nonumber \\
B &=& \left\{\begin{bsmallmatrix}
a & & & \\
& b & & \\
& x & c/b & \\
& & & c/a
\end{bsmallmatrix} : a, b, c\in \Fq^\times, x\in \Fq\right\}\nonumber \\
C &=& \left\{\begin{bsmallmatrix}
a & & & \\
& b & & \\
& x & c/b & \\
y & & & c/a
\end{bsmallmatrix}:  a, b, c\in \Fq^\times, x, y\in \Fq\right\}\nonumber \\
D &=& \left\{\begin{bsmallmatrix}
a & a y & & \\
& b & & \\
& x & c/b & -cy/b \\
 & & & c/a
\end{bsmallmatrix}:  a, b, c\in \Fq^\times, x, y\in \Fq\right\}\nonumber 
\end{eqnarray}

Then $\dim \sigma^A = q-1$, $\dim \sigma^B = q+1$ and $\dim \sigma^C=\dim\sigma^D = 1$. 
\end{lemma}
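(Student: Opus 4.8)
The approach is the same as in the previous two lemmas: for each of the four subgroups $H \in \{A, B, C, D\}$ we compute $\dim \sigma^H = \frac{1}{|H|}\sum_{h\in H}\trace\sigma(h)$ by sorting the elements of $H$ into the conjugacy classes of $\GSp(4,\Fq)$ and reading the character values off the tables in \cite{En1972} and \cite{Sh1982}, handling the cases $q$ even and $q$ odd separately, and running the computation for each of the four generic cuspidal characters $\chi_5(k)$, $\chi_4(k,l)$, $X_4(\Theta)$, $X_5(\Lambda,\omega)$. In each case I would first identify $|H|$ (all four have order $(q-1)^2 q$ or $(q-1)^3 q$ depending on how many free scalars appear; note $A$, $C$, $D$ have two additive parameters and $B$ has one), then determine which conjugacy classes meet $H$ and with what multiplicity.

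\textbf{Step 1: $A$ and $B$.} Here I expect the answer $q-1$ for $A$ and $q+1$ for $B$. For $B$, note that $B$ is contained in a Borel subgroup and every element is conjugate to a diagonal or a ``diagonal times one root subgroup'' element; the semisimple parts run over a maximal torus while the unipotent contribution is a single $\SL_2$-type root subgroup, so the relevant classes are the split ones ($A_0/A_1$ families and split semisimple classes), and the character values are polynomials in $q$ that telescope. The factor $q+1$ (versus $q-1$ for $A$) should come from the fact that $B$'s torus part is ``larger'' relative to its unipotent part — i.e., $B$ has three free $\Fq^\times$ parameters but only one additive parameter, whereas $A$ balances two of each. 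The computation for $A$ is structurally like the subgroup $S$ of the preceding lemma (answer $q-1$): the non-semisimple contributions come only from unipotent or near-unipotent elements, and one gets the same cancellation pattern.

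\textbf{Step 2: $C$ and $D$.} Here the expected answer is $1$, which is the ``generic'' value one gets when $H$ is large enough to pin down the Whittaker line but no more. $C$ contains $B$ plus an extra additive parameter $y$ in the $(4,1)$-slot (a long-root unipotent), and $D$ is a conjugate-looking variant with a root subgroup in both the $(1,2)$ and $(3,4)$ positions. For these I would again split into the classes met; the key point is that the extra unipotent direction forces additional cancellation so that only the contribution tied to the trivial/unipotent-maximal part survives, giving $\dim\sigma^H = 1$ for all four $\sigma$. One should double-check that $C$ and $D$ are genuinely subgroups (closure under multiplication with the indicated parameters) — this is a quick matrix check using the $\GSp(4)$ relations, and the shapes are chosen precisely so that the products stay in the family.

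\textbf{Main obstacle.} The real work — and the place errors are most likely to creep in — is the bookkeeping of \emph{which} conjugacy classes each $H$ meets and with what cardinality, especially the split-vs-elliptic semisimple classes and the several unipotent families ($A_1, A_2, A_{31}, A_{32}$ when $q$ is even; $A_0, A_1, A_{21}, A_{22}, B_*, G_*$ when $q$ is odd), since the character values differ across these and the final cancellations are delicate. I would organize this exactly as in the proof of Lemma~\ref{Klingen Levi}: for each family of classes, pair up the ``$x=0$'' and ``$x\neq 0$'' (or square/nonsquare) sub-cases so that their combined character contribution visibly vanishes, leaving only a short sum over the split torus and the identity-type class to evaluate. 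Once the class intersections are tabulated, each of the eight resulting sums (four $\sigma$'s, and for $C,D$ also the two parities) is an elementary polynomial identity in $q$; as in the earlier lemmas I would simply assert ``the remainder of the computation is tedious but elementary.''
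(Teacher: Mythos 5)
Your plan is exactly the paper's approach: the paper's entire proof of this lemma is the remark that it is ``very similar to the computations of the previous Lemma,'' i.e.\ the same averaging $\dim\sigma^H=\frac{1}{|H|}\sum_{h\in H}\trace\sigma(h)$ over conjugacy classes via the Enomoto/Shinoda tables, split by parity of $q$ and by the four generic cuspidal characters. One small slip to fix when you execute it: the orders are $|A|=(q-1)^2q^2$ and $|C|=|D|=(q-1)^3q^2$ (two additive parameters contribute $q^2$, not $q$), though your parameter count in the parenthetical shows you already know this.
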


\begin{proof}
The proof of this is very similar to the computations of the previous Lemma.
\end{proof}




\begin{lemma} \label{Last character computation}
Suppose $R= \left\{ \begin{bsmallmatrix}
1 & & & \\
x & 1 & & \\
y & & 1 & \\
z & y & -x & 1
\end{bsmallmatrix}  \begin{bsmallmatrix}
1 & & & \\
 & 1 & & \\
 & x & 1 & \\
 &  &  & 1
\end{bsmallmatrix} :  x, y , z\in \Fq \right\}.$ 
Then $\dim \sigma^R = q-1$.
\end{lemma}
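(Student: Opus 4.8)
The plan is as follows. First I would multiply out the two matrix factors defining a general element of $R$, obtaining the single lower unitriangular matrix
\[
w(x,y,z) := \begin{bmatrix}
1 & & & \\
x & 1 & & \\
y & x & 1 & \\
z & y-x^2 & -x & 1
\end{bmatrix}.
\]
This shows at once that the parametrization of $R$ by $(x,y,z)\in\Fq^3$ is a bijection, so $|R| = q^3$; a short direct computation shows that $w(x,y,z)\,w(x',y',z')$ is again of this form, so $R$ is closed under multiplication and hence, being finite, is a subgroup of $\Sp(4,\Fq)$. Consequently $\dim\sigma^R = \frac{1}{q^3}\sum_{x,y,z\in\Fq}\trace\sigma\bigl(w(x,y,z)\bigr)$, and since every $w(x,y,z)$ is unipotent only the unipotent conjugacy classes of $\GSp(4,\Fq)$ contribute.

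Next I would sort the elements $w(x,y,z)$ into conjugacy classes. Writing $N = w(x,y,z) - I_4$, one computes $N^2 = x^2(E_{31}-xE_{41}-E_{42})$ and $N^3 = -x^3E_{41}$. Hence: if $x\neq 0$ then $w$ is regular unipotent; if $x=0$ and $y\neq 0$ then $N^2=0$ and $\mathrm{rank}\,N = 2$, so $w$ has Jordan type $[2,2]$; if $x=y=0$ and $z\neq 0$ then $w$ is the minimal unipotent; and if $x=y=z=0$ then $w=I_4$. Each of the two relevant Jordan types splits into two $\Sp(4,\Fq)$-classes, distinguished by a square-class invariant, so it remains to say in each range of $(x,y,z)$ which branch occurs. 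When $x=0$ the matrix $w(0,y,z)$ is exactly the one treated in the lemma on the subgroup $S$ above, which settles that range; and in the regular range $x\neq 0$, taking $e_1$ as a cyclic vector one finds the invariant is the square class of $-x$ (the $q$ even case being similar), so the branch depends only on the square class of $x$. This determines $|R\cap\mathcal{C}|$ for every conjugacy class $\mathcal{C}$ of $\GSp(4,\Fq)$.

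Finally I would read off the values of $\trace\sigma$ on these classes from the character tables of \cite{En1972} (for $q$ even) and \cite{Sh1982} (for $q$ odd) for each of the four families of generic cuspidal $\sigma$ with trivial central character, namely $\chi_5(k)$, $\chi_4(k,l)$, $X_4(\Theta)$ and $X_5(\Lambda,\omega)$, and evaluate $\frac{1}{q^3}\sum_{r\in R}\trace\sigma(r)$; in every case this comes out to $q-1$, just as in the analogous computations in the preceding lemmas. The only delicate step is the second one — correctly identifying which branch of each split unipotent class a given $w(x,y,z)$ lands in and counting the resulting pieces; once that bookkeeping is in place the remaining arithmetic is entirely routine, and if the split does not affect the relevant character values it can simply be ignored.
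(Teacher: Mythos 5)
Your proposal follows essentially the same route as the paper: both compute $|R|=q^3$, sort the unipotent elements of $R$ into conjugacy classes (identity, minimal, type $[2,2]$ for $x=0,\,y\neq0$, regular for $x\neq0$), handle the split classes exactly as you describe — the $x=0$ range via the earlier lemma on $S$, the regular range lumped together because the relevant character values coincide on both branches — and then sum the character-table values. The only differences are cosmetic: you determine the Jordan types systematically via $N^2$ and $N^3$ and verify closure of $R$ explicitly, where the paper asserts the class membership directly, and the final arithmetic you leave as routine is carried out in the paper and does yield $q-1$ in all four cases.
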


\begin{proof} Note $|R|=q^3$. Suppose first that $q$ is odd. Observe that $R\cap A_0$ consists of the identity element and $R\cap A_1$ consisting of matrices of the form $\begin{bsmallmatrix}
1 &  &  &  \\
& 1 &  &  \\
& & 1 &  \\
c & & & 1
\end{bsmallmatrix}$ with $c\neq 0$. Meanwhile $R\cap A_3$ consists of the $q^2(q-1)$ matrices of the form $\begin{bsmallmatrix}
1 &  &  &  \\
x & 1 &  &  \\
*& x & 1 &  \\
*&* & -x & 1
\end{bsmallmatrix}$ with $x\neq 0$; these are precisely the matrices in $R$ with a 1-dimensional eigenspace. Finally $R\cap A_{21}$ consists of the $q(q-1)$ matrices $\begin{bsmallmatrix}
1 &  &  &  \\
& 1 &  &  \\
b& & 1 &  \\
c& b & & 1
\end{bsmallmatrix}$ with $b\neq 0$. 
In particular, $|R\cap A_{22}|=0$. From the character table we have \begin{eqnarray}
\dim X_4(\Theta)^R 
&=& q^{-3} \left( (q^2-1)^2 -(q-1)(q^2-1)  - q(q-1)^2 + q^2(q-1)  \right) \nonumber \\
&=& q-1\nonumber \\
\dim X_5(\Lambda, \omega)^R &=& q^{-3} \left( (q^2+1)(q-1)^2 + (q-1)^3 - q(q-1)^2 + q^2(q-1)  \right) \nonumber \\
&=& q-1.\nonumber
\end{eqnarray}

Now suppose $q$ is even. In this case, we have $|R\cap A_1|=1$, $|R\cap A_2| = q-1$ (the ones with a 3-dimensional eigenspace), and note that $|R\cap A_{41}\cup A_{32}| = q^2(q-1)$, again consisting of the matrices with a one-dimensional eigenspace; we do not need to distinguish between the two conjugacy classes because the relevant characters take the same value on both. If $b, c\neq 0$ then $\begin{bsmallmatrix}
1 & &  &  \\
& 1 & &  \\
b & & 1 & \\
c & b & & 1
\end{bsmallmatrix} \in A_{32}$ (we can conjugate by the diagonal matrix $(b, b, 1, 1)$) to reduce to $b=1$ and then by $(c^{1/2}, c^{-1/2}, c^{1/2}, c^{-1/2})$ to make $c=1$). Thus $|R\cap A_{31}| = q-1$ and $|R\cap A_{32}| = (q-1)^2$. The character table shows that 
\begin{eqnarray}
\dim \chi_5(k)^R &=& q^{-3}\left( (q^2-1)^2 - (q-1)(q^2-1) - (q-1)(q^2-1) + (q-1)^2 + q^2(q-1)  \right) \nonumber \\
&=& q-1 \nonumber\\
\dim \chi_4(k,l)^R &=& q^{-3}\left( (q^2+1)(q-1)^2 + (q-1)^3 + (q-1)^3 - (q-1)^2(2q-1) + q^2(q-1)  \right)  \nonumber \\
&=& q-1. \nonumber
\end{eqnarray} This concludes the proof.
\end{proof}

\section{Main result}

Recall the notations for characters of $\GSp(4, \Fq)$ from \cite{En1972} and \cite{Sh1982} mentioned in the previous section.

\begin{theorem}\label{Main Theorem} Let $\pi=\cInd_{ZK}^G(\sigma)$ be a generic depth zero supercuspidal representation of $\GSp(4, F)$ with unramified central character. Let $n\geq1$. 


a) If $\sigma = \chi_5(k)$ or $\sigma=X_4(\Theta)$ then

$$\dim\pi^{\Kl{n}}=\frac{  ((q-1)c_n(q)+72)q^{\left\lfloor \frac{n-2}{4} \right\rfloor}  -72}{(q-1)^2}-\frac{18(n+2)}{q-1}-n(n+3)$$

where $$c_n(q)= \begin{cases}
q^2+36q+71  & \text{ if }n\in 4\Z   \\
    4q^2+50q+72 & \text{ if }n\in 1+4\Z \\
 11q+61  & \text{ if }n\in 2+4\Z\\
   22q+68  & \text{ if }n\in 3+4\Z
\end{cases}.$$ 








b) If $\sigma = \chi_4(k, l)$ or $\sigma=X_5(\Lambda, \omega)$ then $\dim\pi^{\Kl{n}}$ equals the above expression plus $2\lfloor \frac{n-1}{2}\rfloor$. 

In particular, $\dim \pi^{\Kl{n}}$ is a polynomial in $q$ of degree $\left\lfloor \frac{n}{4}\right\rfloor$ for all $n\geq 0$ and all $\sigma$.

\end{theorem}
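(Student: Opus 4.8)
The starting point is the identity $\dim\pi^{\Kl{n}}=\sum_{[g]\in\Supp(\pi)}\dim\sigma^{R_g}$ recorded at the beginning of Section~4. Sections~4.1--4.2 present $\Supp(\pi)$ as a disjoint union of finitely many families of double cosets: the four ``diagonal'' families $[t_{i,j}]$ of Table~1, the families $[t_{i,j}X_k]$, $[t_{i,j}Y_k]$, $[t_{i,j}Z_k]$, and the family $[S(\vec k)]$; and Section~4.3 computes $\dim\sigma^{R_g}$, which is constant on each family because the $\GSp(4,\Fq)$-conjugacy class of $R_g$ is. The plan is therefore to form, for each family, the product
\[
\bigl(\text{number of double cosets in the family}\bigr)\cdot\bigl(\text{value of }\dim\sigma^{R_g}\text{ on the family}\bigr),
\]
and to sum these over all families. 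The counts are supplied by Table~1, together with Lemmas~\ref{number with z<xy}, \ref{number with z=xy and 1+z/xy a unit}, and \ref{number with z=xy and 1+z/xy not a unit} for the three sub-families of $[S(\vec k)]$ (according to whether $\val(z)$ is less than, greater than, or equal to $\val(xy)$); the values $\dim\sigma^{R_g}$ are supplied by the character-table lemmas of Section~4.3.

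The reduction of part~b) to part~a) I would settle first and separately. Among all the subgroups $R_g$ that occur, the only one on which $\dim\sigma^{R_g}$ distinguishes $\{\chi_5(k),X_4(\Theta)\}$ from $\{\chi_4(k,l),X_5(\Lambda,\omega)\}$ is the Klingen Levi subgroup, which appears exactly for the family $[t_{i,0}]$; by Lemma~\ref{Klingen Levi}, $\dim\sigma^{R_g}$ there equals $0$ in the first case and $2$ in the second, while every other $\dim\sigma^{R_g}$ is insensitive to the choice (this is precisely why the computations of Section~4.3 could be carried out uniformly in $\sigma$). Since $[t_{i,0}]\in\Supp(\pi)$ exactly for $1\leq i\leq\lfloor\frac{n-1}{2}\rfloor$, part~b) follows from part~a) upon adding $2\lfloor\frac{n-1}{2}\rfloor$.

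For part~a) the remaining work is arithmetic. The diagonal family with $i,j\geq1$ and $2i+j\leq n-1$ contributes a cubic-in-$n$ count times $\dim\sigma^{R_g}=q+1$; the diagonal families with $i\leq0$ and with $2i+j\geq n$ contribute polynomial-in-$n$ counts times $\dim\sigma^{R_g}=1$; the family $[t_{i,0}]$ contributes $0$ (in case~a)); the families $[t_{i,j}X_k]$, $[t_{i,j}Y_k]$, $[t_{i,j}Z_k]$ contribute their Table~1 counts times the corresponding $\dim\sigma^{R_g}$ from Section~4.3; and each of the three pieces of $[S(\vec k)]$ contributes the count of the appropriate one of Lemmas~\ref{number with z<xy}--\ref{number with z=xy and 1+z/xy not a unit} times $\dim\sigma^{R_g}=q-1$ from Lemma~\ref{Last character computation}. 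In this last contribution the factor $q-1$ turns the $\tfrac{1}{(q-1)^2}$ and $\tfrac{1}{(q-1)^3}$ terms of those counts into the $\tfrac{1}{q-1}$ and $\tfrac{1}{(q-1)^2}$ terms of the stated formula, the three floor functions $\lfloor\tfrac{n-5}{4}\rfloor$, $\lfloor\tfrac{n-4}{4}\rfloor$, $\lfloor\tfrac{n-6}{4}\rfloor$ collapse, after renormalizing the power of $q$, into the single $q^{\lfloor(n-2)/4\rfloor}$, and the residue of $n$ modulo~$4$ that governs those nested floors is what produces the four-case shape of $c_n(q)$. Collecting and simplifying yields the displayed closed form.

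I expect this final collation to be the main obstacle: one must keep the several normalizations of $q$-powers and the various floor functions mutually consistent, and verify that the apparent simple pole at $q=1$ of $\frac{((q-1)c_n(q)+72)q^{\lfloor(n-2)/4\rfloor}-72}{(q-1)^2}$ is exactly cancelled by $-\frac{18(n+2)}{q-1}$ (equivalently, that the total is the nonnegative integer it is forced to be; concretely one checks $c_n(1)+72\lfloor\tfrac{n-2}{4}\rfloor=18(n+2)$ in each residue class). Granting the closed form, the ``in particular'' claim is immediate: $\dim\pi^{\Kl{n}}$ is then a Laurent polynomial in $q$ with no pole at $q=1$, hence a genuine polynomial, of degree $\lfloor\tfrac{n-2}{4}\rfloor+\deg_q c_n(q)-1$; since $\deg_q c_n(q)=2$ for $n\equiv0,1\pmod4$ and $\deg_q c_n(q)=1$ for $n\equiv2,3\pmod4$, a one-line check in each residue class gives degree $\lfloor n/4\rfloor$, and part~b) does not alter this because $2\lfloor\tfrac{n-1}{2}\rfloor$ is independent of $q$. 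The values $n=0,1$, where $\pi^{\Kl{n}}=0$, are consistent with $\lfloor n/4\rfloor=0$.
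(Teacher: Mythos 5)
Your proposal is correct and follows exactly the route the paper takes: the paper's own proof of Theorem~\ref{Main Theorem} is the one-line assertion that the formula ``follows from the previous sections after some routine algebra,'' and what you have written out --- summing (count of double cosets in each family of Table~1) times (the value of $\dim\sigma^{R_g}$ from the character-table lemmas), isolating the Klingen Levi family as the sole source of the $2\lfloor\frac{n-1}{2}\rfloor$ discrepancy in part~b), and verifying the cancellation $c_n(1)+72\lfloor\frac{n-2}{4}\rfloor=18(n+2)$ together with the degree count --- is precisely that routine algebra made explicit. No discrepancy with the paper's argument.
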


\begin{proof}
This now follows from the previous sections after some routine algebra. 
\end{proof}

\begin{corollary}
Let $n\geq 1$. If $q=2$ and $\sigma = \chi_5(k)$ then $$\dim\pi^{ \Kl{n}}=-(n+9)(n+12)+2^{\left\lfloor \frac{n-2}{4}\right\rfloor}   \begin{cases}
219  & \text{ if }n\in 4\Z   \\
    260 & \text{ if }n\in 1+4\Z \\
 155 & \text{ if }n\in 2+4\Z\\
  184 & \text{ if }n\in 3+4\Z
   
\end{cases}.$$ If $q=3$ and $\sigma =X_4(\Theta)$ then $$\dim\pi^{ \Kl{n}}=-(n+6)^2+3^{\left\lfloor \frac{n-2}{4}\right\rfloor}   \begin{cases}
112  & \text{ if }n\in 4\Z   \\
    147 & \text{ if }n\in 1+4\Z \\
 65 & \text{ if }n\in 2+4\Z\\
  85 & \text{ if }n\in 3+4\Z
   
\end{cases}.$$
\end{corollary}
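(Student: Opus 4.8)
The plan is to obtain the corollary as a direct specialization of Theorem~\ref{Main Theorem}(a). First I would record which representations are in play: for $q=2$ the residue field has even order, so Enomoto's character table \cite{En1972} applies and the generic cuspidal representations of case~(a) are the $\chi_5(k)$; for $q=3$ the order is odd, so Shinoda's table \cite{Sh1982} applies and case~(a) corresponds to the $X_4(\Theta)$. In both situations $\pi=\cInd_{ZK}^G(\sigma)$ is a generic depth zero supercuspidal with unramified central character (for $q=2$ the centre of $\GSp(4,\mathbb{F}_2)$ is trivial, so this condition is automatic), so Theorem~\ref{Main Theorem}(a) applies and it remains to substitute $q=2$, resp.\ $q=3$, into its formula and simplify.

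For $q=2$ we have $q-1=1$, so $(q-1)^2=1$ and the rational part of the formula collapses entirely; the formula becomes $(c_n(2)+72)\,2^{\lfloor(n-2)/4\rfloor}-72-18(n+2)-n(n+3)$. I would then combine the $n$-polynomial terms, getting $-72-18(n+2)-n(n+3)=-(n^2+21n+108)=-(n+9)(n+12)$, exactly the polynomial in the corollary. Evaluating $c_n(2)$ in the four residue classes $n\bmod 4$ and adding $72$ produces the four constants $219,260,155,184$; so this case is done.

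For $q=3$ we have $q-1=2$ and $(q-1)^2=4$, and the formula reads $\tfrac14\bigl((2c_n(3)+72)\,3^{\lfloor(n-2)/4\rfloor}-72\bigr)-9(n+2)-n(n+3)$. The only step that wants a moment's thought is matching the integer presentation of the answer: one checks $c_n(3)$ is even in each residue class, so $2c_n(3)+72$ is divisible by $4$ and the coefficient of the power of $3$ is the integer $(c_n(3)+36)/2$, equal to $112,147,65,85$ for $n\equiv0,1,2,3\pmod 4$. The remaining terms collapse as $-18-9(n+2)-n(n+3)=-(n^2+12n+36)=-(n+6)^2$, which is the polynomial in the corollary. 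This finishes the argument.

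Two remarks I would include rather than dwell on. For small $n$ the exponent $\lfloor(n-2)/4\rfloor$ is negative — for instance $n=1$ gives $2^{-1}$, resp.\ $3^{-1}$ — but the fractional contribution is cancelled exactly by the polynomial part, yielding $\dim\pi^{\Kl{1}}=0$ in agreement with the absence of Iwahori-fixed vectors noted at the end of the Setup section; the formula is therefore valid as stated for all $n\ge1$. And the nonemptiness of the families $\chi_5(k)$ for $q=2$ and $X_4(\Theta)$ for $q=3$ is visible from the parametrizations in \cite{En1972,Sh1982}. There is no genuine obstacle here: the whole proof is the elementary arithmetic of the two middle paragraphs, the mildly delicate point being only the divisibility-by-$4$ bookkeeping for $q=3$.
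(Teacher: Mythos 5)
Your proposal is correct and matches the paper's (implicit) argument: the corollary is just Theorem~\ref{Main Theorem}(a) specialized to $q=2$ with $\sigma=\chi_5(k)$ and $q=3$ with $\sigma=X_4(\Theta)$, and your arithmetic — the collapse of the polynomial part to $-(n+9)(n+12)$, resp.\ $-(n+6)^2$, and the constants $219,260,155,184$ and $112,147,65,85$ from $c_n(2)+72$ and $(c_n(3)+36)/2$ — checks out, including the divisibility by $4$ in the $q=3$ case. The observation that the negative exponent at $n=1$ still yields $\dim\pi^{\Kl{1}}=0$ is a nice sanity check consistent with the paper.
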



\begin{remark}\label{L packet remark}
Using our character table computations, together with the explicit determination of $L$-packets done in \cite{L2010}, the condition $\dim\sigma^M=0$ (where $M$ is the Klingen Levi subgroup) is equivalent to $\sigma$ being $\chi_5(k)$ or $X_4(\Theta)$, and is also equivalent to the $L$-packet of $\pi$ being a singleton, and to the $L$-parameter of $\pi$ being irreducible as a 4-dimensional representation of the Weil-Deligne group. 
\end{remark}

 \begin{landscape}
\begin{table}[h!] 
  \begin{center}
    \caption{$\Kl{n}$ fixed vector information, $n\geq 1$}
    \label{tab:table1}
    \begin{tabular}{c|c|c|c|c|c|c|c} 
   $g$ &   $i$ & $j$  & $k$ &
      $R_g$ & $\dim \sigma^{R_g}$ &  $|\{ g \}|$\\
      \hline

       \   $t_{i,j}$ &  $[2-n, 0]$ & $[1-2i, n-i-1 ]$ & & $\begin{bsmallmatrix}
                  * & * & & \\  & * &  & \\ & *  & * &  *\\ & &  & * 
                  \end{bsmallmatrix}$ & 1 &  $\frac{n(n-1)}{2}$ \\
                  
  \   $t_{i,j}$ & $[1, n-2 ]$ & $[ \max(1,n-2i) , n-i-1  ]$ & & $\begin{bsmallmatrix}
     * & & & \\  & * &  & \\ & * & * & \\ *  & &  & * 
      \end{bsmallmatrix}$ & 1 & $\left\lfloor \frac{(n-1)^2}{4} \right\rfloor$ \\
      
   \    $t_{i,j}$ &  $[1, \lfloor \frac{n-1}{2} \rfloor ]$ & 0  & & $\begin{bsmallmatrix}
       * &  & & \\  & *  & *  & \\ & * & * &  \\ & &  & * 
       \end{bsmallmatrix}$ &  0 \text{ or } 2  & $\left\lfloor \frac{n-1}{2} \right\rfloor$ \\

   \    $t_{i,j}$ & $[1, \lfloor\frac{n-2}{2} \rfloor]$ & $[1, n-2i-1]$ & & $\begin{bsmallmatrix}
      * & & & \\  & * &  & \\ & * & * & \\  & &  & * 
      \end{bsmallmatrix}$ & $q+1$  & $\left\lfloor \frac{(n-2)^2}{4} \right\rfloor$  \\

  \   $t_{i,j}Z_{i+j+k}$ & $[2, n-3]$  &  $[1, n-i-2]$ &  $[1, \min(i, n-i-j )-1]$  & $\begin{bsmallmatrix}
                     a & & & \\  & ad &  & \\ & * & \frac{a}{d} & \\ *  & &  & a 
                     \end{bsmallmatrix}$  &  $q-1$  & $\left\lfloor \frac{(n-1)(n-3)(2n-7)}{24} \right\rfloor $   \\
                     
  \  $t_{i,j}X_k$  & $[2, n-3]$ & $[1, n-i-2]$ &  $[1, \min(i, n-i-j)-1]$       & $\begin{bsmallmatrix}
                 a & & & \\ *  & a &  & \\   & & d & \\ * &  & *  & d 
                 \end{bsmallmatrix}$ & $q-1$   & $\left\lfloor \frac{(n-1)(n-3)(2n-7)}{24} \right\rfloor$ \\
       
 \   $t_{i,j}Y_k$ & $[1,   \lfloor\frac{n-4}{2} \rfloor]$ & $[3, n-2i-1]$  &  $[i+\lceil \frac{j+1}{2} \rceil , i+j-1]$   & $\begin{bsmallmatrix}
                    a & & & \\  & d  &  & \\ *  & *   & a  & \\ & *  &  & d 
                    \end{bsmallmatrix}$  & $q-1$  & $\frac{n-3}{6} \left\lfloor \frac{n^2-6n+8}{4} \right\rfloor$   \\

    \   $S(\vec{k})$ & - & -  &  -   & $\begin{bsmallmatrix}
                        a & & & \\ v & a  &  & \\ *  & v   & a  & \\ * & *  &  -v & a 
                        \end{bsmallmatrix}$  & $q-1$  & \ref{number with z<xy}, \ref{number with z=xy and 1+z/xy a unit}, \ref{number with z=xy and 1+z/xy not a unit}

    \end{tabular}
  \end{center}
\end{table}

\end{landscape}


\begin{thebibliography}{widest entry}
\bibitem[En1972]{En1972} Enomoto, Hikoe. ``The characters of the finite symplectic group $Sp (4, q), q= 2^f$." Osaka J. Math 9.1 (1972): 75-94.
 \bibitem[Iw1965]{Iw1965} Iwahori, Nagayoshi, and Hideya Matsumoto. ``On some Bruhat decomposition and the structure of the Hecke rings of $ p $-adic Chevalley groups." Publications Mathématiques de l'IHÉS 25 (1965): 5-48.
\bibitem[J2022]{J2022} Johnson-Leung, Jennifer, Brooks Roberts, and Ralf Schmidt. ``Stable Klingen Vectors and Paramodular Newforms." arXiv preprint arXiv:2208.08939 (2022).
 \bibitem[L2010]{L2010} Lust, Jaime. Verifying depth-zero supercuspidal L-packets for inner forms of $GSp(4)$. Diss. UC San Diego, 2010.
 \bibitem[Ro2007]{Ro2007} Roberts, Brooks, and Ralf Schmidt. Local newforms for $GSp (4)$. Vol. 1918. Springer Science and Business Media, 2007.
 \bibitem[RSY2022]{RSY2022} Roy, Manami, Ralf Schmidt, and Shaoyun Yi. ``Dimension formulas for Siegel modular forms of level 4." Mathematika 69.3 (2023): 795-840.
 \bibitem[Sh1982]{Sh1982} Shinoda, Ken-ichi. ``The characters of the finite conformal symplectic group, $CSp (4, q)$." Communications in Algebra 10.13 (1982): 1369-1419.
 \bibitem[Yi2021]{Yi2021} Yi, Shaoyun. ``Klingen $\mathfrak{p}^2$ vectors for GSp(4)." The Ramanujan Journal 54, no. 3 (2021): 511-554.
\end{thebibliography}
\end{document}